\newcommand*\circled[1]{\tikz[baseline=(char.base)]{
            \node[shape=circle,draw,inner sep=2pt] (char) {#1};}}
\theoremstyle{plain}
\newtheorem{thm}{Theorem}[section]
\newtheorem{lem}[thm]{Lemma}
\newtheorem{cor}[thm]{Corollary}
\newtheorem{prop}[thm]{Proposition}
\theoremstyle{definition}
\newtheorem{rmk}[thm]{Remark}
\numberwithin{equation}{section}
\def\al{\alpha}
\def\be{\beta}
\def\de{\delta}
\def\la{\lambda}
\def\R{\mathbb{R}}
\def\C{\mathbb{C}}
\def\N{\mathbb{N}}
\def\Uq{\mathcal{U}_q(\mathfrak{su}(3))}
\def\pfq#1#2#3#4#5#6{%
	{}_{#1}\phi_{#2}\biggl(\genfrac..{0pt}{}{#3}{#4}\biggr|#5,#6\biggr)%
}
\newcommand{\SU}{\mathrm{SU}}
\newcommand{\diag}{\mathrm{diag}}
\def\tensor{\otimes}
\newcommand{\tr}{\mathrm{tr}}
\title[Branching rules]
{Branching rules for finite-dimensional $\Uq$-representations
with respect to a right coideal subalgebra}
\author{Noud Aldenhoven}
\author{Erik Koelink}
\address{IMAPP, Radboud Universiteit,
PO Box 9010, 6500 GL Nijmegen,
the Netherlands}
\email{n.aldenhoven@math.ru.nl, e.koelink@math.ru.nl}
\author{Pablo Rom\'an}
\address{CIEM,
FaMAF, Universidad Nacional de C\'ordoba, Medina Allende s/n Ciudad
Universitaria, C\'ordoba, Argentina}
\email{roman@famaf.unc.edu.ar}
\date{\today}
\begin{document}

\begin{abstract}
We consider the quantum symmetric pair $(\Uq, \mathcal{B})$ where $\mathcal{B}$ is
a right coideal subalgebra. We prove that all finite-dimensional irreducible
representations of $\mathcal{B}$ are weight representations and are characterised
by their highest weight and dimension.

We show that the restriction of a finite-dimensional irreducible
representation of $\Uq$ to $\mathcal{B}$ decomposes multiplicity free into
irreducible representations of $\mathcal{B}$. Furthermore we give explicit expressions
for the highest weight vectors in this decomposition in terms of dual $q$-Krawtchouk polynomials.
\end{abstract}

\maketitle

\section{Introduction}\label{sec:intro}

The theory of quantum symmetric pairs of Lie groups has been developed by Koornwinder, Dijkhuizen, Noumi and Sugitani and others \cite{DN98, DS99, Nou96, NDS97, NS95, Sug99} for classical Lie groups and by G. Letzter \cite{Let99, Let02, Let03, Let04,  Let08} for all semisimple Lie algebras, see also \cite{Kol14}.
The motivating example for the development for this theory was given by Koornwinder \cite{Koo93}, who studied scalar-valued spherical functions on the quantum analogue of $(\SU(2), \textrm{U}(1))$ considering twisted primitive elements in the quantised universal enveloping algebra of $\mathcal{U}_q(\mathfrak{sl}(2))$.
Koornwinder identified all scalar-valued spherical functions with Askey-Wilson polynomials in two free parameters. Dijkhuizen and Noumi \cite{DN98} extended the work of Koornwinder  to quantum analogues of $(\SU(n+1), \textrm{U}(n))$ considering two sided coideals of the quantised universal enveloping algebra of $\mathcal{U}_q(\mathfrak{gl}({n+1}))$. More generally, Letzter  considered the quantised universal enveloping algrebra $\mathcal{U}_q(\mathfrak{g})$ with a right coideal subalgebra $\mathcal{B}$, which is the quantum analogue of $\mathcal{U}(\mathfrak{k})$ for a Cartan decomposition $\mathfrak{g}=\mathfrak{k}\oplus\mathfrak{p}$. In \cite{Let04} all scalar-valued spherical functions for quantum symmetric pairs with reduced restricted root systems are identified with Macdonald polynomials. However, the requirement of having a reduced restricted root system excludes the quantum analogue of $(\SU(3), \mathrm{U}(2))$.

One recent extension of this situation \cite{AKR15}  arises with the study of matrix-valued spherical functions of the quantum analogue of $(\SU(2) \times \SU(2), \diag)$ where higher-dimensional representations of coideal subalgebra $\mathcal{B}$ are involved. The quantum symmetric pair is given by the quantised universal enveloping algebra of  $\mathcal{U}_q(\mathfrak{g})$, where $\mathfrak{g} = \mathfrak{su}(2) \oplus \mathfrak{su}(2)$, and a right coideal subalgebra $\mathcal{B}$ than can be identified with $\mathcal{U}_q(\mathfrak{su}(2))$. As in the Lie group setting \cite{KvPR12, KvPR13,vPrui12,HvP13}, the explicit knowledge of the branching rules plays a fundamental role in the explicit determination of the matrix-valued spherical functions. In this first case, the branching rules for the irreducible representations of $\mathcal{U}_q(\mathfrak{g})$ with respect to $\mathcal{B}$ follow using the standard Clebsch-Gordan decomposition.

One of the first technical difficulties that one finds to extend the results of \cite{AKR15} to more general quantum symmetric pairs
is the lack of the explicit branching rules for finite-dimensional $\mathcal{U}_q(\mathfrak{g})$-representations with respect to a right coideal subalgebra.
In this paper we deal with this problem for the quantised universal enveloping algebra $\Uq$ with a right coideal subalgebra
$\mathcal{B}$ as in Kolb \cite{Kol14}.  We study the problem of describing all irreducible representations that occur
in the restriction to $\mathcal{B}$ of finite-dimensional irreducible representations of $\Uq$. In general, information about branching rules for quantum symmetric pairs $(\mathcal{U}_q(\mathfrak{g}),\mathcal{B})$ as in Kolb \cite{Kol14} and Letzter \cite{Let99,Let02} is relatively scarce in particular in case the coideal subalgebra depends on an additional parameter as in this paper. However see Oblomkov and Stokman \cite{OS} for partial information on the branching rules for the quantum analogue of $(\mathfrak{gl}(2n),\mathfrak{gl}(n)\oplus \mathfrak{gl}(n))$.

This paper is organised as follows.
In Section \ref{sec:uqsl3} we review the construction of the quantised universal enveloping algebra $\Uq$ and its finite
dimensional irreducible representations. Then we collect a series of commutation identities for the generators of $\Uq$ and we introduce a an orthogonal basis for finite-dimensional $\Uq$-representations which is an analogue of Mudrov \cite{Mudr}. We also describe the action of the generators of $\Uq$ on this basis. In Section \ref{sec:coideal-subalgebra} we fix a right coideal subalgebra $\mathcal{B}$ of the quantised universal enveloping algebra which depends on two complex parameters $c_1, c_2$. We describe the generators of the Cartan subalgebra of $\mathcal{B}$ and we use them to classify all finite-dimensional irreducible representations of $\mathcal{B}$ under a mild genericity condition on the parameters. More precisely we prove that every finite-dimensional irreducible representation of $\mathcal{B}$ is completely characterised by its highest weight and its dimension. In Section \ref{sec:branching-rule} we prove the main theorem of the paper.  We show that any irreducible finite-dimensional representation of $\Uq$ decomposes multiplicity free into irreducible representations of the $\mathcal{B}$ and we characterise the representations that occur in the decomposition by their highest weight and dimension. The highest weight vectors of the coideal subalgebra $\mathcal{B}$-representations are obtained by diagonalising an element of the Cartan subalgebra of $\mathcal{B}$ restricted to a certain subspace where it acts tridiagonally. The eigenvectors can be then identified explicitly in terms of dual $q$-Krawtchouk polynomials.

\section{The quantised universal enveloping algebra $\Uq$}
\label{sec:uqsl3}
Let $\mathfrak{g}=\mathfrak{sl}(3)=\{X\in\mathfrak{gl}(3,\C): \, \tr(X)=0\}$. We fix the Cartan subalgebra $\mathfrak{h}$ of diagonal matrices. Let $A = (a_{i,j})_{i,j}$ be the Cartan matrix for $\mathfrak{g}$, i.e. $a_{i,i}=2$, $i=1,2$, and $a_{i,j}$=-1 for $i\neq j$.
Let $R\subset\mathfrak{h}$ denote the root system of $\mathfrak{g}$. We denote by $R^+$ the subset of positive roots, so that we have the decomposition $\mathfrak{g}=\mathfrak{n}^-\oplus \mathfrak{h}\oplus\mathfrak{n}^+$. We denote by $(\cdot,\cdot)$ the canonical inner product on $\mathfrak{h}$ and by $\Pi=\{\alpha_1,\alpha_2\}$ the simple roots so that $(\alpha_i, \alpha_j) = a_{i,j}$.
The fundamental weights are given by $\varpi_1 = \frac{2}{3}\alpha_1 + \frac{1}{3}\alpha_2$ and $\varpi_2 = \frac{1}{3}\alpha_1 + \frac{2}{3}\alpha_2$.

The quantised universal enveloping algebra $\Uq$ is the unital associative algebra generated by $E_i$, $F_i$ and $K_i^{\pm 1}$, where $i = 1, 2$, subject to the relations
\begin{equation}
\label{eq:uqsl3-relations}
\begin{split}
K_i^{\pm 1} K_j^{\pm 1} &= K_{j}^{\pm 1} K_{i}^{\pm 1}, \quad K_i^{\pm 1} K_j^{\mp 1} = K_{j}^{\mp 1} K_{i}^{\pm 1}, \quad K_i K_i^{-1} = 1 = K_i^{-1} K_i, \\
K_i E_j &= q^{(\alpha_i, \alpha_j)} E_j K_i, \quad K_i F_j = q^{-(\alpha_i, \alpha_j)} F_j K_i, \quad \left[E_i, F_j\right] = \frac{K_i - K_i^{-1}}{q - q^{-1}} \delta_{i,j},
\end{split}
\end{equation}
for $i, j = 1, 2$ and, for $i \neq j$, the quantum Serre's relations
\begin{equation}
\label{eq:qSerre}
E_i^2 E_j - (q + q^{-1}) E_i E_j E_i + E_j E_i^2 = 0 = F_i^2 F_j - (q + q^{-1}) F_i F_j F_i + F_j F_i^2.
\end{equation}
We assume that $q\in[0,1]$. The quantised universal enveloping algebra $\Uq$ has a Hopf algebra structure with comultiplication $\Delta$, counit $\epsilon$ and antipode $S$ defined by
\begin{equation*}
\begin{split}
\Delta &: E_i, F_i, K_i^{\pm 1} \mapsto E_i \tensor 1 + K_i \tensor E_i, F_i \tensor K_i^{-1} + 1 \tensor F_i, K_i^{\pm 1} \tensor K_i^{\pm 1}, \\
\epsilon &: E_i, F_i, K_i^{\pm 1} \mapsto 0, 0, 1, \quad S : E_i, F_i, K_i^{\pm 1} \mapsto -K_i^{-1} E_i, -F_i K_i, K_i^{\mp 1},
\end{split}
\end{equation*}
with $i = 1, 2$. The $\ast$-structure on $\Uq$ is given by
\begin{equation}\label{eq:uqsl3-star}
E_i^\ast = K_i F_i, \quad F_i^\ast = E_i K_i^{-1}, \quad (K_i^{\pm 1})^\ast = K_i^{\pm 1}, \quad i = 1, 2,
\end{equation}
so that $\Uq$ is a Hopf $*$-algebra. Following Mudrov \cite{Mudr} we define for $a\in \R$
\begin{equation*}
\begin{split}
& F_3 = [F_1,F_2]_q = F_1F_2-qF_2F_1, \quad E_3 = [E_2,E_1]_q = E_2E_1-qE_1E_2, \\
& \hat{F}_3[a] = F_1F_2 \left(\frac{q^{a+1}K_2-q^{-a-1}K_2^{-1}}{q-q^{-1}} \right)
- F_2F_1 \left(\frac{q^{a}K_2-q^{-a}K_2^{-1}}{q-q^{-1}} \right), \\
& \hat{E}_3[a] = \left(\frac{q^{a+1}K_2-q^{-a-1}K_2^{-1}}{q-q^{-1}} \right) E_2E_1
-  \left(\frac{q^{a}K_2-q^{-a}K_2^{-1}}{q-q^{-1}} \right)E_1E_2, \\
\end{split}
\end{equation*}
and $\hat{F}_3=\hat{F}_3[0]$, $\hat{E}_3=\hat{E}_3[0]$.

\begin{lem}\label{lem:relinUq}
The following relations hold in $\Uq$:
\begin{enumerate}[(i)]
\setlength\itemsep{0.3em}
\item\label{lem:relinUq-i} $F_1\hat{F}_3[a] = \hat{F}_3[a]F_1,$
\item\label{lem:relinUq-iv} $E_2\hat{F}_3[a] = \hat{F}_3[a-2]E_2 - \frac{(q^{a}-q^{-a})}{(q-q^{-1})}F_1,$
\item\label{lem:relinUq-v} $K_i \hat{F}_3[a] = q^{-1} \hat{F}_3[a] K_i$, $K_i \hat{E}_3[a] = q \hat{E}_3[a] K_i$, $i=1,2.$
\end{enumerate}
\end{lem}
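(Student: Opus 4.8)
The plan is to treat all three identities uniformly by writing the defining expression in the factored form
\[
\hat{F}_3[a] = \frac{1}{q-q^{-1}}\bigl(X\,K_2 - Y\,K_2^{-1}\bigr),\qquad X = q^{a+1}F_1F_2 - q^a F_2F_1,\quad Y = q^{-a-1}F_1F_2 - q^{-a}F_2F_1,
\]
and then moving a single generator through this expression using only the defining relations \eqref{eq:uqsl3-relations} and the quantum Serre relations \eqref{eq:qSerre}. The recurring elementary moves are $K_2F_1 = qF_1K_2$ and $K_2^{-1}F_1 = q^{-1}F_1K_2^{-1}$ (coming from $(\alpha_2,\alpha_1)=-1$), together with $E_2K_2 = q^{-2}K_2E_2$ and $E_2K_2^{-1}=q^2K_2^{-1}E_2$ (coming from $(\alpha_2,\alpha_2)=2$).

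For the two relations involving $K_i$ I would observe that every monomial occurring in $\hat F_3[a]$, namely $F_1F_2K_2^{\pm1}$ and $F_2F_1K_2^{\pm1}$, has the same total weight, so conjugating by $K_i$ produces a uniform scalar. Concretely $K_i$ commutes with $K_2^{\pm1}$, while $K_i(F_1F_2) = q^{-(\alpha_i,\alpha_1)-(\alpha_i,\alpha_2)}(F_1F_2)K_i$, and for both $i=1,2$ the exponent equals $-1$; the same holds for $F_2F_1$. Hence $K_i\hat F_3[a] = q^{-1}\hat F_3[a]K_i$, and the statement for $\hat E_3[a]$ is entirely analogous, the relevant exponent now being $+1$.

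For the identity $F_1\hat F_3[a]=\hat F_3[a]F_1$ the factored form reduces the claim to the two rules $F_1X = qXF_1$ and $F_1Y = q^{-1}YF_1$: granting these, the scalars $q^{\pm1}$ they produce are exactly cancelled when $K_2F_1 = qF_1K_2$ and $K_2^{-1}F_1 = q^{-1}F_1K_2^{-1}$ are used to pass $F_1$ back to the right. Each of the two rules is the quantum Serre relation in disguise: expanding $F_1X - qXF_1$ collects into $q^{a+1}\bigl(F_1^2F_2 + F_2F_1^2 - (q+q^{-1})F_1F_2F_1\bigr)$, which vanishes by \eqref{eq:qSerre}, and the same computation with the opposite sign handles $Y$.

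The substantive case is the identity $E_2\hat F_3[a]=\hat F_3[a-2]E_2 - \frac{q^a-q^{-a}}{q-q^{-1}}F_1$, and the main obstacle is the bookkeeping needed to reassemble the commuted terms into the parameter-shifted expression $\hat F_3[a-2]$. I would first push $E_2$ through $X$ and $Y$ using $[E_2,F_1]=0$ and $[E_2,F_2]=\frac{K_2-K_2^{-1}}{q-q^{-1}}$. After moving the resulting Cartan factor out past $F_1$, the residual terms collapse, via the identity $1-q^2=-q(q-q^{-1})$, to the clean commutators $E_2X = XE_2 - q^aF_1K_2^{-1}$ and $E_2Y = YE_2 - q^{-a}F_1K_2$. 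Substituting these into $E_2\hat F_3[a]$, the two residual $F_1K_2^{\mp1}$ terms meet the outer $K_2^{\pm1}$ and combine into $-\frac{q^a-q^{-a}}{q-q^{-1}}F_1$, while the main terms $XE_2K_2$ and $YE_2K_2^{-1}$ are rewritten by commuting $E_2$ fully to the right, which rescales $K_2\mapsto q^{-2}K_2$ and $K_2^{-1}\mapsto q^2K_2^{-1}$. The final check is that $\frac{1}{q-q^{-1}}(q^{-2}XK_2 - q^2YK_2^{-1})$ equals $\hat F_3[a-2]$, which one verifies by matching coefficients, since $q^{-2}X$ and $q^2Y$ are precisely $X$ and $Y$ with $a$ replaced by $a-2$.
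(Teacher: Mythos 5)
Your proposal is correct and matches the paper's approach: the paper dismisses this lemma as a ``straightforward verification'' from the defining relations, and your computation is exactly that verification carried out in detail (all three parts check, including the commutators $E_2X=XE_2-q^aF_1K_2^{-1}$, $E_2Y=YE_2-q^{-a}F_1K_2$, and the observation that $q^{-2}X$ and $q^2Y$ realise the shift $a\mapsto a-2$). You correctly note that part (i) genuinely requires the quantum Serre relation \eqref{eq:qSerre}, which the paper's one-line proof omits from its citation list.
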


\begin{proof}
Straightforward verifications using \eqref{eq:uqsl3-relations} and  \eqref{eq:uqsl3-star}.
\end{proof}

\begin{lem}
\label{lem:EkFkmodUqEk} For $i=1,2$:
\begin{enumerate}[(i)]
\item \label{lem:EkFkmodUqEk-iii}
\[
E_i F_i^k =  F_i^k E_i + \frac{q^k-q^{-k}}{q-q^{-1}} F_i^{k-1}
\frac{q^{1-k}K_i-q^{k-1}K_i^{-1}}{q-q^{-1}}
\]
\item \label{lem:EkFkmodUqEk-ii}
\begin{equation*}
\begin{split}
E_i^{k}F_i^k &=
\frac{q^k\, (q^2;q^2)_k}{(1-q^2)^{2k}} (q^{2-2k} K_i^2;q^2)_k K_i^{-k} + \Uq \, E_i  \\
& =
\frac{(q^2;q^2)_k}{(1-q^2)^{2k}} (-1)^k q^{-k(k-2)} (K_i^{-2};q^2)_k K_i^{k} + \Uq \, E_i.
\end{split}
\end{equation*}
\end{enumerate}
\end{lem}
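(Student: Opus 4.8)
The plan is to work inside the single copy of $\mathcal{U}_q(\mathfrak{sl}(2))$ generated by $E_i,F_i,K_i$, using only $(\alpha_i,\alpha_i)=2$, so that $K_iE_i=q^2E_iK_i$, $K_iF_i=q^{-2}F_iK_i$ and $[E_i,F_i]=(K_i-K_i^{-1})/(q-q^{-1})$. For part (\ref{lem:EkFkmodUqEk-iii}) I would induct on $k$. The case $k=1$ is exactly the defining commutator. For the inductive step I would write $E_iF_i^{k+1}=(E_iF_i^k)F_i$, substitute the formula for $E_iF_i^k$, and commute the Cartan factor $\frac{q^{1-k}K_i-q^{k-1}K_i^{-1}}{q-q^{-1}}$ past the final $F_i$ using $K_i^{\pm1}F_i=q^{\mp2}F_iK_i^{\pm1}$. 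Combining the two resulting Cartan terms with their coefficients $\frac{q^k-q^{-k}}{q-q^{-1}}$ and $1$ produces the coefficient $\frac{q^{k+1}-q^{-k-1}}{q-q^{-1}}$ together with the shifted Cartan factor for $k+1$; this is a routine $q$-number identity.

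For part (\ref{lem:EkFkmodUqEk-ii}) I would work modulo the left ideal $\Uq E_i$. Applying (\ref{lem:EkFkmodUqEk-iii}) once gives $E_i^kF_i^k=E_i^{k-1}F_i^kE_i+\frac{q^k-q^{-k}}{q-q^{-1}}E_i^{k-1}F_i^{k-1}\frac{q^{1-k}K_i-q^{k-1}K_i^{-1}}{q-q^{-1}}$, and the first summand lies in $\Uq E_i$ and is discarded. The key observation is that $\Uq E_i$ is stable under right multiplication by $K_i^{\pm1}$, since $E_iK_i^{\pm1}=q^{\mp2}K_i^{\pm1}E_i$; hence reduction modulo $\Uq E_i$ commutes with right multiplication by Laurent polynomials in $K_i$. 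Writing $E_i^{k-1}F_i^{k-1}\equiv c_{k-1}(K_i)$ for a Laurent polynomial $c_{k-1}$, this yields the recursion
\begin{equation*}
c_k(K_i)=\frac{q^k-q^{-k}}{q-q^{-1}}\,\frac{q^{1-k}K_i-q^{k-1}K_i^{-1}}{q-q^{-1}}\,c_{k-1}(K_i),\qquad c_0=1,
\end{equation*}
whose solution is the product $c_k(K_i)=\prod_{j=1}^{k}\frac{q^j-q^{-j}}{q-q^{-1}}\cdot\frac{q^{1-j}K_i-q^{j-1}K_i^{-1}}{q-q^{-1}}$.

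It remains to identify this product with the two stated closed forms, and this is where the actual work lies. I would evaluate the three pieces separately: $\prod_{j=1}^k(q^j-q^{-j})=(-1)^kq^{-k(k+1)/2}(q^2;q^2)_k$, the denominator $(q-q^{-1})^{2k}=q^{-2k}(1-q^2)^{2k}$, and, after factoring $q^{1-j}K_i-q^{j-1}K_i^{-1}=-q^{j-1}K_i^{-1}(1-q^{2-2j}K_i^2)$, the Cartan product $\prod_{j=1}^k(q^{1-j}K_i-q^{j-1}K_i^{-1})=(-1)^kq^{k(k-1)/2}K_i^{-k}(q^{2-2k}K_i^2;q^2)_k$, where the reindexing $j\mapsto k-j$ turns the product into the claimed $q$-shifted factorial. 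Multiplying these and collecting the powers of $q$ produces the first form, with prefactor $q^k$. The second form then follows from the reversal identity for $q$-shifted factorials, equivalently by factoring $1-q^{2-2j}K_i^2=-q^{2-2j}K_i^2(1-q^{2j-2}K_i^{-2})$, which converts $(q^{2-2k}K_i^2;q^2)_kK_i^{-k}$ into $(-1)^kq^{-k(k-1)}(K_i^{-2};q^2)_kK_i^k$. The main obstacle is purely bookkeeping: keeping the signs and the half-integer powers of $q$ consistent across the three products so that they telescope to the compact exponents $q^k$ and $(-1)^kq^{-k(k-2)}$.
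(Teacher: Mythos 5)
Your proof is correct, and it fills in exactly the induction that the paper leaves implicit: the paper's proof of this lemma is the single line ``straightforward verifications using \eqref{eq:uqsl3-relations} and \eqref{eq:uqsl3-star} and induction.'' All the steps check out, including the stability of $\Uq E_i$ under right multiplication by Laurent polynomials in $K_i$ and the bookkeeping of signs and $q$-powers ($-k(k+1)/2+k(k-1)/2+2k=k$ and $k-k(k-1)=-k(k-2)$), so this is the same approach, just written out in full.
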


\begin{proof}
Straightforward verifications using \eqref{eq:uqsl3-relations} and  \eqref{eq:uqsl3-star}
and induction.
\end{proof}


\subsection{The finite-dimensional representations of $\Uq$}\label{sec:reprs-uqsl3}
Finite-dimensional representations of $\Uq$ are weight representations and are uniquely determined, up to equivalence, by their highest weights. Let $(\pi_\lambda,V_\lambda)$ be an irreducible finite-dimensional representation with highest weight $\lambda=\lambda_1 \varpi_1 + \lambda_2 \varpi_2$, $\la_1,\la_2\in \N$, and $v_\lambda$ a highest weight vector such that
\begin{equation}\label{eq:defrelhwVla}
E_i\, v_\la = 0, \quad K_i\, v_\la = q^{(\la, \al_i)} v_\la = q^{\lambda_i} v_\la.
\end{equation}
Then the dimension of $V_\lambda$ is the same as the dimension of the corresponding irreducible representation $\pi_\lambda$ of $\mathfrak{su}(3)$, namely
\begin{equation*}
\dim(V_\la) = \frac12 (\la_1+1)(\la_2+1)(\la_1+\la_2+2).
\end{equation*}
Furthermore for a weight $\nu=\nu_1 \,\varpi_1 + \nu_2 \, \varpi_2$, the dimension of the weight space
$$V_\lambda(\nu)=\{ v\in V_\lambda:\,\, K_i\, v = q^{(\nu,\alpha_i)} v,\,\, i=1,2\},$$
and the dimension of the weight space corresponding to the weight $\nu$  in the representation of $\mathfrak{su}(3)$ coincide, see \cite[Ch. 7]{KS97}. In particular, $\dim(V_\lambda(\lambda))=1$. The vector  space $V_\lambda$ is generated by the vectors $v_\lambda$ and $F_{i_1}F_{i_2}\ldots F_{i_m} \, v_\lambda$, $i_j\in \{1,2\}$ and is equipped with an inner product $\langle \cdot, \cdot\rangle$ that satisfies
\begin{equation*}
\langle v_\lambda,v_\lambda\rangle=1,\qquad \langle  X\, v, w\rangle = \langle   v, X^\ast \, w\rangle, \qquad
\forall\, X\in \Uq, \quad \forall\, v,w\in V_\la.
\end{equation*}

Mudrov  \cite{Mudr} describes the Shapovalov basis for the Verma modules of $\Uq$, and we have adapted his proof and construction to an orthonormal basis for the finite-dimensional unitary representations of $\Uq$. For completeness, we have sketched the proof in Appendix \ref{apen:proof_basis}. It is essentially due to Mudrov \cite[\S 8]{Mudr}.
\begin{thm}\label{thm:basisVla}
The set of vectors
$$\mathscr{B} = \{ F_2^k \hat{F}_3^l F_1^m\, v_\la \mid
0\leq m \leq \la_1, \ 0\leq l \leq \la_2, \ 0\leq k \leq \la_2+m-l \}$$
forms an orthogonal basis for $V_\la$. Explicitly,
\begin{equation*}
\langle F_2^k \hat{F}_3^l F_1^m\, v_\la, F_2^{k'} \hat{F}_3^{l'} F_1^{m'}\, v_\la \rangle =
\de_{k,k'} \de_{l,l'} \de_{m,m'}
H_{k,l,m},
\end{equation*}
where
\begin{multline*}
H_{k,l,m} = (q^2,q^{-2(\la_2-l+m)};q^2)_k
(q^2, q^{-2\la_1};q^2)_m
(q^2, q^{-2\la_2}, q^{-2(\la_2+1+m)}, q^{-2(\la_1+\la_2+1)};q^2)_l \\
\times (1-q^{2})^{-2(k+2l+m)}
(-1)^{k+l+m} q^{3(k+3l+m)} q^{-l(l-2m)}q^{-2l\lambda_2}.
\end{multline*}
\end{thm}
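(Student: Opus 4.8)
The plan is to prove the displayed inner product formula; once that is in hand, the theorem follows formally. First I would check the cardinality: summing over the index set,
\[
\sum_{m=0}^{\la_1}\sum_{l=0}^{\la_2}(\la_2+m-l+1)=\tfrac12(\la_1+1)(\la_2+1)(\la_1+\la_2+2)=\dim V_\la,
\]
so $\mathscr{B}$ has exactly $\dim V_\la$ elements. Since the inner product is positive definite, the formula for $H_{k,l,m}$ in particular shows each diagonal entry is nonzero, and the vanishing of the off-diagonal entries then makes $\mathscr{B}$ an orthogonal set of $\dim V_\la$ nonzero vectors, hence an orthogonal basis. Next I would record the weights: using Lemma~\ref{lem:relinUq}(v) together with the relations $K_iF_j=q^{-(\al_i,\al_j)}F_jK_i$ one finds that $F_2^k\hat{F}_3^lF_1^m\,v_\la$ is a joint $K_1,K_2$-eigenvector with eigenvalues $q^{\la_1+k-l-2m}$ and $q^{\la_2-l+m-2k}$. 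Vectors in different weight spaces are orthogonal, and for a \emph{fixed} value of $k$ the assignment $(l,m)\mapsto(\text{weight})$ is injective; thus, once I know the inner product vanishes whenever $k\neq k'$, the only surviving case is $k=k'$, $l=l'$, $m=m'$, and the problem reduces to computing the norms.

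For the $k$-dependence I would exploit the quantum $\mathfrak{sl}(2)$ generated by $E_2,F_2,K_2$. Writing $w_{l,m}=\hat{F}_3^lF_1^m\,v_\la$, the identity $E_2\hat{F}_3=\hat{F}_3[-2]E_2$ (the case $a=0$ of Lemma~\ref{lem:relinUq}(iv)) iterates to $E_2\hat{F}_3^l=\hat{F}_3[-2]^lE_2$, and since $[E_2,F_1]=0$ this gives $E_2w_{l,m}=0$; by the weight computation $K_2w_{l,m}=q^{\la_2-l+m}w_{l,m}$. Hence $w_{l,m}$ is a highest weight vector of weight $n:=\la_2-l+m$ for this $\mathfrak{sl}(2)$. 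Using $F_2^\ast=E_2K_2^{-1}$, the normal ordering $(F_2^\ast)^k=q^{-k(k-1)}E_2^kK_2^{-k}$, and Lemma~\ref{lem:EkFkmodUqEk}(ii) with $i=2$ (whose $\Uq E_2$ remainder annihilates the highest weight vector $w_{l',m'}$), a direct computation yields
\[
\langle F_2^k w_{l,m},F_2^{k'} w_{l',m'}\rangle=\delta_{k,k'}\,(-1)^kq^{3k}(1-q^2)^{-2k}(q^2,q^{-2(\la_2-l+m)};q^2)_k\,\langle w_{l,m},w_{l',m'}\rangle .
\]
The Kronecker delta appears because a highest weight vector is orthogonal to the image of $F_2$, so the cases $k\neq k'$ vanish regardless of the weights; the prefactor is precisely the first factor of $H_{k,l,m}$. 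It remains to compute $N_{l,m}:=\langle w_{l,m},w_{l,m}\rangle$, the off-diagonal terms in $(l,m)$ being zero by the weight argument above.

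To compute $N_{l,m}$ I would peel off $F_1^m$ and $\hat{F}_3^l$ separately. Because $F_1$ commutes with $\hat{F}_3$ (Lemma~\ref{lem:relinUq}(i)) we may write $w_{l,m}=F_1^m\hat{F}_3^l\,v_\la$. The vector $v_\la$ is a genuine highest weight vector for the $\mathfrak{sl}(2)$ generated by $E_1,F_1,K_1$, so exactly the computation of the previous paragraph (now with $i=1$ and $v_\la$ in place of $w$) gives $\langle F_1^mv_\la,F_1^mv_\la\rangle=(-1)^mq^{3m}(1-q^2)^{-2m}(q^2,q^{-2\la_1};q^2)_m$, the $m$-factor of $H$. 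For the $\hat{F}_3^l$ part I would first establish from the $\ast$-structure and the definitions that $\hat{F}_3^\ast=q\,\hat{E}_3K_1^{-1}K_2^{-1}$, and derive (as in Lemma~\ref{lem:relinUq}) the relation $\hat{E}_3F_1=F_1\hat{E}_3[1]+(\,\cdot\,)E_2$, which together with $\hat{E}_3[a]v_\la=0$ shows that $F_1^mv_\la$ is annihilated by $\hat{E}_3$. Peeling one $\hat{F}_3$ off each side and moving the $K_i$ past $\hat{F}_3$ by Lemma~\ref{lem:relinUq}(v) gives a recursion $N_{l,m}=q^{c(l,m)}\langle w_{l-1,m},\hat{E}_3\hat{F}_3\,w_{l-1,m}\rangle$; since the already-proved Kronecker delta in $k$ shows that $w_{l-1,m}$ is orthogonal to every basis vector carrying a nonzero power of $F_2$, only the ``$E_2$-highest weight part'' of $\hat{E}_3\hat{F}_3\,w_{l-1,m}$ contributes, and that part is forced by weight uniqueness to be a scalar multiple $\tau_{l,m}\,w_{l-1,m}$. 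Thus $N_{l,m}=q^{c(l,m)}\tau_{l,m}N_{l-1,m}$, and iterating telescopes $\tau_{l,m}$ into the three $q$-shifted Pochhammer symbols $(q^{-2\la_2},q^{-2(\la_2+1+m)},q^{-2(\la_1+\la_2+1)};q^2)_l$ appearing in $H$.

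\textbf{Main obstacle.} The difficulty is concentrated in the single-step scalar $\tau_{l,m}$, i.e.\ in normal ordering the composite root vectors $\hat{E}_3,\hat{F}_3$. Unlike $E_i,F_i$ these do not form an $\mathfrak{sl}(2)$-triple, so $\hat{E}_3\hat{F}_3$ is not simply ``Cartan plus $\Uq\hat{E}_3$''; its commutator produces correction terms involving $E_1,E_2,F_1,F_2$, and one must show that, modulo the span of basis vectors carrying a positive power of $F_2$ (harmless by the orthogonality of the previous paragraph) and after using $E_2F_1^mv_\la=0$ and $\hat{E}_3F_1^mv_\la=0$, the action on the weight line $\mathbb{C}\,w_{l-1,m}$ is multiplication by a product of three $q$-linear factors with the shifts $\la_2$, $\la_2+1+m$, $\la_1+\la_2+1$ (the $m$-dependence of the middle shift is exactly the coupling between the $\hat{F}_3$- and $F_1$-directions). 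This computation, essentially Mudrov's, is where the real work lies; the remaining bookkeeping is only the reconciliation of the accumulated powers of $q$ and of $(1-q^2)$ with the prefactor of $H_{k,l,m}$, which is routine.
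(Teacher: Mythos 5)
Your overall architecture parallels the paper's: orthogonality from the $K_1,K_2$-weights, vanishing for $k\neq k'$ via the $(E_2,F_2,K_2)$-triple together with Lemma \ref{lem:EkFkmodUqEk}\eqref{lem:EkFkmodUqEk-ii}, and a one-step recursion in $l$ for the remaining norm. But there is a genuine gap exactly where you place your ``main obstacle'': the scalar $\tau_{l,m}$ is never computed, and that scalar \emph{is} the theorem --- it carries the three Pochhammer factors $(q^{-2\la_2},q^{-2(\la_2+1+m)},q^{-2(\la_1+\la_2+1)};q^2)_l$ and most of the powers of $q$ in $H_{k,l,m}$. A plan that reduces the statement to ``normal-order $\hat E_3\hat F_3$, which is where the real work lies'' has not proved the statement. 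Moreover the route you propose for that step is the hard one: keeping $\hat F_3$ on both sides forces you to normal-order $\hat E_3\hat F_3$, whose Cartan-valued coefficients generate a cascade of correction terms. The paper sidesteps this entirely by passing to the \emph{mixed} pairing $\langle F_3^l v_\la,\hat F_3^l v_\la\rangle$: Lemma \ref{lem:relinUqa}\eqref{lem:relinUq-x} rewrites $\hat F_3$ on the left as $F_3$ times a $K_2$-scalar plus an $F_2$-term that dies against an $E_2$-highest vector, so the adjoint to be moved across is the plain $E_3=E_2E_1-qE_1E_2$, and the one-step recursion only requires $E_2E_1\hat F_3^l v_\la$, which Corollary \ref{cor:lem:relinUq-iii} gives in closed form. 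The paper also moves $E_1^m$ across \emph{before} running the $l$-recursion (via $E_1F_3=F_3E_1+F_2K_1$, Lemma \ref{lem:relinUqa}\eqref{lem:relinUq-xi}), which is precisely how the $m$-dependent shift $\la_2+1+m$ arises; in your ordering that dependence must emerge from $\tau_{l,m}$, which you have not exhibited.

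A secondary soft point: your assertion that the $E_2$-highest part of $\hat E_3\hat F_3\,w_{l-1,m}$ is ``forced by weight uniqueness'' to be a multiple of $w_{l-1,m}$ needs the fact that the space of $E_2$-highest vectors of the relevant $(K_1,K_2)$-weight is one-dimensional. This does follow from the classical weight multiplicities of $V_\la$ quoted in Section \ref{sec:reprs-uqsl3}, but as written it is dangerously close to assuming the basis structure you are trying to establish, so it should be argued explicitly. The cardinality count, the weight argument, and the $F_2^k$-peeling (including the $k$-factor of $H_{k,l,m}$ and the base case $N_{0,m}$) are fine.
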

In Theorem \ref{thm:basisVla} we use the standard notation in \cite{GR} for $q$-shifted factorials
\begin{align*}
(q^a;q)_n&=(1-q^a)(1-q^{a+1})\ldots(1-q^{a+n-1}), \\
(q^{a_1},q^{a_2},\ldots ,q^{a_j};q)_n&=(q^{a_1},q)_n(q^{a_2},q)_n\ldots(q^{a_j},q)_n.
\end{align*}
Note that $H_{k,l,m}$ is indeed positive. In the following proposition we calculate the action of the generators of $\Uq$ in the basis $\mathscr{B}$ of Theorem \ref{thm:basisVla}.

\begin{prop}\label{prop:actiongeneratorsinbasis}
In the basis $\mathscr{B}$ of $V_\la$ as in Theorem \ref{thm:basisVla} we have
\begin{enumerate}[(i)]
\setlength\itemsep{0.3em}
\item \label{prop:gen_1} $K_1  F_2^k \hat{F}_3^l F_1^m\, v_\la =
q^{\la_1+k-l-2m} \, F_2^k \hat{F}_3^l F_1^m\, v_\la$,
\item  \label{prop:gen_2} $K_2  F_2^k \hat{F}_3^l F_1^m\, v_\la =  q^{\la_2-2k-l+m} \,F_2^k \hat{F}_3^l F_1^m\, v_\la$,
\item \label{prop:gen_3}  $F_1  F_2^k \hat{F}_3^l F_1^m\, v_\la = a_k(l,m) \, F_2^k \hat{F}_3^l F_1^{m+1}\, v_\la +
b_k(l,m)\, F_2^{k-1} \hat{F}_3^{l+1} F_1^{m}\, v_\la$,
\item \label{prop:gen_4} $E_1  F_2^k \hat{F}_3^l F_1^m\, v_\la = \al_k(l,m) \, F_2^k \hat{F}_3^l F_1^{m-1}\, v_\la +
\be_k(l,m) \, F_2^{k+1} \hat{F}_3^{l-1} F_1^{m}\, v_\la$,
\item \label{prop:gen_5} $F_2  F_2^k \hat{F}_3^l F_1^m\, v_\la = F_2^{k+1} \hat{F}_3^l F_1^m\, v_\la,$
\item \label{prop:gen_6} $E_2  F_2^{k} \hat{F}_3^l F_1^m\, v_\la = \eta_{k}(l,m) \,  F_2^{k-1} \hat{F}_3^l F_1^m\, v_\la$,
\end{enumerate}
with coefficients
\begin{gather*}
a_k(l,m) = \frac{(q^{\lambda_2+m+1-k-l}-q^{-\lambda_2-m-1+k+l})}{(q^{\lambda_2+m+1-l}-q^{-\lambda_2-m-1+l})},\qquad b_k(l,m) = \frac{(q^k-q^{-k})}{(q^{\lambda_2+m+1-l}-q^{-\lambda_2-m-1+l})}, \\
\eta_{k}(l,m) = \frac{q^k-q^{-k}}{q-q^{-1}}
\frac{q^{1-k+\la_2-l+m}-q^{k-1-\la_2+l-m}}{q-q^{-1}},\\
\al_k(l,m) = \frac{(q^m-q^{-m})(q^{\lambda_1-m+1}-q^{-\lambda_1+m-1})(q^{\lambda_2+m+1}-q^{-\lambda_2-m-1})}{(q-q^{-1})^2(q^{\lambda_2+m-l+1}-q^{-\lambda_2-m+l-1})},\\
\be_k(l,m) =\frac{(q^l-q^{-l})(q^{\lambda_2-l+1}-q^{-\lambda_2+l-1})(q^{\lambda_1+\lambda_2-l+2}-q^{-\lambda_1-\lambda_2+l-2})}{(q-q^{-1})^2(q^{\lambda_2+m-l+1}-q^{-\lambda_2-m+l-1})}.
\end{gather*}
\end{prop}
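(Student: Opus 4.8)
The plan is to read off every action directly from the commutation relations of Lemmas~\ref{lem:relinUq} and~\ref{lem:EkFkmodUqEk}, using the weight data of parts (i) and (ii) as the main bookkeeping device. Parts (i), (ii) and (v) I would do first. Part (v) is the definition of the basis. For the two Cartan generators I push $K_i$ to the right through $F_2^k\hat{F}_3^lF_1^m$ by means of $K_iF_j=q^{-(\al_i,\al_j)}F_jK_i$ from~\eqref{eq:uqsl3-relations} together with $K_i\hat{F}_3=q^{-1}\hat{F}_3K_i$ from Lemma~\ref{lem:relinUq}, and then use $K_iv_\la=q^{\la_i}v_\la$; collecting exponents of $q$ gives exactly the eigenvalues $q^{\la_1+k-l-2m}$ and $q^{\la_2-2k-l+m}$. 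In particular $\hat{F}_3^lF_1^mv_\la$ has $K_2$-weight $q^{\la_2-l+m}$, which I use repeatedly below.

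For (vi) I first note that $E_2$ annihilates $\hat{F}_3^lF_1^mv_\la$: the relation $E_2\hat{F}_3[a]=\hat{F}_3[a-2]E_2-\tfrac{q^a-q^{-a}}{q-q^{-1}}F_1$ of Lemma~\ref{lem:relinUq} has vanishing correction term at $a=0$, so $E_2\hat{F}_3=\hat{F}_3[-2]E_2$ and hence $E_2\hat{F}_3^l=\hat{F}_3[-2]^lE_2$; since $E_2$ commutes with $F_1$ and $E_2v_\la=0$, the claim follows. I then expand $E_2F_2^k=F_2^kE_2+\tfrac{q^k-q^{-k}}{q-q^{-1}}F_2^{k-1}\tfrac{q^{1-k}K_2-q^{k-1}K_2^{-1}}{q-q^{-1}}$ by Lemma~\ref{lem:EkFkmodUqEk}; the $F_2^kE_2$-term dies, and in the surviving term $\tfrac{q^{1-k}K_2-q^{k-1}K_2^{-1}}{q-q^{-1}}$ acts on $\hat{F}_3^lF_1^mv_\la$ as the scalar $\tfrac{q^{1-k+\la_2-l+m}-q^{k-1-\la_2+l-m}}{q-q^{-1}}$, producing precisely $\eta_k(l,m)$.

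Part (iii) is the core of the proposition, and I would split it into an algebra identity and a weight-dependent rewriting. From $F_1F_2=qF_2F_1+F_3$ and the $q$-commutation $F_3F_2=q^{-1}F_2F_3$, which follows from the quantum Serre relation~\eqref{eq:qSerre}, an induction on $k$ gives
\[
F_1F_2^k=q^kF_2^kF_1+\frac{q^k-q^{-k}}{q-q^{-1}}\,F_2^{k-1}F_3 .
\]
Applying this to $\hat{F}_3^lF_1^mv_\la$ and using $F_1\hat{F}_3[a]=\hat{F}_3[a]F_1$ from Lemma~\ref{lem:relinUq}, the first term becomes $q^kF_2^k\hat{F}_3^lF_1^{m+1}v_\la$, and it remains to express $F_3\hat{F}_3^lF_1^mv_\la$ in the basis. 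Setting $w=\hat{F}_3^lF_1^mv_\la$ (of $K_2$-weight $q^\mu$, $\mu=\la_2-l+m$), the definition of $\hat{F}_3=\hat{F}_3[0]$ together with $F_1F_2w=F_3w+qF_2F_1w$ yields the key relation
\[
\hat{F}_3\,w=\frac{q^{\mu+1}-q^{-\mu-1}}{q-q^{-1}}\,F_3\,w+q^{\mu+1}\,F_2F_1\,w ,
\]
which I solve for $F_3w$; since $\hat{F}_3w=\hat{F}_3^{l+1}F_1^mv_\la$ and $F_2F_1w=F_2\hat{F}_3^lF_1^{m+1}v_\la$ are basis vectors, substituting back and collecting the coefficients of $F_2^k\hat{F}_3^lF_1^{m+1}v_\la$ and $F_2^{k-1}\hat{F}_3^{l+1}F_1^mv_\la$ gives $a_k(l,m)$ and $b_k(l,m)$ after a short simplification.

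Finally, for (iv) I avoid commuting $E_1$ through $\hat{F}_3$ (no such relation is at hand) and instead use the orthogonality of $\mathscr{B}$ and the $\ast$-structure $E_1^\ast=K_1F_1$. Writing $u=F_2^k\hat{F}_3^lF_1^mv_\la$ and expanding $E_1u$ in the orthogonal basis, the coefficient along a basis vector $w'$ equals $\langle u,K_1F_1w'\rangle/\langle w',w'\rangle$; by part (iii) and part (i), $K_1F_1w'$ has a component along $u$ only for $w'=F_2^k\hat{F}_3^lF_1^{m-1}v_\la$ and $w'=F_2^{k+1}\hat{F}_3^{l-1}F_1^mv_\la$, which both explains why exactly two terms survive and expresses $\al_k(l,m)$ and $\be_k(l,m)$ as an $a$- or $b$-coefficient from (iii) times a quotient of the norms $H_{k,l,m}$ of Theorem~\ref{thm:basisVla}; evaluating these quotients gives the stated formulas. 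I expect the main obstacle to be (iii): pinning down the weight-dependent identity relating the bare $F_3$ to $\hat{F}_3$ on $w$, and checking that commuting $F_1$ through $F_2^k$ collapses to the single term $F_2^{k-1}F_3$, so that only the two advertised basis vectors appear.
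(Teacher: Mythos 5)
Your proposal is correct, and for parts (\ref{prop:gen_1}), (\ref{prop:gen_2}), (\ref{prop:gen_5}), (\ref{prop:gen_6}) and (\ref{prop:gen_4}) it coincides with the paper's argument: the Cartan actions by pushing $K_i$ through, the $E_2$-action via Lemma \ref{lem:EkFkmodUqEk}\eqref{lem:EkFkmodUqEk-iii} and Lemma \ref{lem:relinUq}\eqref{lem:relinUq-iv}, and the $E_1$-action by dualising (\ref{prop:gen_3}) through $E_1^\ast=K_1F_1$ and the norms $H_{k,l,m}$. The genuine difference is in part (\ref{prop:gen_3}). The paper treats the cases $k=0,1$ by hand and then invokes the $q$-Serre relation \eqref{eq:qSerre} to obtain the constant-coefficient recurrences $a_k+a_{k-2}=(q+q^{-1})a_{k-1}$ and $b_k+b_{k-2}=(q+q^{-1})b_{k-1}$, which it solves via (associated) Chebyshev polynomials. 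You instead establish the closed commutation identity $F_1F_2^k=q^kF_2^kF_1+\frac{q^k-q^{-k}}{q-q^{-1}}F_2^{k-1}F_3$ (an easy induction from $F_1F_2=qF_2F_1+F_3$ and $F_2F_3=qF_3F_2$, the latter being Lemma \ref{lem:relinUqa}\eqref{lem:relinUq-vi} and a consequence of \eqref{eq:qSerre}), and then convert $F_3$ acting on the $K_2$-weight vector $w=\hat F_3^lF_1^m v_\la$ into $\hat F_3 w$ and $F_2F_1w$; the identity you use for this is exactly Lemma \ref{lem:relinUqa}\eqref{lem:relinUq-x} evaluated on a weight vector, so it is available in the paper (though only stated in the appendix). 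I have checked that solving for $F_3w$ and collecting terms reproduces $a_k(l,m)$ and $b_k(l,m)$ exactly. Your route buys the coefficients in closed form in one step, with no recurrence to solve; the paper's route avoids the auxiliary commutation identity but must then recognise and solve the Chebyshev-type recursion. Both are complete proofs.
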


\begin{rmk} Note that the denominators in $a_k(l,m)$, $b_k(l,m)$, $\eta_k(l,m)$, $\alpha_k(l,m)$ and $\beta_k(l,m)$ are non-zero by the
ranges of $k,l,m$ as in Theorem \ref{thm:basisVla}.
\end{rmk}

\begin{proof}
The action of $K_i$, $i=1,2$, follows from  \eqref{eq:defrelhwVla}, \eqref{eq:uqsl3-relations} and Lemma \ref{lem:relinUq}\eqref{lem:relinUq-v}. The action of $F_2$ is trivial. The action of $E_2$ follows from
Lemma \ref{lem:EkFkmodUqEk}\eqref{lem:EkFkmodUqEk-iii},
Lemma \ref{lem:relinUq}\eqref{lem:relinUq-iv} and \eqref{eq:defrelhwVla}
and the established actions of $K_2$. This completes the proof of \eqref{prop:gen_1}, \eqref{prop:gen_2}, \eqref{prop:gen_5} and \eqref{prop:gen_6}.

In order to establish the action of $F_1$, we first show that there exist constants $a_k$ and $b_k$ so that
\begin{equation*}
F_1  F_2^k \hat{F}_3^l F_1^m\, v_\la =
a_k F_2^k \hat{F}_3^l F_1^{m+1}\, v_\la +
b_k F_2^{k-1} \hat{F}_3^{l+1} F_1^{m}\, v_\la
\end{equation*}
by induction with respect to $k$. The case $k=0$ with
$a_0=1$, $b_0=0$ is immediate from
Lemma \ref{lem:relinUq}\eqref{lem:relinUq-i}. In case $k=1$, we write
\begin{align*}
F_1  F_2 \hat{F}_3^l F_1^m\, v_\la &=
F_1 F_2 \frac{qK_2-q^{-1}K_2^{-1}}{q-q^{-1}}
\frac{q-q^{-1}}{q^{1+\la_2-l+m}-q^{-1-\la_2+l-m}}
\hat{F}_3^l F_1^m\, v_\la \\
&= \frac{q-q^{-1}}{q^{1+\la_2-l+m}-q^{-1-\la_2+l-m}} \left(
\hat{F}_3 + F_2F_1 \frac{K_2-K_2^{-1}}{q-q^{-1}} \right)
\hat{F}_3^l F_1^m\, v_\la \\
&=\frac{q^{\la_2-l+m}-q^{-\la_2+l-m}}{q^{1+\la_2-l+m}-q^{-1-\la_2+l-m}}
F_2 \hat{F}_3^l F_1^{m+1}\, v_\la \\
& \hspace{5cm}+
\frac{q-q^{-1}}{q^{1+\la_2-l+m}-q^{-1-\la_2+l-m}} \hat{F}_3^{l+1} F_1^m\, v_\la
\end{align*}
again using Lemma \ref{lem:relinUq}\eqref{lem:relinUq-i}. So the case $k=1$
is proved with
\[
a_1 = \frac{q^{\la_2-l+m}-q^{-\la_2+l-m}}{q^{1+\la_2-l+m}-q^{-1-\la_2+l-m}}, \quad
b_1 = \frac{q-q^{-1}}{q^{1+\la_2-l+m}-q^{-1-\la_2+l-m}}.
\]
For the induction we assume $k\geq 2$, so that
\begin{gather*}
F_1  F_2^k \hat{F}_3^l F_1^m\, v_\la =
F_1F_2^2  F_2^{k-2} \hat{F}_3^l F_1^m\, v_\la =
\bigl(-F_2^2F_1 +(q+q^{-1})F_2F_1F_2\bigr) F_2^{k-2} \hat{F}_3^l F_1^m\, v_\la
\end{gather*}
by the $q$-Serre relation \eqref{eq:qSerre}. Using the induction hypothesis, we find
\begin{multline*}
F_1  F_2^k \hat{F}_3^l F_1^m\, v_\la =
- F_2^2 \bigl( a_{k-2} F_2^{k-2} \hat{F}_3^l F_1^{m+1}\, v_\la +
b_{k-2} F_2^{k-3} \hat{F}_3^{l+1} F_1^{m}\, v_\la\bigr) \\
+ (q+q^{-1}) F_2  \bigl(
a_{k-1} F_2^{k-1} \hat{F}_3^l F_1^{m+1}\, v_\la +
b_{k-1} F_2^{k-2} \hat{F}_3^{l+1} F_1^{m}\, v_\la\bigr) \\
=(-a_{k-2}+(q+q^{-1})a_{k-1}) F_2^k \hat{F}_3^l F_1^{m+1}\, v_\la +
(-b_{k-2}+(q+q^{-1})b_{k-1})F_2^{k-1} \hat{F}_3^{l+1} F_1^{m}\, v_\la
\end{multline*}
which proves the induction step as well as the recurrence
\begin{gather*}
a_k + a_{k-2}=(q+q^{-1})a_{k-1}, \qquad
b_k + b_{k-2}=(q+q^{-1})b_{k-1}, \quad k\geq 2.
\end{gather*}
This recursion is solved by the Chebyshev polynomials (of the second kind) at $\frac12(q+q^{-1})$  as
well as by the associated Chebyshev polynomials. This gives the solution for
the recurrences and proves \eqref{prop:gen_3}

The action of $E_1$ follows from that of $F_1$, considering the adjoint. Note that
\begin{align*}
\langle E_1  F_2^k \hat{F}_3^l F_1^m\, v_\la , F_2^{k'} \hat{F}_3^{l'} F_1^{m'} \, v_\la \rangle
&= \langle F_2^k \hat{F}_3^l F_1^m\, v_\la , E_1^\ast F_2^{k'} \hat{F}_3^{l'} F_1^{m'} \, v_\la \rangle \\
&= \langle F_2^k \hat{F}_3^l F_1^m\, v_\la , K_1F_1  F_2^{k'} \hat{F}_3^{l'} F_1^{m'} \, v_\la \rangle,
\end{align*}
equals zero if $(k',l',m')\neq (k,l,m+1), (k+1,l-1,m)$. Moreover we have
\begin{align*}
\alpha_k(l,m)H_{k,l,m-1} &=\langle E_1 F_2^k\hat F_3^l F_1^m \, v_\lambda, F_2^{k}\hat F_3^l F_1^{m-1} \, v_\lambda\rangle \\
&=\langle F_2^k\hat F_3^l F_1^m \, v_\lambda, K_1F_1 F_2^{k}\hat F_3^l F_1^{m-1} \, v_\lambda\rangle \\
&= q^{k-l-2m+\lambda_1}\, a_k(l,m-1)\, H_{k,l,m},
\end{align*}
and
\begin{align*}
\beta_k(l,m)H_{k+1,l-1,m} &=\langle E_1 F_2^k\hat F_3^l F_1^m \, v_\lambda, F_2^{k+1}\hat F_3^{l-1} F_1^{m} \, v_\lambda\rangle \\
&=\langle F_2^k\hat F_3^l F_1^m \, v_\lambda, K_1F_1 F_2^{k-1}\hat F_3^{l-1} F_1^{m} \, v_\lambda\rangle \\
&= q^{k-l-2m+\lambda_1+2}\, b_{k+1}(l-1,m)\, H_{k,l,m}.
\end{align*}
Now the expressions of $\alpha_k(l,m)$ and $\beta_k(l,m)$ follow from the explicit expression of $H_{k,l,m}$ Theorem \ref{thm:basisVla} by a straightforward computation. \end{proof}


\section{The coideal subalgebra}
\label{sec:coideal-subalgebra}

In this section we follow Kolb \cite{Kol14} and introduce a right coideal subalgebra $\mathcal{B}$ of $\Uq$ which is the quantum analogue of $\mathcal{U}(\mathfrak{k})$ with $\mathfrak{k}=\mathfrak{u}(2)$ embedded in $\mathfrak{g}=\mathfrak{su}(3)$.  Let $c_1, c_2 \in \C^{\times}$ and write $c = (c_1, c_2)$.
Following \cite[Example 9.4]{Kol14}, $\mathcal{B}_{c} = \mathcal{B}$ is the right coideal subalgebra of $\Uq$, i.e. $\Delta(\mathcal{B}) \subset \mathcal{B} \tensor \Uq$, generated by
\begin{equation}
\label{uqsl3_eqn_B_generators}
K^{\pm 1} = \left(K_1 K_2^{-1}\right)^{\pm 1}, \quad B_1^{c} = B_1 = F_1 - c_1 E_2 K_1^{-1}, \quad B_2^{c} = B_2 = F_2 - c_2 E_1 K_2^{-1}.
\end{equation}
Throughout Sections \ref{sec:coideal-subalgebra} and \ref{sec:branching-rule} we omit the subscript and superscript $c$ in $\mathcal{B}_c$ and $B_i^c$  since the coideal subalgebra $\mathcal{B}$ will be fixed.

If we assume $c_1 \overline{c_2} = q^3 = \overline{c_1} c_2$ then it follows that $B_1^\ast = -\overline{c_1} K^{-1} B_2$, $B_2^\ast = -\overline{c_2} K B_1$ and $K^\ast = K$, so that $\mathcal{B}^\ast=\mathcal{B}$. By a straightforward computation we have
\begin{equation*}
\begin{split}
\Delta(B_1) &= B_1 \tensor K_1^{-1} + 1 \tensor F_1 - c_1 K^{-1} \tensor E_2 K_1^{-1}, \\
\Delta(B_2) &= B_2 \tensor K_2^{-1} + 1 \tensor F_2 - c_2 K \tensor E_1 K_2^{-1}.
\end{split}
\end{equation*}
The Serre relations for $\mathcal{B}$ follow from from \cite[Lemma 7.2, Theorem 7.4]{Kol14} taking $\mathcal{Z}_1 = -K^{-1}$ and $\mathcal{Z}_2 = -K$
\begin{equation}
\label{eq:serre_relations_B}
\begin{split}
B_1^2 B_2 - [2]_q B_1 B_2 B_1 + B_2 B_1^2 &= [2]_q (q c_2 K + q^{-2} c_1 K^{-1}) B_1, \\
B_2^2 B_1 - [2]_q B_2 B_1 B_2 + B_1 B_2^2 &= [2]_q (q c_1 K^{-1} + q^{-2} c_2 K) B_2.
\end{split}
\end{equation}
Alternatively \eqref{eq:serre_relations_B} can be verified directly from the definitions of $B_1$, $B_2$ and $K$.

The Cartan subalgebra of $\mathcal{B}$ is generated by $K^{\pm 1}$, $C_1$ and $C_2$, where
\begin{equation}
\label{uqsl3_eqn_C1C2}
\begin{split}
C_1 &= B_1 B_2 - q B_2 B_1 - \frac{1}{q - q^{-1}} c_2 K + \frac{q + q^{-1}}{q - q^{-1}} c_1 K^{-1}, \\
C_2 &= B_2 B_1 - q B_1 B_2 - \frac{1}{q - q^{-1}} c_1 K^{-1} + \frac{q + q^{-1}}{q - q^{-1}} c_2 K.
\end{split}
\end{equation}
Moreover if $c_1, c_2 \in \R^{\times}$, then $C_1$ and $C_2$ are self-adjoint.
The generators of  the Cartan subalgebra of $\mathcal{B}$ satisfy the relations $[K, C_i] = 0$ for $i = 1,2$, $[C_1, C_2] = 0$ and
\begin{equation}
\label{uqsl3_eqn_comm_relations_cartan}
\begin{split}
K B_1 &= q^{-3} B_1 K, \quad C_1 B_1 = q B_1 C_1, \quad C_2 B_1 = q^{-1} B_1 C_2, \\
K B_2 &= q^3 B_2 K, \quad C_1 B_2 = q^{-1} B_2 C_1, \quad C_2 B_2 = q B_2 C_2.
\end{split}
\end{equation}
Note that by \cite[Theorem 8.5]{KL08} the center of $\mathcal{B}$ is of rank $2$.
Hence the center of $\mathcal{B}$ is generated by $K^{\frac{1}{3}} C_1$ and $K^{-\frac{1}{3}} C_2$, extending $\mathcal{B}$ by cube roots of $K$. Then the central elements are self-adjoint for $c_1, c_2\in \R^{\times}$.

\subsection{Representation theory of $\mathcal{B}$}

Let $(\tau,W)$ be a finite-dimensional representation of $\mathcal{B}$. Since $W$ is a finite-dimensional complex vector space, there exists a vector $w\in W$ such that $\tau(K)w=\nu\, w$ for some $\nu\in \C$.  Then it follows from \eqref{uqsl3_eqn_comm_relations_cartan} that
$$\tau(K)\tau(B_1)^i \, w=q^{-3i}\, \tau(B_1)^i\tau(K)w=q^{-3i} \nu \tau(B_1)^i\, w,\qquad i\in \N,$$
so that the vectors $(\tau(B_1)^i \, w)_i$ are eigenvectors of $\tau(K)$ with different eigenvalues. Since $W$ is finite-dimensional, there exists $j\in \N$ such that $\tau(B_1^{j+1})w=0$ and $\tau(B_1^{j})w\neq 0$. Therefore $w_0=\tau(B_1^{j})w$ is a highest weight vector, i.e.
$$\tau(B_1)\, w_0=0, \qquad \tau(K)\, w_0 = \kappa \, w_0, \qquad q^{-3}\kappa \, \notin \sigma(K),$$
where $\kappa$ is the weight of $w_0$ and $\sigma(K)$ is the spectrum of $K$. Note that $\kappa \in \C^{\times}$ since it is the eigenvalue of an invertible operator.

\begin{prop} \label{uqsl3_prop_C1C2mu}
Let $\tau$ be a finite-dimensional irreducible representation of $\mathcal{B}$ on the vector space $W$. Then $\tau$ is determined by the dimension of $W$ and the action of $K$ on a highest weight vector.
\end{prop}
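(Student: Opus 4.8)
The plan is to exploit the weight structure induced by $K$ together with the ladder operators $B_1$ and $B_2$, mimicking the classical $\mathfrak{su}(2)$ highest-weight argument but adapted to the commutation relations \eqref{uqsl3_eqn_comm_relations_cartan}. As established in the discussion preceding the proposition, any finite-dimensional $\mathcal{B}$-representation $(\tau,W)$ contains a highest weight vector $w_0$ with $\tau(B_1)w_0=0$ and $\tau(K)w_0=\kappa\,w_0$ for some $\kappa\in\C^\times$. The first step is to build a chain of vectors $w_i=\tau(B_2)^i w_0$ and observe, using $KB_2=q^3 B_2 K$, that each $w_i$ is a $K$-eigenvector with eigenvalue $q^{3i}\kappa$; since these eigenvalues are distinct and $W$ is finite-dimensional, the nonzero $w_i$ are linearly independent and the chain terminates, say $w_0,\dots,w_n\neq 0$ with $\tau(B_2)w_{n}=0$.

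The key step is to show that $U=\mathrm{span}\{w_0,\dots,w_n\}$ is already $\mathcal{B}$-invariant, for then irreducibility forces $W=U$ and $\dim W=n+1$. Invariance under $K$ is immediate from the eigenvalue computation, and invariance under $B_2$ holds by construction (with $\tau(B_2)w_n=0$). The remaining task is to verify that $\tau(B_1)$ maps $U$ into itself. Since $C_1B_1=qB_1C_1$ and $C_2B_1=q^{-1}B_1C_2$ together with $[C_1,C_2]=[K,C_i]=0$ show that the Cartan generators act diagonally on the weight chain, I expect $\tau(B_1)w_i$ to be proportional to $w_{i-1}$. I would prove $\tau(B_1)w_i=\mu_i\,w_{i-1}$ by induction on $i$, computing the coefficient $\mu_i$ explicitly. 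The inductive step rests on commuting $B_1$ past $B_2$: from the definitions \eqref{uqsl3_eqn_C1C2} one has $B_1B_2-qB_2B_1=C_1+\frac{1}{q-q^{-1}}c_2K-\frac{q+q^{-1}}{q-q^{-1}}c_1K^{-1}$, so writing $\tau(B_1)w_i=\tau(B_1)\tau(B_2)w_{i-1}=q\,\tau(B_2)\tau(B_1)w_{i-1}+\bigl(\tau(C_1)+\tfrac{c_2}{q-q^{-1}}\tau(K)-\tfrac{(q+q^{-1})c_1}{q-q^{-1}}\tau(K)^{-1}\bigr)w_{i-1}$ reduces $\tau(B_1)w_i$ to known lower terms once one knows how $C_1$ acts on $w_{i-1}$.

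This forces me to pin down the action of the central-type elements $C_1,C_2$ on the chain, which is the main obstacle. Because $w_0$ is annihilated by $B_1$, the defining formula for $C_1$ gives $\tau(C_1)w_0=\bigl(-\tfrac{c_2}{q-q^{-1}}\kappa+\tfrac{(q+q^{-1})c_1}{q-q^{-1}}\kappa^{-1}\bigr)w_0$, which is determined by $\kappa$ alone; the relations $C_1B_2=q^{-1}B_2C_1$ and $C_2B_2=qB_2C_2$ then propagate this to $\tau(C_1)w_i=q^{-i}c_1^{(0)}\,w_i$ and $\tau(C_2)w_i=q^{i}c_2^{(0)}\,w_i$ for scalars $c_1^{(0)},c_2^{(0)}$ fixed by $\kappa$. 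Feeding these eigenvalues into the inductive relation for $\mu_i$ yields a closed expression for each $\mu_i$ in terms of $q$, $\kappa$, $c_1$, $c_2$ and $i$, showing $\tau(B_1)U\subseteq U$ and completing the invariance argument. Finally, the coefficients $\mu_i$ depend only on $\kappa$ and $\dim W=n+1$ (the parameters $c_1,c_2$ being fixed with the coideal subalgebra), so the entire action of $B_1,B_2,K$ on the basis $w_0,\dots,w_n$ is determined by these two data; hence $\tau$ is determined up to isomorphism by $\dim W$ and the weight $\kappa$ of the highest weight vector, as claimed.
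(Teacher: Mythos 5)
Your overall strategy --- build the chain $w_i=\tau(B_2)^iw_0$, separate the $w_i$ by their distinct $K$-eigenvalues, let $C_1,C_2$ act diagonally along the chain, and prove $\tau(B_1)w_i=\mu_i w_{i-1}$ by induction using $B_1B_2=C_1+qB_2B_1+\tfrac{c_2}{q-q^{-1}}K-\tfrac{(q+q^{-1})c_1}{q-q^{-1}}K^{-1}$ --- is exactly the paper's proof. There is, however, a concrete error at the point where you compute the $C_1$-eigenvalue of $w_0$. From \eqref{uqsl3_eqn_C1C2}, $C_1=B_1B_2-qB_2B_1-\tfrac{1}{q-q^{-1}}c_2K+\tfrac{q+q^{-1}}{q-q^{-1}}c_1K^{-1}$. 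The condition $\tau(B_1)w_0=0$ kills the term $qB_2B_1w_0$, but \emph{not} the term $B_1B_2w_0=\tau(B_1)w_1=\mu_1w_0$, which is precisely one of the unknowns your induction is supposed to produce. So your displayed formula $\tau(C_1)w_0=\bigl(-\tfrac{c_2}{q-q^{-1}}\kappa+\tfrac{(q+q^{-1})c_1}{q-q^{-1}}\kappa^{-1}\bigr)w_0$ is false for $n\geq 1$ (it is correct only when $n=0$, i.e.\ when $B_2w_0=0$), and the claim that the eigenvalue $\eta_1$ of $C_1$ on $w_0$ is determined by $\kappa$ alone is wrong: the correct value, $\eta_1=\frac{c_1\kappa^{-1}q(1+q^{-2n-2})-c_2\kappa q^{2n}}{q-q^{-1}}$ as in Corollary \ref{cor:rep_B_basisw}, genuinely depends on $n$. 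There is also a circularity: your inductive formula for $\mu_i$ needs $\eta_1$ as input, while your value of $\eta_1$ implicitly presupposes $\mu_1$.

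This matters because the final step of your argument --- that the $\mu_i$ are functions of $\kappa$ and $n$ only --- rests on that false determination of $\eta_1$; if $\eta_1$ were a free third parameter the proposition would fail. The repair is the one the paper uses: since $K$, $C_1$, $C_2$ commute and, by \eqref{uqsl3_eqn_comm_relations_cartan}, $C_1$ and $C_2$ preserve both $\ker\tau(B_1)$ and the $\kappa$-eigenspace of $\tau(K)$, one may \emph{choose} $w_0$ to be a joint eigenvector and carry $\eta_1$ through the induction as an unknown scalar; the invariance of $\mathrm{span}\{w_0,\dots,w_n\}$, and hence the equality $W=\mathrm{span}\{w_0,\dots,w_n\}$ by irreducibility, does not need its value. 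Then $\eta_1$ is determined a posteriori from the termination condition $\tau(B_2)w_n=0$: evaluating $\tau(C_1)w_n=q^{-n}\eta_1\,w_n$ via \eqref{uqsl3_eqn_C1C2}, where now $\tau(B_1B_2)w_n=0$ and $\tau(B_2B_1)w_n=\mu_n w_n$ with $\mu_n$ already expressed through $\eta_1$, yields one equation fixing $\eta_1$ in terms of $\kappa$ and $n$. With that step inserted, your proof closes and coincides with the paper's.
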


\begin{proof}
Let $\kappa \in \C^\times$ be the highest weight of $\tau$ and let $w_0$ be a highest weight vector, i.e. $\tau(K) \, w_0 = \kappa \,  w_0$ and $\tau(B_1)w_0=0$.  Since $\tau(K), \tau(C_1)$ and $\tau(C_2)$ form a commuting family of operators, we can assume that $\tau(C_1) \, w_0 = \eta_1 \, w_0$ and $\tau(C_2) \, w_0 = \eta_2 \, w_0$.
For every $i\in \N$, we define the vector $w_i = \tau(B_2)^i w_0 \in W$. Since $W$ is finite-dimensional, there exists $n \in \N$ such that $w_i \neq 0$ for $0 \leq i \leq n$ and $w_{n+1} = 0$. It follows from \eqref{uqsl3_eqn_comm_relations_cartan} that $\tau(K) w_i = q^{3i}\, \kappa \,  w_i$, so that $(w_i)_{i=0}^n$ is a set of linearly independent vectors since they are eigenvectors of $\tau(K)$ for different eigenvalues. Moreover \eqref{uqsl3_eqn_comm_relations_cartan}  implies
$$\tau(C_1)\, w_i = \tau(C_1) \tau(B_2)^i \, w_0 = q^{-i} \, \tau(B_2)^i \tau(C_1) \,w_0 = \eta_1q^{-i} \, w_i,$$
and similarly $\tau(C_2)\, w_i = \eta_2q^{i}\, w_i$. We will show that it is indeed a basis of $W$.

We prove by induction in $i$ that there exist $b_i\in \mathbb{C}$ such that $\tau(B_1) w_i = b_i w_{i-1}$ for $i=0,\ldots,n$. The statement holds for $i=0$ taking $b_0=0$ since $w_0$ is a highest weight vector. Let $i > 0$ and assume that $\tau(B_1) w_j = b_j w_{j-1}$ for  for all $j < i$. Using \eqref{uqsl3_eqn_C1C2} we find the recurrence relation
\begin{equation*}
\begin{split}
\tau(B_1) w_i &= \tau(B_1) \tau(B_2)^i w_0 = \tau(B_1B_2)w_{i+1} \\
  &= \tau \left( C_1 + q B_2 B_1 + \frac{c_2}{(q - q^{-1})} K - \frac{(q + q^{-1})}{(q - q^{-1})} c_1 K^{-1} \right)  w_{i-1} \\
  &= q \tau(B_2 B_1) w_{i-1} +\tau \left( C_1 + \frac{c_2}{(q - q^{-1})} K - \frac{(q + q^{-1})}{(q - q^{-1})} c_1 K^{-1} \right) w_{i-1},
\end{split}
\end{equation*}
By the inductive hypothesis, $\tau(B_2 B_1) w_{i-1}=b_{i-1} \tau(B_2)w_{i-2} =b_{i-1}w_{i-1}$, so that
\begin{equation} \label{uqsl3_eqn_rec_B1.2}
\begin{split}
\tau(B_1) w_i &= \left( q b_{i-1} + q^{1-i}\eta_1 + \frac{q^{3i - 3} \kappa\, c_2}{(q - q^{-1})} - \frac{(q + q^{-1})}{(q - q^{-1})} q^{3 - 3i} \kappa^{-1}\,  c_1 \right) w_{i-1}.
\end{split}
\end{equation}
Hence $\tau(B_1) w_i = b_i w_{i-1}$. Since $\tau$ is an irreducible representation we have that $W=\tau(\mathcal{B})\, w_0 =  \langle \{ w_0,w_1,\ldots,w_n\}\rangle$, and therefore $(w_i)_{i=0}^n$ is a basis of $W$. This completes the proof of the proposition.
\end{proof}
\begin{rmk}
\label{rmk:coefficients_bj_nozero}
Since we assume $(\tau,W)$ irreducible, the coefficients $b_i$ in the proof of Proposition \ref{uqsl3_prop_C1C2mu} are non-zero for $i=1,\ldots,n$. This follows from the fact that if $b_{i_0}=0$ for some $1\leq i_0\leq n$, then $\langle \{ w_{i_0},w_{i_0+1},\ldots,w_n\}\rangle$ is an invariant subspace and this contradicts the irreducibility of $\tau$.
\end{rmk}
\begin{cor}
\label{cor:rep_B_basisw}
Let $(\tau,W)$ be a finite-dimensional irreducible representation of $\mathcal{B}$ of dimension $n+1$ and highest weight $\kappa$. let $w_0$ be a highest weight vector and let $w_i=(B_2)^iw_0$ for $i=1,\ldots,n$. Then $(w_i)_{i=0}^n$ is a basis of $W$. The action of the generators of $\mathcal{B}$ on this basis is given by
\begin{equation*}
\tau(K) \, w_j= q^{3j} \,\kappa \, w_j, \qquad \tau(B_2) \, w_j = w_{j+1},\qquad  \tau(B_1) \, w_j=b_jw_{j-1}
\end{equation*}
where
$$b_0=0,\qquad b_j= c_1 \, \kappa^{-1} \, q^{-2n-1} \, [j]_q \, \frac{(1-q^{2n-2j+2})(1+c_2c_1^{-1}\kappa^2q^{2j+2n-1})}{(q - q^{-1})}.$$
Moreover, $\tau(C_1)\, w_j = q^{-j} \, \eta_1\, w_j$ and $\tau(C_2) \, w_j = q^j \, \eta_2\, w_j$, where
\begin{equation*}
\eta_1 = \frac{c_1 \, \kappa^{-1} \, q (1 + q^{-2n-2}) - c_2 \, \kappa \, q^{2n}}{q - q^{-1}}, \quad
\eta_2 = \frac{c_2 \, \kappa \, q^{ - 1} (1 + q^{2n + 2}) - c_1 \, \kappa^{-1} \, q^{- 2n}}{q - q^{-1}}.
\end{equation*}
\end{cor}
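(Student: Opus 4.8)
The plan is to build on Proposition \ref{uqsl3_prop_C1C2mu}, which already supplies the basis $(w_i)_{i=0}^n$, the action $\tau(K)w_j = q^{3j}\kappa\, w_j$, the relation $\tau(B_2)w_j = w_{j+1}$, the tridiagonal action $\tau(B_1)w_j = b_j w_{j-1}$ together with the first-order recurrence \eqref{uqsl3_eqn_rec_B1.2} for the $b_j$, and the eigenvalue structure $\tau(C_1)w_j = q^{-j}\eta_1 w_j$, $\tau(C_2)w_j = q^{j}\eta_2 w_j$. Thus only the three closed-form expressions for $b_j$, $\eta_1$ and $\eta_2$ remain to be established.

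First I would determine $\eta_1$. Since $W$ is $(n+1)$-dimensional we have $w_{n+1} = \tau(B_2)^{n+1}w_0 = 0$, hence $0 = \tau(B_1)w_{n+1} = b_{n+1}w_n$ with $w_n \neq 0$, which forces the boundary condition $b_{n+1}=0$ to complement $b_0 = 0$. The recurrence \eqref{uqsl3_eqn_rec_B1.2} is linear and first order, so the substitution $b_i = q^{i}\tilde b_i$ telescopes it into a sum of three geometric series in $i$; carrying out this sum expresses $b_i$ as an explicit affine function of the single unknown $\eta_1$. Imposing $b_{n+1}=0$ is then one linear equation, which pins down $\eta_1$ and, after simplification, yields the stated value $\eta_1 = \bigl(c_1\kappa^{-1}q(1+q^{-2n-2}) - c_2\kappa\, q^{2n}\bigr)/(q-q^{-1})$.

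Rather than grind the geometric sums directly into the factored form, I would verify the claimed formula for $b_j$ as an ansatz. Concretely, I would check that the proposed $b_j$ satisfies $b_0=0$ (immediate from $[0]_q=0$), satisfies $b_{n+1}=0$ (immediate from the factor $1-q^{2n-2j+2}$ vanishing at $j=n+1$), and satisfies \eqref{uqsl3_eqn_rec_B1.2} with the $\eta_1$ found above; since a first-order recurrence with a prescribed initial value has a unique solution, these checks identify $b_j$ completely. Finally I would obtain $\eta_2$ directly from the defining relation \eqref{uqsl3_eqn_C1C2}: evaluating $\tau(C_2)w_0$ using $\tau(B_1)w_0=0$, $\tau(B_2B_1)w_0=0$, $\tau(B_1B_2)w_0=b_1 w_0$ and $\tau(K^{\pm1})w_0=\kappa^{\pm1}w_0$ gives $\eta_2 = -q\,b_1 - c_1\kappa^{-1}/(q-q^{-1}) + (q+q^{-1})c_2\kappa/(q-q^{-1})$, into which the value $b_1 = c_1\kappa^{-1}q^{-2n-1}(1-q^{2n})(1+c_2c_1^{-1}\kappa^2 q^{2n+1})/(q-q^{-1})$ is substituted and simplified.

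The main obstacle is purely algebraic: reconciling the raw output of the telescoped geometric sums with the compact product form for $b_j$, and collapsing the boundary-condition equation down to the tidy expression for $\eta_1$. The ansatz-verification route circumvents the first of these difficulties, reducing the entire corollary to a finite sequence of direct identities among powers of $q$.
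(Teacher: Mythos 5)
Your proposal is correct and follows essentially the same route as the paper: the paper likewise derives $b_j$ from the first-order recurrence \eqref{uqsl3_eqn_rec_B1.2} (by induction, in the intermediate form $b_j=[j]_q\bigl(\eta_1+\cdots\bigr)$) and then pins down $\eta_1$ by the boundary condition at the top of the string, which in the paper appears as the equation $\tau(C_1)w_n=q^{-n}\eta_1 w_n$ combined with $\tau(B_1B_2)w_n=0$ --- exactly your condition $b_{n+1}=0$ in rearranged form. Your only deviations are cosmetic: verifying the factored formula for $b_j$ as an ansatz rather than deriving it, and extracting $\eta_2$ from $\tau(C_2)w_0$ (via $b_1$) instead of from $\tau(C_2)w_n$.
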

\begin{proof}
The fact that $(w_i)_{i=0}^n$ is a basis of $W$ and the action of $\tau(K)$ on $w_j$ follow directly from the proof of Proposition \ref{uqsl3_prop_C1C2mu}.
It is clear that $b_0=0$. We now show that
\begin{equation}
\label{eq:formula_bj_eta1}
b_j= [j]_q \left( \eta_1 + \frac{c_2 \, \kappa \, q^{2j - 2} - c_1 \, \kappa^{-1} \, q^{1 - 2j} (1 + q^{2j})}{(q - q^{-1})} \right),
\end{equation}
for all $j=1,\ldots,n$. We proceed by induction on $i$. If $i=1$, then the statement follows directly from \eqref{uqsl3_eqn_rec_B1.2}. Now we assume that \eqref{eq:formula_bj_eta1} is true for some $j$, $1<j\leq n$. Then it follows from \eqref{uqsl3_eqn_rec_B1.2}  and the inductive hypothesis that
\begin{multline*}
b_j = q \, [j-1]_q \left( \eta_1 + \frac{c_2\, \kappa \, q^{2j - 4} - c_1\, \kappa^{-1} \, q^{3 - 2j} (1 + q^{2j-2})}{(q - q^{-1})} \right) \\
+ q^{1-j}\eta_1 + \frac{q^{3j - 3} \kappa\, c_2}{(q - q^{-1})} - \frac{(q + q^{-1})}{(q - q^{-1})} q^{3 - 3j} \kappa^{-1}\,  c_1.
\end{multline*}
Now \eqref{eq:formula_bj_eta1} follows by a straightforward computation.

It follows from the proof of Proposition \ref{uqsl3_prop_C1C2mu} that $\tau(C_1)\, w_j = q^{-j} \, \eta_1\, w_j$ where $\eta_1$ is the eigenvalue for the highest weight vector $w_0$. From the construction of the vectors $w_i$ in Proposition \eqref{uqsl3_prop_C1C2mu}, it follows that $\tau(B_2)\, w_n=0$. Hence \eqref{uqsl3_eqn_C1C2} and \eqref{eq:formula_bj_eta1} yield
\begin{align*}
q^{-n}\eta_1\, w_n = \tau(C_1) \, w_n &= q\tau(B_2B_1) \, w_n -\frac{1}{q-q^{-1}} c_2\, \tau(K)w_n + \frac{q+q^{-1}}{q-q^{-1}} \,  c_1 \, \tau(K^{-1})w_n \\
&= -\frac{q^{n+1}-q^{-n+1}}{q-q^{-1}} \eta_1 -\frac{q^{n+1}-q^{-n+1}}{q-q^{-1}}\left( \frac{ c_2\, \kappa\, q^{2n-2} - c_2 \, \kappa^{-1} \, q^{1-2n}(1+q^{2n}) }{q-q^{-1}}\right) \\
& \qquad \qquad -\frac{c_2\,\kappa\, q^{3n}}{q-q^{-1}} + \frac{(q+q^{-1})\, c_1\, \kappa^{-1} \, q^{-3n}}{q-q^{-1}}.
\end{align*}
Now the expression of $\eta_1$ follows by a straightforward computation. The expression of $\eta_2$ can be obtained similarly from the action of $C_2$ on $w_n$.
\end{proof}
\begin{rmk}
\label{rmk:tau_irred_kappa_neq}
If $\tau$ is an irreducible representation with highest weight $\kappa$ and dimension $n+1$, it follows from Remark \ref{rmk:coefficients_bj_nozero}  and the explicit expression of the coefficient $b_i$ in Corollary \ref{cor:rep_B_basisw}  that $c_2c_1^{-1}\kappa^2\neq -q^{-2j-2n+1}$ for all $j=1,\ldots,n$.
\end{rmk}

\begin{rmk}
\label{rmk:tau_determined_kappa_eta_1}
It follows from Proposition \ref{uqsl3_prop_C1C2mu} and Corollary \ref{cor:rep_B_basisw} that a finite-dimensional irreducible representation $(\tau,W)$ of $\mathcal{B}$ is completely determined by the highest weight $\kappa$ and the eigenvalue of $\eta_1$ of the highest weight vector as eigenvector of $\tau(C_1)$.
\end{rmk}

\begin{cor}
\label{cor:caracterizations_irreps_B}
Every irreducible finite-dimensional representation of $\mathcal{B}$ is determined by a pair $(\kappa,n)$ where $\kappa$ is the highest weight and the dimension is $n+1$. Conversely, to each pair $(\kappa,n)$ with $\kappa\in \mathbb{C}^\times$, $n\in \N$ and $\kappa^2 \notin -c_1c_2^{-1}q^{1-\N}$,  there corresponds an irreducible representation $(\tau_{(\kappa,n)},W_{(\kappa,n)})$ with highest weight $\kappa$ and dimension $n+1$.
\end{cor}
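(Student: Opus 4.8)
The plan is to deduce the two halves of the statement from the structural results already proved for $\mathcal{B}$-representations, treating them separately.

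\emph{The forward direction.} Given an irreducible $(\tau,W)$, the discussion preceding Proposition \ref{uqsl3_prop_C1C2mu} produces a highest weight vector $w_0$ with $\tau(K)w_0 = \kappa w_0$ and $\tau(B_1)w_0 = 0$; together with $n+1 := \dim W$ this gives the pair $(\kappa,n)$. I would first check that $\kappa$ is a genuine invariant: in the basis of Corollary \ref{cor:rep_B_basisw} one has $\tau(B_1)\bigl(\sum_j a_j w_j\bigr) = \sum_{j\ge 1} a_j b_j w_{j-1}$, which vanishes only if $a_1 = \dots = a_n = 0$, since $b_1,\dots,b_n \ne 0$ by Remark \ref{rmk:coefficients_bj_nozero}; hence the highest weight line is unique and $\kappa$ is well defined. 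That $(\kappa,n)$ conversely determines $\tau$ up to isomorphism is exactly the content of Proposition \ref{uqsl3_prop_C1C2mu} sharpened by the explicit action in Corollary \ref{cor:rep_B_basisw}.

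\emph{The converse direction.} Here I would construct $\tau_{(\kappa,n)}$ by hand on a space $W_{(\kappa,n)}$ with basis $w_0,\dots,w_n$, setting $\tau(K)w_j = q^{3j}\kappa\, w_j$, $\tau(B_2)w_j = w_{j+1}$ with $w_{n+1}:=0$, and $\tau(B_1)w_j = b_j w_{j-1}$ with $b_0:=0$ and $b_j$ the closed formula of Corollary \ref{cor:rep_B_basisw} for $1\le j\le n$; the factor $(1-q^{2n-2j+2})$ there forces $b_{n+1}=0$, the compatibility needed at the top of the ladder. One then has to verify the defining relations of $\mathcal{B}$. The relations $KB_1 = q^{-3}B_1 K$ and $KB_2 = q^3 B_2 K$ are immediate from the eigenvalue pattern. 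Evaluating the first Serre relation of \eqref{eq:serre_relations_B} on $w_j$, both sides acquire a common factor $b_j$ and the identity reduces to the three-term recurrence
\begin{equation*}
b_{j+1} - [2]_q b_j + b_{j-1} = [2]_q\bigl(c_2\kappa\, q^{3j-2} + c_1\kappa^{-1} q^{1-3j}\bigr),
\end{equation*}
while evaluating the second Serre relation on $w_j$ gives exactly this recurrence with $j$ replaced by $j+1$. Thus both cubic relations reduce to the single linear recurrence above, which the explicit $b_j$ satisfy — a fact one reads off from their closed form, or equivalently from the first-order recurrence \eqref{uqsl3_eqn_rec_B1.2} — and the boundary instances $j=0$, $j=n$ are absorbed by $b_0=0$ and $b_{n+1}=0$.

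\emph{Irreducibility and the genericity condition.} Since $q\in(0,1)$ the eigenvalues $q^{3j}\kappa$ of $\tau(K)$ are pairwise distinct, so every invariant subspace is a sum of the lines $\C w_j$. A nonzero invariant subspace contains some $w_j$, and applying $\tau(B_1)$ and $\tau(B_2)$ moves one down to $w_0$ and back up through all the $w_i$ as long as $b_1,\dots,b_n \ne 0$, forcing the subspace to be all of $W_{(\kappa,n)}$. In the closed form of $b_j$ the only factor that can vanish for $1\le j\le n$ is $(1+c_2 c_1^{-1}\kappa^2 q^{2j+2n-1})$, which is nonzero exactly when $\kappa^2 \ne -c_1 c_2^{-1} q^{1-2j-2n}$; since $\{1-2j-2n : 1\le j\le n\}\subset 1-\N$, the hypothesis $\kappa^2\notin -c_1 c_2^{-1} q^{1-\N}$ secures all of these, giving irreducibility. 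I expect the relation check in the converse to be the main obstacle: the two Serre relations look like independent cubic constraints, and the point is to recognise that each collapses to the one linear recurrence already built into the $b_j$, with the vanishing boundary values $b_0$ and $b_{n+1}$ doing precisely the work of closing up the finite-dimensional module.
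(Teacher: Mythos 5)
Your proposal is correct. The forward direction coincides with the paper's argument: the paper's proof of this corollary is a one-line citation of Proposition \ref{uqsl3_prop_C1C2mu}, Corollary \ref{cor:rep_B_basisw} and Remark \ref{rmk:tau_irred_kappa_neq}, and your uniqueness-of-the-highest-weight-line observation (via $b_1,\dots,b_n\neq 0$) is exactly the content of Remark \ref{rmk:coefficients_bj_nozero}. Where you genuinely diverge is the converse (existence) half: the paper leaves it implicit that the formulas of Corollary \ref{cor:rep_B_basisw} actually define a representation, whereas you construct $\tau_{(\kappa,n)}$ by hand and verify the relations. Your verification is sound: applying the first relation of \eqref{eq:serre_relations_B} to $w_j$ gives $b_j\bigl(b_{j+1}-[2]_q b_j+b_{j-1}\bigr)w_{j-1}$ on the left and $[2]_q b_j\bigl(c_2\kappa q^{3j-2}+c_1\kappa^{-1}q^{1-3j}\bigr)w_{j-1}$ on the right, the second relation gives the same identity shifted by one, and this second-order recurrence is precisely what one gets by eliminating $\eta_1$ from two consecutive instances of \eqref{uqsl3_eqn_rec_B1.2}; the boundary cases are indeed closed off by $b_0=0$ and by the factor $(1-q^{2n-2j+2})$ forcing $b_{n+1}=0$. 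Your irreducibility argument via the distinct $K$-eigenvalues and the non-vanishing of $(1+c_2c_1^{-1}\kappa^2 q^{2j+2n-1})$ matches Remark \ref{rmk:tau_irred_kappa_neq}. The one point you should make explicit is that your relation check only establishes a representation of $\mathcal{B}$ if the $K$-commutation relations together with \eqref{eq:serre_relations_B} form a \emph{complete} presentation of $\mathcal{B}$; this is not proved in the paper but is exactly what the cited result \cite[Lemma 7.2, Theorem 7.4]{Kol14} supplies, so a sentence invoking it would close the argument. What your route buys is an unconditional existence proof for each admissible pair $(\kappa,n)$, independent of whether such a representation has already been exhibited inside some $V_\lambda$; the paper's terser route buys brevity at the cost of leaving that existence step to the reader.
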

\begin{proof}
It follows directly from Proposition \ref{uqsl3_prop_C1C2mu},  Corollary \ref{cor:rep_B_basisw} and Remark \ref{rmk:tau_irred_kappa_neq}.
\end{proof}

\begin{prop}
Assume that $\kappa\in\mathbb{R}^{\times}$ and $c_1\overline c_2=q^3$. Let $(\tau,W)$ be an irreducible finite-dimensional representation of $\mathcal{B}$. Then $\tau$ is unitarizable.
\end{prop}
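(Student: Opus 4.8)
The plan is to construct an explicit invariant Hermitian form on $W$ using the basis $(w_j)_{j=0}^n$ of Corollary \ref{cor:rep_B_basisw}, and to show it is positive definite. Recall that under the hypothesis $c_1\overline{c_2}=q^3$ (which also forces $\overline{c_1}c_2=q^3$, since $q$ is real) the $\ast$-structure on $\mathcal{B}$ is $B_1^\ast=-\overline{c_1}K^{-1}B_2$, $B_2^\ast=-\overline{c_2}KB_1$ and $K^\ast=K$. I would declare the weight vectors $w_j$ to be mutually orthogonal and put $h_j=\langle w_j,w_j\rangle$ with $h_0=1$, extending sesquilinearly (conjugate-linear in the second variable) to a form on $W$. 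Orthogonality of distinct $w_j$ is the natural choice here: since $\tau(K)w_j=q^{3j}\kappa\,w_j$ with $\kappa\in\mathbb{R}^\times$ and $0<q<1$, the $w_j$ are eigenvectors of the prospective self-adjoint operator $\tau(K)$ for pairwise distinct real eigenvalues.

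The form is then determined by the scalars $h_j$, and I would fix them by imposing $\ast$-compatibility for $B_1$, namely $\langle \tau(B_1)w_j,w_{j-1}\rangle=\langle w_j,\tau(B_1^\ast)w_{j-1}\rangle$. Using $\tau(B_1)w_j=b_jw_{j-1}$ together with $\tau(B_1^\ast)w_{j-1}=-\overline{c_1}\tau(K^{-1})\tau(B_2)w_{j-1}=-\overline{c_1}q^{-3j}\kappa^{-1}w_j$, this yields the recurrence
\[
h_j=-\frac{b_j\,\kappa\,q^{3j}}{c_1}\,h_{j-1},\qquad j=1,\dots,n.
\]
The heart of the argument is to show the coefficient is a positive real number. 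Substituting the explicit $b_j$ from Corollary \ref{cor:rep_B_basisw} and simplifying, the coefficient equals
\[
-q^{3j-2n-1}\,[j]_q\,\frac{(1-q^{2n-2j+2})\,(1+c_2c_1^{-1}\kappa^2 q^{2j+2n-1})}{q-q^{-1}}.
\]
The key observation is that $c_1\overline{c_2}=q^3$ gives $c_2c_1^{-1}=q^3/|c_1|^2>0$, so $1+c_2c_1^{-1}\kappa^2q^{2j+2n-1}$ is a sum of positive terms; combined with $[j]_q>0$, with $1-q^{2n-2j+2}>0$ in the range $1\le j\le n$, and with $q-q^{-1}<0$, the whole coefficient is positive. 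Hence all $h_j>0$ and the form is positive definite.

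It then remains to verify that this form makes $\tau$ a $\ast$-representation, i.e. $\langle \tau(X)v,w\rangle=\langle v,\tau(X^\ast)w\rangle$ for $X\in\{K,B_1,B_2\}$; by multiplicativity of $\ast$ (and $(K^{-1})^\ast=K^{-1}$) this extends to all of $\mathcal{B}$. The identities for $K$ and $B_1$ hold by construction. For $B_2$, using $\tau(B_2^\ast)w_{j+1}=-\overline{c_2}\,b_{j+1}q^{3j}\kappa\,w_j$, the relation reduces to $h_{j+1}=-c_2\overline{b_{j+1}}q^{3j}\kappa\,h_j$, which agrees with the $B_1$-recurrence precisely when $c_1c_2\overline{b_{j+1}}=q^3 b_{j+1}$. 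I expect this compatibility to be the main point to watch, but it follows from the same reality input: the explicit formula shows $b_j/c_1$ is real, so $\overline{c_1}b_j$ is real, and the identity $c_1c_2\overline{b_j}=q^3b_j$ is then equivalent to $\overline{c_1}c_2=q^3$, which holds. Thus the two recurrences coincide, the constructed inner product is $\mathcal{B}$-invariant and positive definite, and $\tau$ is unitarizable.
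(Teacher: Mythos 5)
Your proposal is correct and follows essentially the same route as the paper: both declare the $K$-eigenvectors $w_j$ orthogonal, determine the norms $h_j=\langle w_j,w_j\rangle$ from the $\ast$-structure $B_1^\ast=-\overline{c_1}K^{-1}B_2$, $B_2^\ast=-\overline{c_2}KB_1$ (the paper writes the closed product $\langle w_k,w_k\rangle=\langle\tau(((B_2)^k)^\ast(B_2)^k)w_0,w_0\rangle$ where you use the equivalent one-step recurrence), and deduce positivity from the same key fact $c_2c_1^{-1}=q^3/|c_1|^2>0$ forced by $c_1\overline{c_2}=q^3$. Your explicit check that the $B_1$- and $B_2$-recurrences for $h_j$ are compatible is the content the paper compresses into ``a straightforward verification on the generators,'' so nothing is missing.
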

\begin{proof}
Since $c_1 \overline{c_2} = q^3$, we have that $\mathcal{B}^\ast=\mathcal{B}$. More precisely $B_1^\ast = -\overline{c_1} K^{-1} B_2$, $B_2^\ast = -\overline{c_2} K B_1$ and $K^\ast = K$. Let $(w_i)_{i = 0}^n$ be the basis of $W$ given in Corollary \ref{cor:rep_B_basisw} and let $\langle \cdot , \cdot \rangle$ be the hermitian bilinear form defined on the basis elements by $\langle w_0,w_0\rangle = 1$,
$$\langle w_k, w_k \rangle = \langle \tau(((B_2)^k)^\ast (B_2)^k) w_0, w_0 \rangle, \qquad \langle w_i,w_j \rangle = 0, \quad i\neq j.$$
Observe that
\begin{align}
\langle w_k, w_k \rangle &= \langle \tau(((B_2)^k)^\ast (B_2)^k) w_0, w_0 \rangle \nonumber \\
  &= (-1)^k \overline{c_2}^k q^{3\binom{k}{2}} \langle \tau(K^k (B_1)^k (B_2)^k) w_0, w_0 \rangle = (-1)^k \overline{c_2}^k q^{3\binom{k}{2}} \, \langle \tau(K^k)  w_0, w_0 \rangle \, \prod_{i=1}^k b_i \nonumber \\
  \label{eq:norm_wk}
  &=\frac{q^{3\binom{k}{2} -k(2n - 1)} }{(1-q^2)^k}\, [k]_q! \, (q^{2n};q^{-2})_k \, (-c_2c_1^{-1}\, \kappa^2\, q^{2n-1};q^2)_k \, \langle w_0,w_0\rangle.
\end{align}
Since $q^3 c_2 c_1^{-1} = c_1 \overline{c_2} c_2 c_1^{-1} = |c_2|^2 > 0$, it follows that $c_2 c_1^{-1} > 0$ and thus \eqref{eq:norm_wk} is positive. Therefore $\langle \cdot, \cdot \rangle$ is a positive definite bilinear form. Moreover,  $\langle \tau(X) \,  w_i , w_j \rangle  = \langle w_i, \tau(X^\ast) \, w_j \rangle$ for all $X\in \mathcal{B}$. This follows from a straightforward verification on the generators of $\mathcal{B}$.
\end{proof}

\begin{rmk}
Let $\kappa \in \R^{\times}$ and $n \in \N$.
Let $(w_i)_{i = 0}^n$ be the orthogonal basis for $W^{(\mu,n)}$ as in Corollary \ref{cor:rep_B_basisw}.
We define an orthonormal basis $(\widetilde{w}_i)_{i=0}^n$ by $\widetilde{w}_i = w_i / ||w_i||$.
The actions of $C_1$, $C_2$ and $K$ on the orthonormal basis are the same. For $B_1$ and $B_2$ we have
\begin{equation*}
\begin{split}
\tau_{(\kappa,n)}(B_1) \widetilde{w}_{i} &= -c_1 \,\kappa^{-1} \, q^{- 2i - n + 1} \sqrt{
  \frac{(1 - q^{2i})}{(1 - q^2)}
  \frac{(1 - q^{2n - 2i + 2})}{(1 - q^2)}
  (q + c_2 c_1^{-1} \, \kappa^2 \, q^{2n + 2i})
} \widetilde{w}_{i-1}, \\
\tau_{(\kappa,n)}(B_2) \widetilde{w}_{i} &= q^{i-n+1} \sqrt{
  \frac{(1 - q^{2i + 2})}{(1 - q^2)}
  \frac{(1 - q^{2n - 2i})}{(1 - q^2)}
  (q + c_2 c_1^{-1} \, \kappa^2 \,q^{ 2n + 2i + 2})
} \widetilde{w}_{i+1}.
\end{split}
\end{equation*}
\end{rmk}

\section{The branching rule}
\label{sec:branching-rule}

In this section we prove the main theorem of the paper. We fix a coideal subalgebra $\mathcal{B}$ and show that any finite-dimensional representation of $\Uq$ restricted to $\mathcal{B}$ decomposes multiplicity free as finite-dimensional representations of $\mathcal{B}$ and we characterise the $\mathcal{B}$-representations that occur in this decomposition.
In case $\mathcal{B}$ is $\ast$-invariant, every finite-dimensional irreducible representation of $\Uq$ restricted to $\mathcal{B}$ obviously decomposes into finite-dimensional irreducible representations. This fact is also noted by Letzter \cite[Theorem 3.3]{Let00}.

\begin{thm} \label{thm:branching}
Let $\lambda \in P^+$ such that $\lambda = \lambda_1 \varpi_1 + \lambda_2 \varpi_2$ and fix the finite-dimensional irreducible representation $\pi_{\lambda}$ of $\Uq$ on the vector space $V_{\lambda}$. Let $\mathcal{B}$ be a coideal subalgebra with $c_2c_1^{-1}\notin -q^{2\lambda_1+2\lambda_2+1-\mathbb{N}}$. The representation $\pi_{\lambda}$ restricted to $\mathcal{B}$ decomposes multiplicity free into irreducible representations;
\begin{equation*}
\pi_{\lambda} |_{\mathcal{B}} \simeq \bigoplus_{(\kappa, n)} \tau_{(\kappa, n)}, \qquad V_{\lambda} = \bigoplus_{(\kappa, n)} W_{(\kappa, n)},
\end{equation*}
where the sum is taken over $(\kappa, n) = (q^{\lambda_1 -\lambda_2-3i},i+x)$, with $0\leq i \leq \lambda_1$ and $0\leq x \leq \lambda_2$.
\end{thm}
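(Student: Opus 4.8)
The plan is to prove the decomposition constructively, by exhibiting inside $V_\lambda$ a complete set of $\mathcal{B}$-highest weight vectors and reading off the invariants $(\kappa,n)$ of the summand each generates. By Corollary~\ref{cor:caracterizations_irreps_B} a finite-dimensional irreducible $\mathcal{B}$-module is determined by its highest weight $\kappa$ and its dimension $n+1$, and by Remark~\ref{rmk:tau_determined_kappa_eta_1} equivalently by the pair $(\kappa,\eta_1)$ consisting of the $K$- and $C_1$-eigenvalues of a highest weight vector, with $n$ recovered from the explicit formula for $\eta_1$ in Corollary~\ref{cor:rep_B_basisw}. Since a highest weight vector is precisely a nonzero $w$ with $B_1 w=0$, the task is to determine $\ker B_1$, to split it into joint eigenspaces of the commuting family $K,C_1,C_2$, and to match the resulting $(\kappa,n)$ with the asserted list.

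First I would diagonalise $K=K_1K_2^{-1}$: by Proposition~\ref{prop:actiongeneratorsinbasis}\eqref{prop:gen_1},\eqref{prop:gen_2} it acts on $F_2^k\hat F_3^l F_1^m v_\lambda$ by $q^{\lambda_1-\lambda_2+3(k-m)}$, so $V_\lambda=\bigoplus_{D=-\lambda_1}^{\lambda_2} V_\lambda[D]$, where $V_\lambda[D]$ is spanned by the basis vectors of Theorem~\ref{thm:basisVla} with $k-m=D$; a count of admissible $(k,l,m)$ gives its dimension. The relations \eqref{uqsl3_eqn_comm_relations_cartan} show $B_1 V_\lambda[D]\subseteq V_\lambda[D-1]$ and $B_2 V_\lambda[D]\subseteq V_\lambda[D+1]$. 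Because $C_1B_1=qB_1C_1$ and $[C_1,K]=0$, the operator $C_1$ preserves each space $\mathcal{H}_D:=\ker B_1\cap V_\lambda[D]$; moreover $B_2B_1$ annihilates $\ker B_1$, so by \eqref{uqsl3_eqn_C1C2} the action of $C_1$ on $\mathcal{H}_D$ equals that of $B_1B_2$ up to the scalar $\tfrac{q+q^{-1}}{q-q^{-1}}c_1\kappa^{-1}-\tfrac{1}{q-q^{-1}}c_2\kappa$, where $\kappa=q^{\lambda_1-\lambda_2+3D}$.

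The core is then to compute and diagonalise $B_1B_2$ on the spaces $\mathcal{H}_D$. I would begin with the extreme weight $D=-\lambda_1$, where $k=0$, $m=\lambda_1$ are forced and $\mathcal{H}_{-\lambda_1}=V_\lambda[-\lambda_1]$ carries the one-parameter basis $\{\hat F_3^l F_1^{\lambda_1}v_\lambda\}_{l=0}^{\lambda_2}$. Using Proposition~\ref{prop:actiongeneratorsinbasis} one checks that $B_1B_2$ is tridiagonal in $l$, so the eigenvalue equation for $C_1$ becomes a three-term recurrence on $\{0,\dots,\lambda_2\}$, which I would identify with the recurrence for the dual $q$-Krawtchouk polynomials; their finite orthogonality produces $\lambda_2+1$ explicit highest weight vectors, whose $C_1$-eigenvalues feed the $\eta_1$-formula to give the dimensions $n=\lambda_1+x$, $x=0,\dots,\lambda_2$ --- that is, the $i=\lambda_1$ row of the list. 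The remaining rows I would obtain by downward induction on $i$: granting the summands with $j>i$, their vectors lying in $V_\lambda[-i]$ are the $B_2$-images of vectors already constructed in $V_\lambda[-i-1]$ and span a subspace $U$ meeting $\ker B_1$ trivially (the coefficients $b_j$ are nonzero by Remark~\ref{rmk:coefficients_bj_nozero}), so $V_\lambda[-i]=U\oplus\mathcal{H}_{-i}$ with $\dim\mathcal{H}_{-i}=\lambda_2+1$; diagonalising $C_1$ on $\mathcal{H}_{-i}$ again through a tridiagonal, dual $q$-Krawtchouk computation yields the $i$-th row, with $\kappa=q^{\lambda_1-\lambda_2-3i}$ and $n=i+x$.

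To finish, each constructed vector generates an irreducible $\tau_{(\kappa,n)}$ by Corollary~\ref{cor:rep_B_basisw}, the pairs $(\kappa,n)=(q^{\lambda_1-\lambda_2-3i},\,i+x)$ with $0\le i\le\lambda_1$, $0\le x\le\lambda_2$ are pairwise distinct so the sum is multiplicity free, and the identity $\sum_{i=0}^{\lambda_1}\sum_{x=0}^{\lambda_2}(i+x+1)=\tfrac12(\lambda_1+1)(\lambda_2+1)(\lambda_1+\lambda_2+2)=\dim V_\lambda$ shows the summands exhaust $V_\lambda$ and hence that the restriction is completely reducible with exactly this decomposition; the hypothesis $c_2c_1^{-1}\notin -q^{2\lambda_1+2\lambda_2+1-\mathbb{N}}$ guarantees, via Remark~\ref{rmk:tau_irred_kappa_neq} and Corollary~\ref{cor:caracterizations_irreps_B}, that every pair $(\kappa,n)$ arising here is admissible, so the modules are genuinely irreducible of the stated dimension. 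I expect the main obstacle to be the explicit tridiagonalisation away from the extreme level: at $D=-\lambda_1$ the kernel is automatic and the recurrence is a clean one-variable problem, but for $i<\lambda_1$ the space $V_\lambda[-i]$ is indexed by both $m$ and $l$, $B_1$ is a genuine two-variable difference operator, and isolating $\mathcal{H}_{-i}$ while verifying that $C_1$ stays tridiagonal there --- so that dual $q$-Krawtchouk polynomials continue to apply --- is the delicate, calculation-heavy step, with the normalisations $H_{k,l,m}$ and the coefficients $a_k,b_k,\alpha_k,\beta_k,\eta_k$ of Proposition~\ref{prop:actiongeneratorsinbasis} to be tracked throughout.
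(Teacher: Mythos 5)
Your overall strategy coincides with the paper's: the spaces you call $V_\lambda[-i]$ for $0\le i\le\lambda_1$ are exactly the subspaces $U_i$ of \eqref{eq:definition_U_i}, the highest weight vectors are sought in $\ker\pi_\lambda(B_1)|_{U_i}$, on that kernel $C_1$ reduces to $B_1B_2$ plus a scalar, the resulting three-term recurrence is matched with dual $q$-Krawtchouk polynomials, and the argument closes with the same dimension count. Your base case $i=\lambda_1$ (where the whole weight space is killed by $B_1$), your observation that $C_1$ preserves $\ker B_1$ because $C_1B_1=qB_1C_1$, and your remark that the $B_2$-images of the previously constructed summands give a complement of $\ker B_1$ in $U_i$ of the right codimension are all correct.

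The genuine gap is the one you flag yourself: for $i<\lambda_1$ you exhibit no basis of $\ker B_1|_{U_i}$ in which $C_1$ is demonstrably tridiagonal, and the inductive description of $\mathcal{H}_{-i}$ as a complement of the span of $B_2$-images does not produce one. This cannot be waved through ``by analogy'' with the extreme level: the paper must construct the kernel basis $(u_n^i)_n$ explicitly by solving the recurrence of Proposition~\ref{prop:kernel_B1}, normalise each $u_n^i$ by its unique top-layer component $F_2^{\lambda_1-i}\hat{F}_3^nF_1^{\lambda_1}v_\lambda$ (Remark~\ref{rmk:form_ui}), and then obtain tridiagonality of $B_1B_2$ in Lemma~\ref{lem:action_B1B2_KerB1} by pairing against the top layer, using the orthogonality of Mudrov's basis from Theorem~\ref{thm:basisVla} together with the explicit next-layer coefficients \eqref{eq:expression_gammas}. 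Note that $(u_n^i)_n$ is itself not orthogonal (Remark~\ref{rmk:ui_not_orthogonal}), so tridiagonality is not forced by any self-adjointness and really has to be computed. Without this step (or a substitute for it) the identification with dual $q$-Krawtchouk polynomials, and hence the list of pairs $(\kappa,n)$ for $i<\lambda_1$, remains unproved; the rest of your outline is sound and matches the paper's proof.
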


The proof of Theorem  \ref{thm:branching} will be carried out in the next subsections. If $(\tau_{(\kappa,n)},W_{(\kappa,n)})$ is a representation of $\mathcal{B}$ that occurs in the representation $\pi_\lambda$ upon restriction to $\mathcal{B}$ then a highest weight vector $w_0^{(\mu,n)}$ for $\tau_{(\kappa,n)}$ is completely determined by the highest weight $\kappa$ and the eigenvalue $\eta_1$,
see Remark \ref{rmk:tau_determined_kappa_eta_1}. Hence, highest weight vectors for $\mathcal{B}$-representations in $V_\lambda$ are the eigenvectors of $\pi_\lambda(C_1)$ that belong to the kernel of $\pi_\lambda(B_1)$. In Subsection \ref{subsec:KerB1} we determine the kernel of $\pi_\lambda(B_1)$.
\begin{rmk}
\label{rmk_Serre_imply_ker}
Observe that the Serre relations \eqref{eq:serre_relations_B}  for $\mathcal{B}$ imply that
the kernel of $\pi_\lambda(B_1)$ is invariant under the action of $B_1B_2$ and thus under the action of $C_1$.
\end{rmk}
In Subsection \ref{subsec:diagC_1} we diagonalize the restriction of $\pi_\lambda(C_1)$ to $\ker(\pi_\lambda(B_1))$. In most of the proofs we identify $\pi_\lambda(X)$, $X\in \Uq$, with $X$.

\begin{rmk}
The restriction on $c_1$ and $c_2$ in Theorem \ref{thm:branching} is assumed in order to ensure the complete reducibility of $\pi_\lambda$ upon restriction to $\mathcal{B}$. This is not always true for the excluded values of $c_1$ and $c_2$. For example let $\lambda=\varpi_1$. Then $V_\lambda$ is a three dimensional vector space. Mudrov's basis in Theorem \ref{thm:basisVla} is given by
$$\mathscr{B}=\{ v_\lambda, \, F_1\, v_\lambda,\, F_2F_1\, v_\lambda  \}.$$
In this basis, the operator $C_1$ is given by the $3\times 3$ matrix
$$C_1=\begin{pmatrix} \frac{c_1q^2+c_1-qc_2)}{q(q^2-1)} & 0  & -c_1c_2 \\  0 &  \frac{x_1q^4+c_1-qc_2}{q^2-1} & 0 \\
-q &  0 & \frac{c_1q^4+c_1-q^3c_2}{q(q^2-1)} \end{pmatrix} .$$
The eigenvectors of $C_1$ are (multiples of) the vectors
$$\rho_1=\begin{pmatrix} c_1 \\ 0 \\ 1 \end{pmatrix}, \quad \rho_2 = \begin{pmatrix} 0 \\ 1 \\ 0 \end{pmatrix}, \quad
\rho_3=\begin{pmatrix} -c_2/q \\ 0 \\ 1 \end{pmatrix}.$$
If $c_1\neq -c_2/q$, then $V_\lambda$ decomposes as a sum of a two-dimensional and a one-dimensional irreducible representations of $W$:
$$V_\lambda= W_{(q,0)}\oplus W_{(q^{-2},1)},$$
where $W_{(q,0)}=\langle \{\rho_1\}\rangle$ and $W_{(q^{-2},1)}=\langle \{\rho_2,\rho_3\}\rangle$. Moreover, the highest weight vectors of $W_{(q,0)}$ and $W_{(q^{-2},1)}$ are $\rho_1$ and $\rho_2$ respectively. If we let $c_1=-c_2/q$ then the matrix $C_1$ degenerates into a non-diagonalizable matrix. The only eigenvectors are the multiples of $\rho_2$ and $\rho_3$ and therefore, although $W_{(q^{-2},1)}$ is a $\mathcal{B}$-invariant subspace of $V_\lambda$, there is no one-dimensional $\mathcal{B}$-invariant subspace in $V_\lambda$.
\end{rmk}


\subsection{The kernel of $B_1$}
\label{subsec:KerB1}

\begin{figure}[t!]
\begin{center}
\scalebox{.85}{
\begin{tikzpicture}

\coordinate (origin) at (0,0);
\coordinate (varpi1) at (1,0);
\coordinate (varpi2) at (60:1);
\coordinate (alpha1) at ($2*(varpi1) - (varpi2)$);
\coordinate (alpha2) at ($-1*(varpi1) + 2*(varpi2)$);
\coordinate (sph)    at ($(varpi1) + (varpi2)$);

\draw[->] (origin) -- (varpi1) node[above] {$\varpi_1$};
\draw[->] (origin) -- (varpi2) node[above] {$\varpi_2$};
\draw[->] (origin) -- (alpha1) node[right] {$\alpha_1$};
\draw[->] (origin) -- (alpha2) node[above] {$\alpha_2$};
\draw[->] (origin) -- (sph)    node[right] {$\alpha_1 + \alpha_2$};

\draw[gray, dashed] (origin) -- ($6*(varpi1)$)
  (origin) -- ($7*(varpi2)$);

\foreach \i in {0,...,2} {
  \node[circle, fill=black, scale=0.3] at ($3*(varpi2) - \i*(sph)$) {};
  \node[circle, fill=black, scale=0.3] at ($-3*(varpi1) + \i*(alpha1)$) {};
  \node[circle, fill=black, scale=0.3] at ($3*(varpi1) - 3*(varpi2) + \i*(alpha2)$) {};
}

\draw[-] ($3*(varpi2)$) -- ($-3*(varpi1)$)
  ($-3*(varpi1)$) -- ($3*(varpi1) - 3*(varpi2)$)
  ($3*(varpi1) - 3*(varpi2)$) -- ($3*(varpi2)$);

\foreach \i in {0,...,4} {
  \node[circle, fill=black, scale=0.3] at ($-1*(varpi1) + 5*(varpi2) - \i*(sph)$) {};
  \node[circle, fill=black, scale=0.3] at ($-4*(varpi1) - (varpi2) + \i*(alpha1)$) {};
  \node[circle, fill=black, scale=0.3] at ($5*(varpi1) - 4*(varpi2) + \i*(alpha2)$) {};
}

\draw[-] ($-1*(varpi1) + 5*(varpi2)$) -- ($-5*(varpi1) + (varpi2)$)
  ($-5*(varpi1) + (varpi2)$) -- ($-4*(varpi1) - (varpi2)$)
  ($-4*(varpi1) - (varpi2)$) -- ($4*(varpi1) - 5*(varpi2)$)
  ($4*(varpi1) - 5*(varpi2)$) -- ($5*(varpi1) - 4*(varpi2)$)
  ($5*(varpi1) - 4*(varpi2)$) -- ($(varpi1) + 4*(varpi2)$)
  ($(varpi1) + 4*(varpi2)$) -- ($-1*(varpi1) + 5*(varpi2)$);

\foreach \i in {0,...,5} {
  \node[circle, fill=black, scale=0.3] at ($-2*(varpi1) + 7*(varpi2) - \i*(sph)$) {};
  \node[circle, fill=black, scale=0.3] at ($-5*(varpi1) - 2*(varpi2) + \i*(alpha1)$) {};
  \node[circle, fill=black, scale=0.3] at ($7*(varpi1) - 5*(varpi2) + \i*(alpha2)$) {};
}

\node[circle, fill=black, scale=0.3] at ($6*(varpi2)$) {};
\node[circle, fill=black, scale=0.3] at ($6*(varpi1) - 6*(varpi2)$) {};
\node[circle, fill=black, scale=0.3] at ($-6*(varpi1)$) {};

\draw[-] ($-2*(varpi1) + 7*(varpi2)$) -- ($-7*(varpi1) + 2*(varpi2)$)
  ($-7*(varpi1) + 2*(varpi2)$) -- ($-5*(varpi1) - 2*(varpi2)$)
  ($-5*(varpi1) - 2*(varpi2)$) -- ($5*(varpi1) - 7*(varpi2)$)
  ($5*(varpi1) - 7*(varpi2)$) -- ($7*(varpi1) - 5*(varpi2)$)
  ($7*(varpi1) - 5*(varpi2)$) -- ($2*(varpi1) + 5*(varpi2)$)
  ($2*(varpi1) + 5*(varpi2)$) -- ($-2*(varpi1) + 7*(varpi2)$);

\node[right] at ($2*(varpi1) + 5*(varpi2)$) {$\lambda = \lambda_1 \varpi_1 + \lambda_2 \varpi_2$};

\draw[thick, ->, shorten <= 0.2cm, shorten >= 0.2cm]
  ($2*(alpha2)$) -- ($2*(alpha2)-(alpha1)$);
\draw[thick, ->, shorten <= 0.2cm, shorten >= 0.2cm]
  ($2*(alpha2)$) -- ($3*(alpha2)$);
\node at ($2*(alpha2)+ 0.2*(alpha2)-0.2*(alpha1)$) {$B_1$};

\draw[thick, ->, shorten <= 0.2cm, shorten >= 0.2cm]
  ($-5*(varpi1) + 4*(varpi2)$) -- ($-3*(varpi1) + 3*(varpi2)$);
\draw[thick, ->, shorten <= 0.2cm, shorten >= 0.2cm]
  ($-5*(varpi1) + 4*(varpi2)$) -- ($-4*(varpi1) + 2*(varpi2)$);
\node at ($-4*(varpi1) + 3*(varpi2)$) {$B_2$};

\fill[gray,rounded corners=2mm,opacity=0.45]   ($-6.4*(varpi1)+0.4*(varpi1)-0.4*(varpi2)$) --  ($-6.4*(varpi1)$)  -- ($6.4*(varpi2)$) -- ($6.4*(varpi2)+0.4*(varpi1)-0.4*(varpi2)$) -- cycle;

\fill[gray,rounded corners=2mm,opacity=0.45]   ($-6.4*(varpi1)+(alpha2)+0.4*(varpi1)-0.4*(varpi2)$) --  ($-6.4*(varpi1)+(alpha2)$)  -- ($6.4*(varpi2)-(alpha1)$) -- ($6.4*(varpi2)-(alpha1)+0.4*(varpi1)-0.4*(varpi2)$) -- cycle;

\fill[gray,rounded corners=2mm,opacity=0.45]   ($-6.4*(varpi1)-(alpha2)+0.4*(varpi1)-0.4*(varpi2)$) --  ($-6.4*(varpi1)-(alpha2)$)  -- ($6.4*(varpi2)+(alpha1)$) -- ($6.4*(varpi2)+(alpha1)+0.4*(varpi1)-0.4*(varpi2)$) -- cycle;

\node[left] at ($-6.2*(varpi1)+(alpha2)$) {$U_2$};

\node[left] at ($-6.2*(varpi1)$) {$U_1$};

\node[left] at ($-6.2*(varpi1)-(alpha2)$) {$U_0$};

\end{tikzpicture}}
\end{center}
\caption{Weight diagram for the weight $\lambda = 2\varpi_1 + 5\varpi_2$. The subspaces $U_i$ defined in \eqref{eq:definition_U_i} are spanned by the basis vectors indicated in gray.}
\label{uqsl3_fig_weightdiagram}
\end{figure}
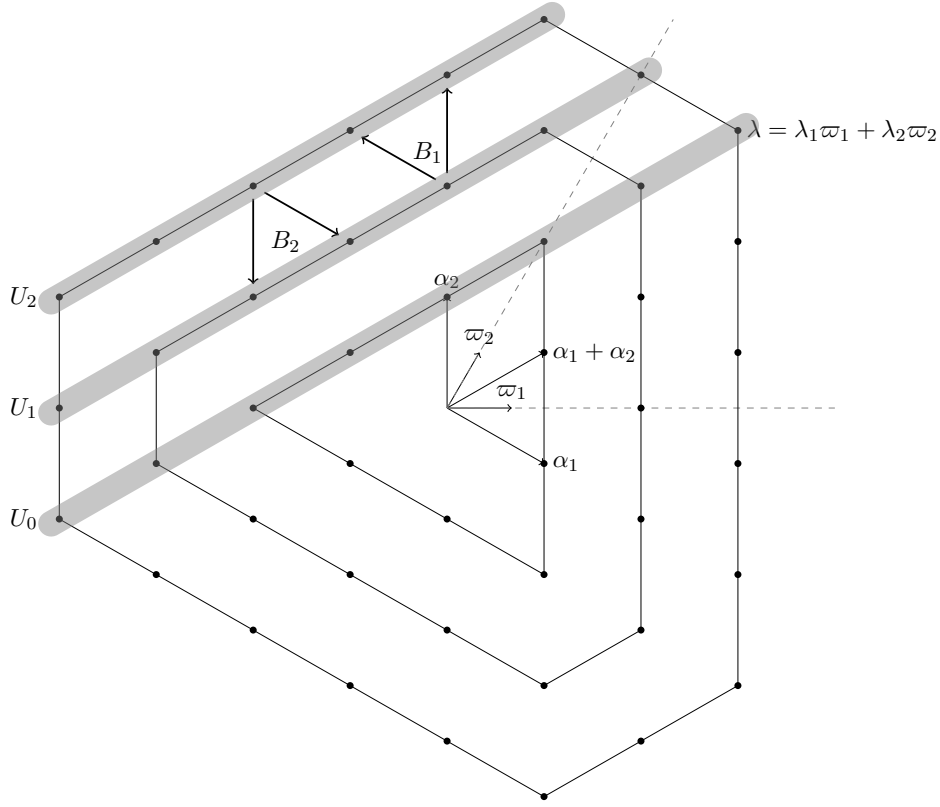
The goal of this subsection is to describe the structure of the kernel of $\pi_\lambda(B_1)$ by introducing a particular basis. For each $i=0,\ldots,\lambda_1$, we introduce the following subspaces of $V_\lambda$:
\begin{equation}
\label{eq:definition_U_i}
U_i=\langle \mathscr{B}_i \rangle, \qquad \mathscr{B}_i= \{ F_2^k\hat F_3^l F_1^{i+k} \, v_\lambda : \, 0\leq l \leq \lambda_2,\,\, 0\leq k \leq \lambda_1-i \}.
\end{equation}
It follows from weight space considerations, that $F_1,E_2:U_i\to U_{i+1}$ and $F_2,E_1:U_{i+1}\to U_{i}$ so that $B_1:U_{i}\to U_{i+1}$ and $B_2:U_{i+1}\to U_i$. This is shown in Figure \ref{uqsl3_fig_weightdiagram} for the highest weight $\lambda = 2\varpi_1 + 5\varpi_2$.

\begin{rmk}
\label{rmk:layer_structure}
For each $i=0,\ldots,\lambda_1$, the basis $\mathscr{B}_i$ consists on $\lambda_1-i+1$ layers of $\lambda_2+1$ vectors. More precisely, for $k=0,\ldots, \lambda_1-i$, the $k$-th layer is given by the vectors
$$F_2^k\hat F_3^l F_1^{k+i}\, v_\lambda,\qquad l=0,\ldots,\lambda_2.$$
This structure is indicated in the Figure \ref{fig:structure_basis_Ui} for the representation  $\lambda = 2\varpi_1 + 5\varpi_2$. The layers appear as circled numbers.
\end{rmk}

\begin{rmk}
The dimension of $U_i$ is $(\lambda_2+1)(\lambda_1-i+1)$. Therefore, the dimension of $\ker (B_1){|_{U_i}}$ is, at least, $\lambda_2+1$. In particular, $U_{\lambda_1}\subset \ker(B_1)$.
\end{rmk}

\begin{prop}
\label{prop:kernel_B1}
The kernel of $\pi_\lambda(B_1){|_{U_i}}$ has dimension $\lambda_2+1$. Moreover, a basis of $\ker \pi_\lambda(B_1){|_{U_i}}$ is given by $(u^i_n)_{n=0}^{\lambda_2}$, where
$$u^i_{n}=\sum_{k=0}^{\lambda_1-i} \sum_{l=0}^{\lambda_2} \gamma^n_{k,l} \, F_2^k\hat F_3^l F_1^{k+i}\, v_\lambda,$$
and the coefficients $\gamma^n_{k,l}$ are given by the recurrence relation
\begin{multline*}
a_k(l,k+i) \, \gamma^n_{k,l}  + b_{k+1}(l-1,k+i+1)  \, \gamma^n_{k+1,l-1}  \\
- c_1 \, q^{l+2i+k+1-\lambda_1} \, \eta_{k+1}(l,k+i+1) \, \gamma^n_{k+1,l} = 0,
\end{multline*}
for $k=1,\ldots,\lambda_1-i-1$, $l=0,\ldots,\lambda_2$, with initial values $\gamma^n_{\lambda_1-i,l}=\delta_{n,l}$.
\end{prop}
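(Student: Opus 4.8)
The plan is to convert the kernel equation $\pi_\lambda(B_1)\,u = 0$ into the stated linear recurrence by computing the action of $B_1 = F_1 - c_1 E_2 K_1^{-1}$ on the basis $\mathscr{B}_i$ of $U_i$ and reading off coordinates in $U_{i+1}$. Since $B_1\colon U_i\to U_{i+1}$, it suffices to expand $B_1$ on a single basis vector $F_2^k\hat F_3^l F_1^{k+i}\,v_\la$ using Proposition~\ref{prop:actiongeneratorsinbasis}. The $F_1$-part produces, via \eqref{prop:gen_3}, the two vectors $a_k(l,k+i)\,F_2^k\hat F_3^l F_1^{k+(i+1)}\,v_\la$ and $b_k(l,k+i)\,F_2^{k-1}\hat F_3^{l+1}F_1^{(k-1)+(i+1)}\,v_\la$, both of which lie in $\mathscr{B}_{i+1}$; while $-c_1E_2K_1^{-1}$ produces, by combining \eqref{prop:gen_1} and \eqref{prop:gen_6}, the single multiple $-c_1\,q^{-\la_1+k+l+2i}\,\eta_k(l,k+i)\,F_2^{k-1}\hat F_3^l F_1^{(k-1)+(i+1)}\,v_\la$, the power of $q$ being the $K_1^{-1}$-eigenvalue of $F_2^k\hat F_3^l F_1^{k+i}\,v_\la$ read off from \eqref{prop:gen_1}. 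Thus $B_1$ sends layer $k$ of $U_i$ into layers $k$ and $k-1$ of $U_{i+1}$.

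Next I would apply $B_1$ to $u^i_n = \sum_{k,l}\gamma^n_{k,l}\,F_2^k\hat F_3^l F_1^{k+i}\,v_\la$ and collect, for each basis vector $F_2^k\hat F_3^l F_1^{k+(i+1)}\,v_\la$ of $U_{i+1}$, the three contributions: the first term gives $a_k(l,k+i)\gamma^n_{k,l}$; the $b$-term, shifted by $(k,l)\mapsto(k-1,l+1)$, gives $b_{k+1}(l-1,k+i+1)\gamma^n_{k+1,l-1}$; and the $E_2K_1^{-1}$-term, shifted by $(k,l)\mapsto(k-1,l)$, gives $-c_1\,q^{l+2i+k+1-\la_1}\,\eta_{k+1}(l,k+i+1)\gamma^n_{k+1,l}$. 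Demanding that every such coefficient vanish is precisely the stated recurrence, where the indices range over the basis of $U_{i+1}$, i.e.\ $0\le k\le\la_1-i-1$ and $0\le l\le\la_2$, with the convention $\gamma^n_{\,\cdot,-1}=0$.

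It then remains to solve the recurrence and count solutions. Since the recurrence expresses $\gamma^n_{k,l}$ through the data of layer $k+1$, I would run it downward from the top layer $k=\la_1-i$, where I impose $\gamma^n_{\la_1-i,l}=\delta_{n,l}$. Solving for $\gamma^n_{k,l}$ requires dividing by $a_k(l,k+i)$, so the key point is that this coefficient is nonzero throughout the range; by the explicit formula in Proposition~\ref{prop:actiongeneratorsinbasis}, its numerator is $q^{\la_2+i+1-l}-q^{-(\la_2+i+1-l)}$, which is nonzero because $\la_2+i+1-l\ge i+1\ge1$. Hence the entire solution is uniquely determined by its top layer, which itself is unconstrained (the equations only reach layer $\la_1-i-1$). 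The $\la_2+1$ choices $\delta_{n,l}$ therefore yield $\la_2+1$ linearly independent vectors $u^i_n$ spanning $\ker\pi_\lambda(B_1){|_{U_i}}$, and counting free parameters gives $\dim\ker\pi_\lambda(B_1){|_{U_i}}=\la_2+1$, equivalently that $B_1|_{U_i}$ maps onto $U_{i+1}$.

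The main obstacle I anticipate is the combinatorial bookkeeping: correctly re-expressing the three summands of $B_1\,F_2^k\hat F_3^l F_1^{k+i}\,v_\la$ as genuine basis elements of $\mathscr{B}_{i+1}$ (the layer index drops by one in two of the three terms, and the height index rises by one in the $b$-term), and carrying the $K_1^{-1}$-eigenvalue exponent $-\la_1+k+l+2i$ through the reindexing $k\mapsto k+1$ so that the power of $q$ in the recurrence comes out as $q^{l+2i+k+1-\la_1}$. Verifying the nonvanishing of the leading coefficient $a_k(l,k+i)$, which legitimises solving downward and fixes the dimension, is the only genuinely substantive point beyond this bookkeeping.
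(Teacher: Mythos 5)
Your proposal is correct and follows essentially the same route as the paper: expand $B_1 = F_1 - c_1E_2K_1^{-1}$ on the basis $\mathscr{B}_i$ via Proposition \ref{prop:actiongeneratorsinbasis}, collect the coefficient of each basis vector of $U_{i+1}$ to obtain the recurrence, and solve downward from the unconstrained top layer. Your explicit check that $a_k(l,k+i)\neq 0$ (numerator $q^{\lambda_2+i+1-l}-q^{-\lambda_2-i-1+l}$ with $\lambda_2+i+1-l\geq 1$) is a point the paper leaves implicit, and it is exactly what justifies the unique downward solvability and hence the dimension count.
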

\begin{proof}
Let $u=\sum_{k=0}^{\lambda_1-i} \sum_{l=0}^{\lambda_2} \gamma_{k,l} \, F_2^k\hat F_3^l F_1^{k+i}\, v_\lambda$ be a vector in the kernel of $B_1$. Then
\begin{align*}
B_1  u &= \sum_{k=0}^{\lambda_1-i} \sum_{l=0}^{\lambda_2} \gamma_{k,l} \,  (F_1  F_2^k\hat F_3^l F_1^{k+i}\, v_\lambda - c_1E_2K_1^{-1}  F_2^k\hat F_3^l F_1^{k+i}\, v_\lambda) \displaybreak[0] \\
& = \sum_{k=0}^{\lambda_1-i} \sum_{l=0}^{\lambda_2} \gamma_{k,l} \, ( a_k(l,k+i) \, F_2^k\hat F_3^l F_1^{k+i+1} \, v_\lambda + b_k(l,k+i) \, F_2^{k-1}\hat F_3^{l+1} F_1^{k+i} \, v_\lambda \\
& \hspace{6.5cm}- c_1 q^{l+2i+k-\lambda_1} \eta_k(l,k+i) \, F_2^{k-1}\hat F_3^l F_1^{k+i}\, v_\lambda) \displaybreak[0] \\
& = \sum_{k=0}^{\lambda_1-i} \sum_{l=0}^{\lambda_2} ( a_k(l,k+i) \,\gamma_{k,l} +  b_{k+1}(l-1,k+i+1) \, \gamma_{k+1,l-1}  \\
& \hspace{4.5cm} -c_1\, q^{l+2i+k+1-\lambda_1} \, \eta_{k+1}(l,k+i+1)\gamma_{k+1,l} ) \, F_2^k\hat F_3^l F_1^{k+i+1}  \, v_\lambda.
\end{align*}
Since the elements $F_2^k\hat F_3^l F_1^{k+i+1}$, $0\leq k \leq \lambda_1-i$, $0\leq l \leq \lambda_2$, are linearly independent it follows that the coefficients $\gamma_{k,l}$ satisfy the following recurrence relation.
\begin{equation}
\label{eq:rec_relation_gammas}
a_k(l,k+i) \,\gamma_{k,l} +  b_{k+1}(l-1,k+i+1) \, \gamma_{k+1,l-1}
 -c_1\, q^{l+2i+k+1-\lambda_1} \, \eta_{k+1}(l,k+i+1) \, \gamma_{k+1,l}= 0.
 \end{equation}
For each $n=0,1,\ldots, \lambda_2$, if we set $\gamma^n_{\lambda_1-i,l}=\delta_{n,l}$, then \eqref{eq:rec_relation_gammas} determines uniquely a vector $u_n$ in the kernel of $B_1$. The vectors  $u_n$ are clearly linearly independent and span the kernel of $B_1$ restricted to $U_i$. This completes the proof of the proposition.
\end{proof}
\begin{figure}[t!]
\begin{center}
\scalebox{.95}{
\begin{tikzpicture}

\draw (0,0) circle (.1cm) node[label={[label distance=.1cm,scale=.7]90:$ v_\lambda$}] {};
\draw[shorten >= 5pt, shorten <= 5pt] (0,0) to (2,0);

\draw (2,0) circle (.1cm) node[label={[label distance=.1cm,scale=.7]90:$\hat F_3  v_\lambda$}] {};
\draw[shorten >= 5pt, shorten <= 5pt] (2,0) to (4,0);

\draw (4,0) circle (.1cm) node[label={[label distance=.1cm,scale=.7]90:$\hat F_3^2  v_\lambda$}] {};
\draw[shorten >= 5pt, shorten <= 5pt] (4,0) to (6,0);

\draw (6,0) circle (.1cm) node[label={[label distance=.1cm,scale=.7]90:$\hat F_3^3  v_\lambda$}] {};
\draw[shorten >= 5pt, shorten <= 5pt] (6,0) to (8,0);

\draw (8,0) circle (.1cm) node[label={[label distance=.1cm,scale=.7]90:$\hat F_3^4  v_\lambda$}] {};
\draw[shorten >= 5pt, shorten <= 5pt] (8,0) to (10,0);

\draw (10,0) circle (.1cm) node[label={[label distance=.1cm,scale=.7]90:$\hat F_3^5 v_\lambda$}] {};

\draw (2,1) circle (.1cm) node[label={[label distance=.1cm,scale=.7]90:$F_2F_1v_\lambda$}] {};
\draw[shorten >= 5pt, shorten <= 5pt] (2,1) to (4,1);

\draw (4,1) circle (.1cm) node[label={[label distance=.1cm,scale=.7]90:$F_2\hat F_3 F_1 v_\lambda$}] {};
\draw[shorten >= 5pt, shorten <= 5pt] (4,1) to (6,1);

\draw (6,1) circle (.1cm) node[label={[label distance=.1cm,scale=.7]90:$F_2\hat F_3^2 F_1 v_\lambda$}] {};
\draw[shorten >= 5pt, shorten <= 5pt] (6,1) to (8,1);

\draw (8,1) circle (.1cm) node[label={[label distance=.1cm,scale=.7]90:$F_2\hat F_3^3 F_1 v_\lambda$}] {};
\draw[shorten >= 5pt, shorten <= 5pt] (8,1) to (10,1);

\draw (10,1) circle (.1cm) node[label={[label distance=.1cm,scale=.7]90:$F_2\hat F_3^4 F_1 v_\lambda$}] {};
\draw[shorten >= 5pt, shorten <= 5pt ] (10,1) to (12,1);

\draw (12,1) circle (.1cm) node[label={[label distance=.1cm,scale=.7]90:$F_2\hat F_3^5 F_1 v_\lambda$}] {};

\draw (4,2) circle (.1cm) node[label={[label distance=.1cm,scale=.7]90:$F_2^2\hat F^2_1 v_\lambda$}] {};
\draw[shorten >= 5pt, shorten <= 5pt] (4,2) to (6,2);

\draw (6,2) circle (.1cm) node[label={[label distance=.1cm,scale=.7]90:$F_2^2\hat F_3 F_1^2 v_\lambda$}] {};
\draw[shorten >= 5pt, shorten <= 5pt] (6,2) to (8,2);

\draw (8,2) circle (.1cm) node[label={[label distance=.1cm,scale=.7]90:$F_2^2\hat F_3^2F_1^2 v_\lambda$}] {};
\draw[shorten >= 5pt, shorten <= 5pt] (8,2) to (10,2);

\draw (10,2) circle (.1cm) node[label={[label distance=.1cm,scale=.7]90:$F_2^2\hat F_3^3 F_1^2 v_\lambda$}] {};
\draw[shorten >= 5pt, shorten <= 5pt] (10,2) to (12,2);

\draw (12,2) circle (.1cm) node[label={[label distance=.1cm,scale=.7]90:$F_2^2\hat F_3^4 F_1^2 v_\lambda$}] {};;
\draw[shorten >= 5pt, shorten <= 5pt ] (12,2) to (14,2);

\draw (14,2) circle (.1cm) node[label={[label distance=.1cm,scale=.7]90:$F_2^2\hat F_3^5 F_1^2 v_\lambda$}] {};;

\node[left,scale=.7] at (-0.5,0) {\circled{0}};
\node[left,scale=.7] at (-0.5,1) {\circled{1}};
\node[left,scale=.7] at (-0.5,2) {\circled{2}};

\end{tikzpicture}}
\caption{Structure of the basis of $U_0$ for the representation $\pi_\lambda$ with $\lambda = 2\varpi_1 + 5\varpi_2$ as in Figure \ref{uqsl3_fig_weightdiagram}. The circled numbers indicate the layers of the basis.}
\label{fig:structure_basis_Ui}
\end{center}
\end{figure}
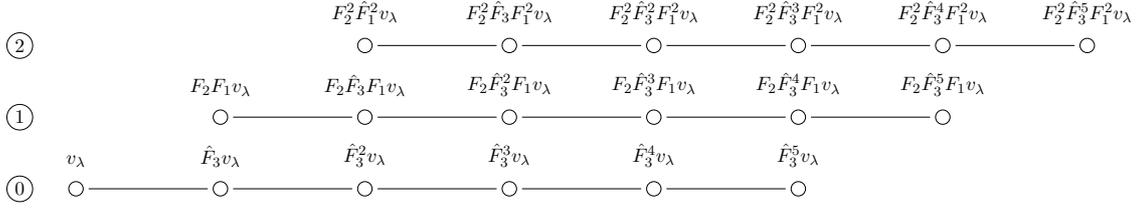

\begin{rmk}
\label{rmk:form_ui}
According to the layer structure of $\mathscr{B}_i$ described in Remark \ref{rmk:layer_structure}, the vector $u_n^i$ has a single non-zero contribution from the  vectors of the upper layer, namely from  $F_2^{\lambda_1-i} \hat F_3^n F_1^{\lambda_1}$,  and two contributions from the one but upper layer. Therefore, we have
\begin{multline}
u^i_n=F_2^{\lambda_1-i}\hat F_3^nF_1^{\lambda_1}\, v_\lambda + \gamma^n_{\lambda_1-i-1,n}   \,  F_2^{\lambda_1-i-1} \hat F_3^n F_1^{\lambda_1-1}\, v_\lambda+ \gamma^n_{\lambda_1-i-1,n+1}   \,  F_2^{\lambda_1-i-1} \hat F_3^{n-1} F_1^{\lambda_1-1} \, v_\lambda\\
+ \sum_{k=0}^{\lambda_1-i-2} \sum_{l=0}^{\lambda_2} \gamma^n_{k,l} \, F_2^k \hat F_3^l F_1^{i+k}\, v_\lambda.
\end{multline}
The coefficients $\gamma^n_{\lambda_1-i-1,n}$ and $\gamma^n_{\lambda_1-i-1,n+1}$ corresponding to the vectors of the one but last layer are given by
\begin{equation}
\label{eq:expression_gammas}
\begin{split}
\gamma^n_{\lambda_1-i-1,n} &= \frac{c_1\, q^{n+i} \, (q^{\lambda_1-i}-q^{-\lambda_1+i})(q^{\lambda_2+\lambda_1-n}-q^{-\lambda_2-\lambda_1+n})}{(q-q^{-1})^2},\\
\gamma^n_{\lambda_1-i-1,n+1} &=- \frac{(q^{\lambda_1-i}-q^{-\lambda_1+i})(q^{\lambda_2+\lambda_1-n-1}-q^{-\lambda_2-\lambda_1+n+1})}{(q^{\lambda_2+\lambda_1+1-n}-q^{-\lambda_2-\lambda_1-1+n})(q^{\lambda_2+i-n}-q^{-\lambda_2-i+n})}.
\end{split}
\end{equation}
The structure of the vectors $u_n^i$ for $U_{\lambda_1-2}$ is depicted in Figure \ref{fig:structure_ui}.
\end{rmk}
\begin{rmk}
\label{rmk:ui_not_orthogonal}
The basis $\{u^i_n\}^i_n$ of the kernel of $\pi_\lambda(B_1)$ is not an orthogonal basis. In fact, it follows from Remark \ref{rmk:form_ui} that
\begin{align*}
u_0^{\lambda_1-1} &= F_2^{\lambda_1-1}F_1^{\lambda_1}\, v_\lambda + \gamma^0_{\lambda_1-2,0}   \,  F_2^{\lambda_1-2} F_1^{\lambda_1-1}\, v_\lambda, \\
u_1^{\lambda_1-1} &= F_2^{\lambda_1-1}\hat F_3F_1^{\lambda_1}\, v_\lambda + \gamma^1_{\lambda_1-2,1}   \,  F_2^{\lambda_1-2} \hat F_3 F_1^{\lambda_1-1}\, v_\lambda+ \gamma^1_{\lambda_1-2,2}   \,  F_2^{\lambda_1-2} F_1^{\lambda_1-1}\, v_\lambda,
\end{align*}
and therefore
$$\langle u_0^{\lambda_1-1},u_1^{\lambda_1-1}\rangle=  \gamma^0_{\lambda_1-2,0}\gamma^1_{\lambda_1-2,2} H_{\lambda_1-2,0,\lambda_1-1}\neq0,$$
using the explicit expressions \eqref{eq:expression_gammas}.
\end{rmk}

\begin{center}
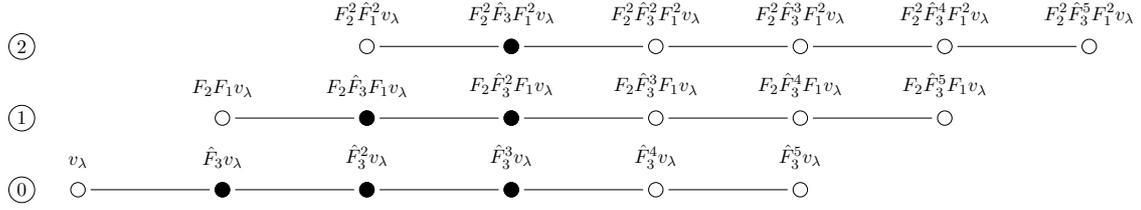
\begin{figure}[t]
\scalebox{.95}{
\begin{tikzpicture}

\draw (0,0) circle (.1cm) node[label={[label distance=.1cm,scale=.7]90:$ v_\lambda$}] {};
\draw[shorten >= 5pt, shorten <= 5pt] (0,0) to (2,0);

\draw (2,0) circle (.1cm) node[label={[label distance=.1cm,scale=.7]90:$\hat F_3  v_\lambda$}] {};
\draw[shorten >= 5pt, shorten <= 5pt] (2,0) to (4,0);

\draw (4,0) circle (.1cm) node[label={[label distance=.1cm,scale=.7]90:$\hat F_3^2  v_\lambda$}] {};
\draw[shorten >= 5pt, shorten <= 5pt] (4,0) to (6,0);

\draw (6,0) circle (.1cm) node[label={[label distance=.1cm,scale=.7]90:$\hat F_3^3  v_\lambda$}] {};
\draw[shorten >= 5pt, shorten <= 5pt] (6,0) to (8,0);

\draw (8,0) circle (.1cm) node[label={[label distance=.1cm,scale=.7]90:$\hat F_3^4  v_\lambda$}] {};
\draw[shorten >= 5pt, shorten <= 5pt] (8,0) to (10,0);

\draw (10,0) circle (.1cm) node[label={[label distance=.1cm,scale=.7]90:$\hat F_3^5 v_\lambda$}] {};

\draw (2,1) circle (.1cm) node[label={[label distance=.1cm,scale=.7]90:$F_2F_1v_\lambda$}] {};
\draw[shorten >= 5pt, shorten <= 5pt] (2,1) to (4,1);

\draw (4,1) circle (.1cm) node[label={[label distance=.1cm,scale=.7]90:$F_2\hat F_3 F_1 v_\lambda$}] {};
\draw[shorten >= 5pt, shorten <= 5pt] (4,1) to (6,1);

\draw (6,1) circle (.1cm) node[label={[label distance=.1cm,scale=.7]90:$F_2\hat F_3^2 F_1 v_\lambda$}] {};
\draw[shorten >= 5pt, shorten <= 5pt] (6,1) to (8,1);

\draw (8,1) circle (.1cm) node[label={[label distance=.1cm,scale=.7]90:$F_2\hat F_3^3 F_1 v_\lambda$}] {};
\draw[shorten >= 5pt, shorten <= 5pt] (8,1) to (10,1);

\draw (10,1) circle (.1cm) node[label={[label distance=.1cm,scale=.7]90:$F_2\hat F_3^4 F_1 v_\lambda$}] {};
\draw[shorten >= 5pt, shorten <= 5pt ] (10,1) to (12,1);

\draw (12,1) circle (.1cm) node[label={[label distance=.1cm,scale=.7]90:$F_2\hat F_3^5 F_1 v_\lambda$}] {};

\draw (4,2) circle (.1cm) node[label={[label distance=.1cm,scale=.7]90:$F_2^2\hat F^2_1 v_\lambda$}] {};
\draw[shorten >= 5pt, shorten <= 5pt] (4,2) to (6,2);

\draw (6,2) circle (.1cm) node[label={[label distance=.1cm,scale=.7]90:$F_2^2\hat F_3 F_1^2 v_\lambda$}] {};
\draw[shorten >= 5pt, shorten <= 5pt] (6,2) to (8,2);

\draw (8,2) circle (.1cm) node[label={[label distance=.1cm,scale=.7]90:$F_2^2\hat F_3^2F_1^2 v_\lambda$}] {};
\draw[shorten >= 5pt, shorten <= 5pt] (8,2) to (10,2);

\draw (10,2) circle (.1cm) node[label={[label distance=.1cm,scale=.7]90:$F_2^2\hat F_3^3 F_1^2 v_\lambda$}] {};
\draw[shorten >= 5pt, shorten <= 5pt] (10,2) to (12,2);

\draw (12,2) circle (.1cm) node[label={[label distance=.1cm,scale=.7]90:$F_2^2\hat F_3^4 F_1^2 v_\lambda$}] {};;
\draw[shorten >= 5pt, shorten <= 5pt ] (12,2) to (14,2);

\draw (14,2) circle (.1cm) node[label={[label distance=.1cm,scale=.7]90:$F_2^2\hat F_3^5 F_1^2 v_\lambda$}] {};;

\node[left,scale=.7] at (-0.5,0) {\circled{0}};
\node[left,scale=.7] at (-0.5,1) {\circled{1}};
\node[left,scale=.7] at (-0.5,2) {\circled{2}};
 \draw[fill=black] (6,2) circle (.1cm);
 \draw[fill=black] (6,1) circle (.1cm);
 \draw[fill=black] (6,0) circle (.1cm);
 \draw[fill=black] (4,1) circle (.1cm);
 \draw[fill=black] (4,0) circle (.1cm);
 \draw[fill=black] (2,0) circle (.1cm);
\end{tikzpicture}}
\caption{Structure of the basis $(u^{0}_n)_n$ of $\ker(B_1)|_{U_{0}}$ for the representation $\lambda = 2\varpi_1 + 5\varpi_2$ as in Figure \ref{uqsl3_fig_weightdiagram}. The black circles indicate the terms that contribute to the expression of the element $u^{0}_1=F_2^2\hat F_3F_1^{2} \, v_\lambda+\cdots$.}
\label{fig:structure_ui}
\end{figure}
\end{center}

\subsection{The action of $C_1$}
\label{subsec:diagC_1}
In Remark \ref{rmk_Serre_imply_ker} we observed that the kernel of $B_1$ is stable under the action of $C_1$. Furthermore for each $i=0,\ldots,\lambda_1$, $U_i$ is stable under $C_1$. The goal of this subsection is to compute the action of $C_1$ in the basis of $\ker \pi_\lambda(B_1)$ given in Proposition \ref{prop:kernel_B1}.

\begin{lem}\label{lem:action_elm_B1}
In the basis $\mathscr{B}$ of Theorem \ref{thm:basisVla} we have
\begin{align*}
F_1F_2 F_2^k\hat F_3^l F_1^{k+i}\, v_\lambda &= a_{k+1}(l,k+i) \,F_2^{k+1}\hat F_3^l F^{k+i+1} \, v_\lambda + b_{k+1}(l,k+i) \, F_2^{k}\hat F_3^{l+1} F_1^{k+i}\, v_\lambda,\\
E_2F_2  F_2^k\hat F_3^l F_1^{k+i}\, v_\lambda &= \eta_{k+1}(l,k+i) \, F_2^k\hat F_3^l\, F_1^{k+i} \, v_\lambda, \displaybreak[0]\\
F_1E_1 F_2^k\hat F_3^l F_1^{k+i}\, v_\lambda &= \alpha_k(l,k+i)\, a_k(l,k+i-1) \,  F_2^k\hat F_3^l F_1^{k+i} \, v_\lambda \\
&\qquad + \alpha_k(l,k+i)\, b_k(l,k+i-1) \, F_2^{k-1}\hat F_3^{l+1} F_1^{k+i-1} \, v_\lambda  \displaybreak[0]\\
& \qquad + \beta_k(l,k+i)\, a_{k+1}(l-1,k+i) F_2^{k+1}\hat F_3^{l-1} F_1^{k+i+1} \, v_\lambda \\
& \qquad + \beta_k(l,k+i)\, b_{k+1}(l-1,k+i) F_2^{k}\hat F_3^{l} F_1^{k+i} \, v_\lambda,\\
E_2E_1  F_2^k \hat F_3^{l}F_1^{k+i} &=\alpha_k(l,k+i)\, \eta_k(l,k+i-1) \,  F_2^{k-1}\hat F_3^l F_1^{k+i-1}\, v_\lambda \displaybreak[0] \\
& \qquad + \beta_k(l,k+i)\, \eta_{k+1}(l-1,k+i)\, F_2^k\hat F_3^{l-1} F_1^{k+i} \, v_\lambda, \\
K  F_2^{\lambda_1-i}\hat F_3^l F_1^{\lambda_1}\, v_\lambda &=     q^{\lambda_1-\lambda_2-3i} \, F_2^{\lambda_1-i}\hat F_3^l F_1^{\lambda_1}\, v_\lambda,\\
K^{-1}  F_2^{\lambda_1-i}\hat F_3^l F_1^{\lambda_1}\, v_\lambda &=     q^{\lambda_2-\lambda_1+3i} \, F_2^{\lambda_1-i}\hat F_3^l F_1^{\lambda_1}\, v_\lambda.
\end{align*}
\end{lem}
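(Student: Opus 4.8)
The plan is to obtain every identity by composing the single-generator actions recorded in Proposition \ref{prop:actiongeneratorsinbasis}, reading each product of operators from right to left and being careful with the index shifts the inner operator introduces before the outer one is applied. The two eigenvalue statements are immediate: since $K=K_1K_2^{-1}$, combining \eqref{prop:gen_1} and \eqref{prop:gen_2} of that proposition gives
$$K\,F_2^k\hat F_3^l F_1^m\,v_\la = q^{\la_1-\la_2+3k-3m}\,F_2^k\hat F_3^l F_1^m\,v_\la,$$
and specializing to $k=\la_1-i$, $m=\la_1$ collapses the exponent to $\la_1-\la_2-3i$; the statement for $K^{-1}$ is the reciprocal.

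For the identity involving $F_1F_2$ I would apply $F_2$ via \eqref{prop:gen_5}, turning the vector into $F_2^{k+1}\hat F_3^l F_1^{k+i}\,v_\la$, and then apply $F_1$ via \eqref{prop:gen_3} with the parameter triple $(k+1,l,k+i)$; this reproduces exactly the two terms with coefficients $a_{k+1}(l,k+i)$ and $b_{k+1}(l,k+i)$. The identity for $E_2F_2$ is handled identically: after $F_2$ I would apply $E_2$ via \eqref{prop:gen_6} with the same parameters $(k+1,l,k+i)$, which yields the single coefficient $\eta_{k+1}(l,k+i)$.

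The final two identities both begin by applying $E_1$ via \eqref{prop:gen_4}, which splits the vector into
$$\al_k(l,k+i)\,F_2^k\hat F_3^l F_1^{k+i-1}\,v_\la + \be_k(l,k+i)\,F_2^{k+1}\hat F_3^{l-1} F_1^{k+i}\,v_\la.$$
For $F_1E_1$ I would then apply $F_1$ to each summand via \eqref{prop:gen_3}, using the parameters $(k,l,k+i-1)$ on the first term and $(k+1,l-1,k+i)$ on the second; collecting the four resulting basis vectors gives the stated expression. For $E_2E_1$ I would instead apply $E_2$ to each summand via \eqref{prop:gen_6}, with the same two parameter triples, producing the two stated terms. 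The main bookkeeping obstacle---and essentially the only point requiring care---is that once the inner operator has shifted the indices, the coefficient functions $a_k,b_k,\al_k,\be_k,\eta_k$ must be evaluated at the shifted values; this is precisely why $b_{k+1}(l-1,k+i)$ rather than $b_k(l,k+i)$, and likewise $\eta_{k+1}(l-1,k+i)$, appear in the $F_1E_1$ and $E_2E_1$ identities.
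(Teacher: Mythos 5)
Your proposal is correct and follows exactly the route the paper takes: the paper's proof simply states that the lemma is a direct consequence of Proposition \ref{prop:actiongeneratorsinbasis}, and your composition of the single-generator actions (applying the inner operator first, then evaluating the coefficient functions at the shifted parameter triples) is precisely the computation that remark leaves implicit. All of your index bookkeeping checks out against the stated coefficients.
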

\begin{proof}
The lemma is a direct consequence of Proposition \ref{prop:actiongeneratorsinbasis}.
\end{proof}
Since $K$ acts as a multiple of the identity on each $U_i$, it suffices to determine the action of $B_1B_2$ on $U_i$.
\begin{lem}
\label{lem:action_B1B2_KerB1}
For $i\in {0,\ldots, \lambda_1}$, in  the basis $(u^i_n)_n$ of $\ker(B_1)$, we have
$$B_1B_2 \, u^i_n = A(n)u^i_{n+1}+B(n) u^i_{n} + C(n) u^i_{n-1},\qquad n=0,\ldots, \lambda_2,$$
where
\begin{align*}
A(n)&=\frac{q^{\lambda_2+i-n}(1-q^2)(1-q^{2\lambda_1+2\lambda_2-2n})}{(1-q^{2\lambda_2+2\lambda_1-2n+2})(1-q^{2\lambda_2+2i-2n})}, \\
B(n)&= -c_1 \frac{q^{2n+i-\lambda_1-\lambda_2} \, (1-q^{2\lambda_2-2n+2i})}{(1-q^2)} + \frac{c_2\, q^{\lambda_1-\lambda_2+2n-i+1}(1-q^{-2n-2i})}{(1-q^2)},\\
C(n)&= \frac{c_1c_2 \, q^{3n-3\lambda_2-i-2}(1-q^{2n})(1-q^{2\lambda_2-2n+2})(1-q^{2\lambda_1+2\lambda_2-2n+4})(1-q^{2\lambda_2+2i+2-2n})}{(1-q^2)^3(1-q^{2\lambda_2+2\lambda_1+2-2n})}.
\end{align*}
\end{lem}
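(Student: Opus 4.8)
The plan is to use the observation of Remark \ref{rmk_Serre_imply_ker} that $B_1B_2$ maps $\ker(\pi_\lambda(B_1))|_{U_i}$ into itself, so that $B_1B_2\,u^i_n$ is automatically a linear combination $\sum_m M_{m,n}\,u^i_m$ of the basis of Proposition \ref{prop:kernel_B1}. To extract the matrix $M$ I would project onto the top layer $k=\lambda_1-i$ of $U_i$: by Remark \ref{rmk:form_ui} the component of $u^i_m$ in this layer is exactly $F_2^{\lambda_1-i}\hat F_3^m F_1^{\lambda_1}\,v_\lambda$ with coefficient $1$ (the initial condition $\gamma^m_{\lambda_1-i,l}=\delta_{m,l}$). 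As these vectors are linearly independent for distinct $m$, the assignment sending a kernel vector to its layer-$(\lambda_1-i)$ component is injective, and $M_{m,n}$ equals the coefficient of $F_2^{\lambda_1-i}\hat F_3^m F_1^{\lambda_1}\,v_\lambda$ in the top-layer component of $B_1B_2\,u^i_n$. It therefore suffices to compute that component.

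First I would expand, using \eqref{uqsl3_eqn_B_generators},
\begin{equation*}
B_1B_2=F_1F_2-c_2\,F_1E_1K_2^{-1}-c_1\,E_2K_1^{-1}F_2+c_1c_2\,E_2K_1^{-1}E_1K_2^{-1},
\end{equation*}
and commute each $K_i^{\pm1}$ to the right by \eqref{eq:uqsl3-relations}, so that on the weight vector $F_2^k\hat F_3^l F_1^{k+i}\,v_\lambda$ it only contributes the scalar read off from Proposition \ref{prop:actiongeneratorsinbasis}\eqref{prop:gen_1} and \eqref{prop:gen_2}. The four remaining operators $F_1F_2$, $F_1E_1$, $E_2F_2$, $E_2E_1$ act as in Lemma \ref{lem:action_elm_B1}, and inspection of that lemma shows each changes the layer index $k$ by at most one, the only layer-raising parts being the $a_{k+1}$-term of $F_1F_2$ and the $\beta_k a_{k+1}$-term of $F_1E_1$. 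Since $u^i_n$ is supported in layers $0,\dots,\lambda_1-i$, the top layer of $B_1B_2\,u^i_n$ can receive contributions only from the layer-$(\lambda_1-i)$ part of $u^i_n$ (through layer-preserving terms) and from its layer-$(\lambda_1-i-1)$ part (through the two layer-raising terms); a raising term applied to the top layer produces $F_1^{\lambda_1+1}v_\lambda=0$. Both relevant parts are explicit: the top one is $F_2^{\lambda_1-i}\hat F_3^n F_1^{\lambda_1}\,v_\lambda$, and the one-but-top part consists of $\gamma^n_{\lambda_1-i-1,n}F_2^{\lambda_1-i-1}\hat F_3^n F_1^{\lambda_1-1}\,v_\lambda$ and $\gamma^n_{\lambda_1-i-1,n+1}F_2^{\lambda_1-i-1}\hat F_3^{n+1}F_1^{\lambda_1-1}\,v_\lambda$, with coefficients \eqref{eq:expression_gammas}.

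Next I would track the power of $\hat F_3$. The surviving pieces carry $\hat F_3^n$ (top and one-but-top) and $\hat F_3^{n+1}$ (one-but-top), while the operations of Lemma \ref{lem:action_elm_B1} shift this power by $+1$ (the $b_{k+1}$-term of $F_1F_2$), by $0$, or by $-1$ (the $\beta_k\eta_{k+1}$-term of $E_2E_1$ and the $\beta_k a_{k+1}$-term of $F_1E_1$). A short case check then shows only $\hat F_3^{n-1},\hat F_3^n,\hat F_3^{n+1}$ occur in the top layer; this already yields tridiagonality, $M_{m,n}=0$ for $m\notin\{n-1,n,n+1\}$, and identifies $A(n)=M_{n+1,n}$, $B(n)=M_{n,n}$, $C(n)=M_{n-1,n}$. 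Concretely, $A(n)=b_{\lambda_1-i+1}(n,\lambda_1)+\gamma^n_{\lambda_1-i-1,n+1}\,a_{\lambda_1-i}(n+1,\lambda_1-1)$; the value $C(n)$ combines the $c_1c_2$-weighted $\beta_k\eta_{k+1}$-term of $E_2E_1$ on the top layer with the $c_2$-weighted $\beta_k a_{k+1}$-term of $F_1E_1$ on the $\hat F_3^n$ part (whose $\gamma^n_{\lambda_1-i-1,n}$ itself carries a $c_1$); and $B(n)$ collects five terms, namely the $\alpha_k a_k$- and $\beta_k b_{k+1}$-terms of $F_1E_1$ and the $\eta_{k+1}$-term of $E_2F_2$ on the top layer, together with the $a_{k+1}$-term of $F_1F_2$ on the $\hat F_3^n$ part and the $\beta_k a_{k+1}$-term of $F_1E_1$ on the $\hat F_3^{n+1}$ part, which split into a $c_1$-group and a $c_2$-group.

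Finally I would substitute the closed forms of $a_k,b_k,\eta_k,\alpha_k,\beta_k$ from Proposition \ref{prop:actiongeneratorsinbasis} and of the $\gamma$'s from \eqref{eq:expression_gammas}, put each coefficient over a common denominator, and simplify the resulting Laurent polynomial in $q$. For $A(n)$ the factor $q^{\lambda_2+\lambda_1-n-1}-q^{-\lambda_2-\lambda_1+n+1}$ cancels and the numerator collapses to $(q-q^{-1})(q^{\lambda_1+\lambda_2-n}-q^{-\lambda_1-\lambda_2+n})$; rewriting each $q^x-q^{-x}$ as $-q^{-x}(1-q^{2x})$ then reproduces the stated $A(n)$, and $B(n)$, $C(n)$ follow in the same fashion. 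The main obstacle is precisely this last simplification. The conceptual crux is the bookkeeping of which elementary action of Lemma \ref{lem:action_elm_B1} lands in the top layer and at which power of $\hat F_3$; but once this is organised as above, the expressions for $A(n)$ and $C(n)$ are short, while $B(n)$ requires combining five terms mixing $c_1$ and $c_2$ with several $q$-shifted factors before they telescope into the asserted two-term form.
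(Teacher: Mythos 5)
Your proposal is correct and follows essentially the same route as the paper: both exploit that $B_1B_2$ shifts the layer index by at most one, so that only the top and one-but-top components of $u^i_n$ (with the explicit coefficients \eqref{eq:expression_gammas}) contribute to the top layer of $B_1B_2\,u^i_n$, whose coefficients determine the whole matrix because each $u^i_m$ has top-layer component exactly $F_2^{\lambda_1-i}\hat F_3^m F_1^{\lambda_1}v_\lambda$. The paper phrases the coefficient extraction as inner products $\langle B_1B_2\,u^i_n,\,F_2^{\lambda_1-i}\hat F_3^{s}F_1^{\lambda_1}v_\lambda\rangle$ against the orthogonal basis $\mathscr{B}$, which is equivalent to your direct projection onto the top layer, and your term-by-term bookkeeping for $A(n)$, $B(n)$, $C(n)$ matches the paper's displayed formulas.
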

\begin{proof}
Since $U_i$ is stable under $B_1B_2$ and $(u_n^i)_n$ is a basis of $U_i$, we have
$$B_1B_2 \, u_n^i = \sum_{j=0}^{\lambda_2} \nu_{j} \, u_{j}^i,$$
for certain coefficients $\nu_j$. Since $\mathscr{B}_i$ is an orthogonal basis and $u_n^i$ has a single contribution from the vectors in the upper layer of $\mathscr{B}_i$, see Remark \ref{rmk:form_ui}, we obtain that
\begin{align*}
\langle B_1B_2\, u_n^i , \,  F_2^{\lambda_1-i}\, \hat F_3^{s} \, F_1^{\lambda_1}\, v_\lambda \rangle &= \sum_{j=0}^{\lambda_2} \nu_j \langle u^i_j,F_2^{\lambda_1-i}\, \hat F_3^{s} \, F_1^{\lambda_1}\, v_\lambda  \rangle =\nu_s H_{\lambda_1-i,s,\lambda_1}^2.
\end{align*}
On the other hand, from \eqref{uqsl3_eqn_B_generators} we have
\begin{multline}
\label{eq:action_B1B2_generators}
B_1B_2  F_2^k\hat F_3^l F_1^{k+i} \, v_\lambda= F_1F_2  F_2^k\hat F_3^l F_1^{k+i} \, v_\lambda - c_1 q^{l+k+2i-1-\lambda_1} E_2F_2  F_2^k\hat F_3^l F_1^{k+i} \, v_\lambda \\
- c_2 q^{k+l-i-\lambda_2} F_1E_1  F_2^k\hat F_3^l F_1^{k+i} \, v_\lambda + c_1c_2\, q^{2l+2k+i-\lambda_1-\lambda_2-2}E_2E_1 F_2^k\hat F_3^l F_1^{k+i} \, v_\lambda.
\end{multline}
Applying Lemma \ref{lem:action_elm_B1} to \eqref{eq:action_B1B2_generators}, we verify that the action of $B_1B_2$ on the vector of the $k$-th layer $ F_2^k\hat F_3^l F_1^{k+i} \, v_\lambda$ has contributions from the $(k-1)$-th, $k$-th and $(k+1)$-th layer. Hence, Remark \ref{rmk:form_ui} implies
\begin{equation}
\label{eq:B1B2_innerp_F2F3F1}
\begin{split}
\langle B_1B_2\, u_n^i , \,  F_2^{\lambda_1-i}\, \hat F_3^{s} \, F_1^{\lambda_1}\, v_\lambda \rangle
&= \langle B_1B_2\, F_2^{\lambda_1-i}\hat F_3^nF_1^{\lambda_1}\, v_\lambda , \,  F_2^{\lambda_1-i}\, \hat F_3^{s} \, F_1^{\lambda_1}\, v_\lambda \rangle \\
& \qquad +\gamma^n_{\lambda_1-i-1,n}   \,  \langle B_1B_2\, F_2^{\lambda_1-i-1} \hat F_3^n F_1^{\lambda_1-1}\, v_\lambda, \,  F_2^{\lambda_1-i}\, \hat F_3^{s} \, F_1^{\lambda_1}\, v_\lambda \rangle \\
&\qquad+\gamma^n_{\lambda_1-i-1,n+1}   \,  \langle B_1B_2\,  F_2^{\lambda_1-i-1} \hat F_3^{n-1} F_1^{\lambda_1-1}\, v_\lambda , \,  F_2^{\lambda_1-i}\, \hat F_3^{s} \, F_1^{\lambda_1}\, v_\lambda \rangle.
\end{split}
\end{equation}
From Lemma \ref{lem:action_elm_B1} we obtain that \eqref{eq:B1B2_innerp_F2F3F1} is zero unless $s=n-1, n, n+1$. Moreover, we have
\begin{align*}
& \langle B_1B_2\, u_n^i , \,  F_2^{\lambda_1-i}\, \hat F_3^{n+1} \, F_1^{\lambda_1}\, v_\lambda \rangle   = [b_{\lambda_1-i+1}(n,\lambda_1)+\gamma^n_{\lambda_1-i-1,n+1} \, a_{\lambda_1-i}(n+1,\lambda_1-1)] H_{\lambda_1-i,n+1,\lambda_1}^2,  \displaybreak[0] \\
& \langle B_1B_2\, u_n^i , \,  F_2^{\lambda_1-i}\, \hat F_3^{n} \, F_1^{\lambda_1}\, v_\lambda \rangle  = [-c_1\, q^{n+i-1} \, \eta_{\lambda_1-i+1}(l,\lambda_1) \\
& \qquad - c_2\, q^{\lambda_1+n-2i-\lambda_2} \, \alpha_{\lambda_1-i}(n,\lambda_1) \, a_{\lambda_1-i}(n,\lambda_1-1) \\
& \qquad \qquad -c_2 \, q^{\lambda_1-2i+n-\lambda_2} \, \beta_{\lambda_1-i}(n,\lambda_1) \, b_{\lambda_1-i+1}(n-1,\lambda_1) + \gamma^n_{\lambda_1-i-1,n} \, a_{\lambda_1-i}(n,\lambda_1-1)\\
& \qquad \qquad \qquad - c_2\, q^{\lambda_1+n-2i-\lambda_2} \gamma^n_{\lambda_1-i-1,n+1} \, \beta_{\lambda_1-i-1}(n+1,\lambda_1-1)\, a_{\lambda_1-i}(n,\lambda_1-1)]H_{\lambda_1-i,n,\lambda_1}^2, \\
&\langle B_1B_2\, u_n^i , \,  F_2^{\lambda_1-i}\, \hat F_3^{n-1} \, F_1^{\lambda_1}\, v_\lambda \rangle = [c_1c_2 \, q^{\lambda_1-\lambda_2+2n-i-2} \, \beta_{\lambda_1-i}(n,\lambda_1) \, \eta_{\lambda_1-i+1}(n-1,\lambda_1) \\
&\qquad - c_2 q^{\lambda_1-\lambda_2 + n -2i-1} \, \gamma^n_{\lambda_1-i-1,n} \, \beta_{\lambda_1-i-1}(n,\lambda_1-1) \, a_{\lambda_1-i}(n-1,\lambda_1-1) ] H_{\lambda_1-i,n-1,\lambda_1}^2.
\end{align*}
Now the lemma follows from Proposition \ref{prop:actiongeneratorsinbasis} and \eqref{eq:expression_gammas}.
\end{proof}

\begin{lem}
\label{lem:action_C1_on_uin}
For $i\in {0,\ldots, \lambda_1}$, in  the basis $(u^i_n)_n$ of $\ker(B_1)$, we have
$$C_1 \, u_n^i = A(n)u^i_{n+1}+\left(B(n)+D \right) u^i_{n} + C(n) u^i_{n-1},\quad D=- c_2\frac{q^{\lambda_1-\lambda_2-3i}}{q - q^{-1}} + c_1\frac{q^{\lambda_2-\lambda_1+3i}(q + q^{-1})}{q - q^{-1}}.$$
\end{lem}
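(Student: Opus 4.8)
The plan is to read off the action of $C_1$ directly from its definition~\eqref{uqsl3_eqn_C1C2} together with the action of $B_1 B_2$ already computed in Lemma~\ref{lem:action_B1B2_KerB1}. Recall that
\begin{equation*}
C_1 = B_1 B_2 - q B_2 B_1 - \frac{1}{q - q^{-1}} c_2 K + \frac{q + q^{-1}}{q - q^{-1}} c_1 K^{-1},
\end{equation*}
so the whole task reduces to evaluating each of the four summands on the basis vectors $u_n^i$.

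First I would use that $u_n^i \in \ker(B_1)$ by construction (Proposition~\ref{prop:kernel_B1}), so that $B_1 u_n^i = 0$ and hence $B_2 B_1 u_n^i = 0$. Thus the summand $-q B_2 B_1$ contributes nothing, and on $\ker(B_1)$ the operator $C_1$ acts simply as $B_1 B_2$ plus the two Cartan terms.

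Next I would substitute the tridiagonal action of $B_1 B_2$ from Lemma~\ref{lem:action_B1B2_KerB1}, which directly supplies the coefficients $A(n)$, $B(n)$ and $C(n)$ of $u_{n+1}^i$, $u_n^i$ and $u_{n-1}^i$. For the two remaining terms I would invoke the fact, recorded in Lemma~\ref{lem:action_elm_B1}, that on all of $U_i$ the generator $K$ acts as the scalar $q^{\lambda_1 - \lambda_2 - 3i}$ and $K^{-1}$ as the scalar $q^{\lambda_2 - \lambda_1 + 3i}$. Since $u_n^i \in U_i$, these terms act diagonally and contribute
\begin{equation*}
\left(- c_2 \frac{q^{\lambda_1 - \lambda_2 - 3i}}{q - q^{-1}} + c_1 \frac{q^{\lambda_2 - \lambda_1 + 3i}(q + q^{-1})}{q - q^{-1}}\right) u_n^i = D \, u_n^i.
\end{equation*}
Adding the four contributions gives the asserted three-term expression, with the diagonal coefficient shifted from $B(n)$ to $B(n) + D$.

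I expect no genuine obstacle here: the substantive computation lies entirely in Lemma~\ref{lem:action_B1B2_KerB1}, and the present statement is a bookkeeping corollary of it. The only points requiring care are the vanishing of the $-q B_2 B_1$ term on the kernel, which is precisely what turns $C_1$ into a mere diagonal shift of $B_1 B_2$ there, and the correct reading of the scalar $D$ from the eigenvalues of $K^{\pm 1}$ on $U_i$.
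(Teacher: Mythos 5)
Your proposal is correct and follows exactly the paper's own (very brief) argument: substitute the tridiagonal action of $B_1B_2$ from Lemma \ref{lem:action_B1B2_KerB1} into the definition \eqref{uqsl3_eqn_C1C2} of $C_1$, note that the $B_2B_1$ term kills vectors in $\ker(B_1)$, and use that $K^{\pm1}$ acts on $U_i$ by the scalar $q^{\pm(\lambda_1-\lambda_2-3i)}$ to produce the diagonal shift $D$.
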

\begin{proof}
Lemma \ref{lem:action_B1B2_KerB1}, \eqref{uqsl3_eqn_C1C2} and $K$ acting as a multiple of the identity give the result.
\end{proof}

We are now ready to  find the eigenvectors of $C_1$ restricted to $\ker(B_1)|_{U_i}$. We will describe these eigenvectors as a linear combination of the vectors $u_n^i$ with explicit coefficients given in terms of dual $q$-Krawtchouk polynomials. For $N\in \mathbb{N}$ and $n=0,1,\ldots,N$, the dual $q$-Krawtchouk polynomials are given explicitly by
$$K_n(\lambda(x);c,N|q)=\frac{(q^{x-N};q)_n}{(q^{-N};q)_nq^{nx}} \, \pfq{2}{1}{q^{-n},q^{-x}}{q^{N-x-n+1}}{q}{cq^{x+1}},$$
where $\lambda(x)=q^{-x}+cq^{x-N}$, see \cite[(3.17.1)]{KoekS}. We follow the standard notation of \cite{GR} for basic hypergeometric series. The polynomials
\begin{equation}
\label{eq:rl}
r_l(\lambda(x))=(q^{-2N};q)_l \, K_l(\lambda(x);c,N|q^2),
\end{equation}
satisfy the three term recurrence relation
\begin{equation}
\label{eq:three_term_kraw}
x \, r_l(x) = r_{l+1}(x) +(1+c)q^{2l-2N}\, r_{l}(x) + c\, q^{-2N}(1-q^{2l})(1-q^{2l-2N-2}) \, r_{l-1}(x).
\end{equation}

\begin{prop}
\label{prop:basis_psi}
For $i=0,\ldots,\lambda_1$, the set $\{\psi^i_x\}_{x=0}^{\lambda_2}$ where
$$\psi^i_x=\sum_{l=0}^{\lambda_2}
\frac{c_1^l \, q^{-l(\lambda_1+2)+l(l-1)/2} \, (q^{-2\lambda_2},q^{-2\lambda_2-2\lambda_1};q^2)_l}{(q^{-2\lambda_2-2\lambda_1-2},q^{-2\lambda_2-2i};q^2)_l} \, K_{l}(\lambda(x),-c_1^{-1}c_2q^{2\lambda_1-2i+1},\lambda_2,q^2)\, u^i_l,$$
is a basis of eigenvectors of $C_1$ restricted to $\ker(B_1)|_{U_i}$. The eigenvalue of $\psi^i_x$ is
$$\eta_1 = \frac{c_1 \, \kappa^{-1} \, q (1 + q^{-2n-2}) - c_2 \, \kappa \, q^{2n}}{q - q^{-1}},$$
for $\kappa=q^{\lambda_1-3i-\lambda_2}$ and $n=x+i$.
\end{prop}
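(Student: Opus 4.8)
The plan is to diagonalise the tridiagonal operator $C_1|_{\ker(B_1)|_{U_i}}$, whose matrix in the basis $(u^i_n)_n$ is given explicitly by Lemma \ref{lem:action_C1_on_uin}. A three-term recurrence operator is diagonalised by the associated orthogonal polynomials, so I would look for eigenvectors of the form $\psi^i_x=\sum_{l=0}^{\lambda_2} g_l\,r_l(\lambda(x))\,u^i_l$, where $r_l$ are the dual $q$-Krawtchouk polynomials of \eqref{eq:rl} with $N=\lambda_2$ and $c=-c_1^{-1}c_2q^{2\lambda_1-2i+1}$, and $g_l$ is the explicit scalar gauge factor appearing in the statement (so that $g_l r_l$ is precisely the coefficient of $u^i_l$ in $\psi^i_x$). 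Inserting this ansatz into $C_1\psi^i_x=\eta_1\psi^i_x$ and using the tridiagonal action of Lemma \ref{lem:action_C1_on_uin} turns the eigenvalue problem into the scalar recurrence
\begin{equation*}
A(n-1)\,\phi_{n-1}+\bigl(B(n)+D\bigr)\phi_n+C(n+1)\,\phi_{n+1}=\eta_1\,\phi_n,\qquad \phi_n=g_n r_n(\lambda(x)),
\end{equation*}
for the coefficients, with $A,B,C,D$ as in Lemmas \ref{lem:action_B1B2_KerB1} and \ref{lem:action_C1_on_uin}.

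Next I would perform the gauge transformation: dividing the recurrence by $g_n$ replaces the super- and sub-diagonal coefficients by $C(n+1)g_{n+1}/g_n$ and $A(n-1)g_{n-1}/g_n$. The whole point of the chosen $g_l$ is that these reduce to the coefficients of the dual $q$-Krawtchouk recurrence \eqref{eq:three_term_kraw} after the affine substitution $\eta_1=a\lambda(x)+b$. Concretely, I would verify the three identities
\begin{align*}
C(n+1)\,\frac{g_{n+1}}{g_n}&=a,\\
A(n-1)\,\frac{g_{n-1}}{g_n}&=a\,c\,q^{-2\lambda_2}(1-q^{2n})(1-q^{2n-2\lambda_2-2}),\\
B(n)+D&=a(1+c)q^{2n-2\lambda_2}+b,
\end{align*}
with $a=\dfrac{c_1\,q^{\lambda_2-\lambda_1+i-1}}{q-q^{-1}}$ and $b=\dfrac{c_1\,q^{\lambda_2-\lambda_1+3i+1}}{q-q^{-1}}$ both independent of $n$. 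The first two are direct manipulations of $q$-shifted factorials using the explicit $A,C$ from Lemma \ref{lem:action_B1B2_KerB1} together with the explicit ratio $g_{n+1}/g_n$. The third is the one genuine identity: expanding $B(n)$ shows its $n$-dependent part is a single multiple of $q^{2n}$ matching $a(1+c)q^{2n-2\lambda_2}$, while its $n$-independent part combined with $D$ collapses to the constant $b$. Once these hold, the recurrence for $\phi_n$ coincides with \eqref{eq:three_term_kraw}, so each $\psi^i_x$ is an eigenvector of $C_1$ with eigenvalue $a\lambda(x)+b$.

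It then remains to identify the eigenvalue and to conclude the basis property. Substituting $\lambda(x)=q^{-2x}+c\,q^{2x-2\lambda_2}$ and $\kappa=q^{\lambda_1-3i-\lambda_2}$, $n=x+i$ into $a\lambda(x)+b$ and separating the $q^{2x}$, $q^{-2x}$ and constant terms shows that it equals the expression for $\eta_1$ in Corollary \ref{cor:rep_B_basisw}; this matching is in fact what fixes the values of $a$ and $b$ above. Finally, for $x=0,\ldots,\lambda_2$ the points $\lambda(x)$ are pairwise distinct under the genericity hypothesis on $c_1,c_2$, so the $\psi^i_x$ are eigenvectors of $C_1|_{\ker(B_1)|_{U_i}}$ for distinct eigenvalues; since there are $\lambda_2+1=\dim\ker(B_1)|_{U_i}$ of them by Proposition \ref{prop:kernel_B1}, they form a basis. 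The main obstacle is the third identity above together with the bookkeeping of $q$-powers in $g_{n\pm 1}/g_n$, that is, showing that the explicit gauge factor $g_l$ normalises the tridiagonal coefficients of Lemma \ref{lem:action_C1_on_uin} into exactly the dual $q$-Krawtchouk recurrence \eqref{eq:three_term_kraw}.
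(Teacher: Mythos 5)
Your proposal is correct and follows essentially the same route as the paper: both reduce the eigenvalue problem to the three-term recurrence coming from the tridiagonal action in Lemma \ref{lem:action_C1_on_uin} and identify it, after a diagonal rescaling of the coefficients and an affine change of spectral variable, with the dual $q$-Krawtchouk recurrence \eqref{eq:three_term_kraw}; your constants $a$ and $b$ agree with the paper's computation (the paper normalises via monic polynomials and recovers the gauge factor $g_l$ at the end, rather than verifying the three gauge identities directly). Your closing linear-independence argument via distinct eigenvalues and the dimension count from Proposition \ref{prop:kernel_B1} is a small addition that the paper leaves implicit.
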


\begin{rmk}
As we pointed out in Remark \ref{rmk:ui_not_orthogonal}, the basis $(u^i_n)_n$ is not orthogonal. Still the operator $C_1$ acts tridiagonally. Moreover, if $\mathcal{B}$ is $\ast$-invariant then the basis $\{\psi^i_x\}_{x=0}^{\lambda_2}$ in Proposition \ref{prop:basis_psi} is orthogonal although, because of the non-orthogonality of $(u^i_n)_n$, this does not follow directly from the orthogonality of the dual $q$-Krawtchouk polynomials.
\end{rmk}

\begin{proof}
Assume there exist polynomials $p_n(x)$ such that $v=\sum_{l=0}^{\lambda_2} p_l(x) \, u^i_l$ is an eigenvector of $C_1$ with eigenvalue $\eta_1$, i.e. $C_1\, v = \eta_1 \, v$. From Lemma \ref{lem:action_C1_on_uin} we have
$$C_1\, v = \sum_{l=0}^{\lambda_2} \, p_l(x) ( A(l)u^i_{l+1}+\left(B(l)+D \right) u^i_{l} + C(l) u^i_{l-1}) =\sum_{l=0}^{\lambda_2} \, \eta_1 \, p_l(x)\, u^i_l.$$
Since $(u^i_l)_l$ is a basis of $\ker(B_1)|_{U_i}$ the vectors $u_l^i$ are linearly independent and hence the polynomials $p_l$ satisfy the following three term recurrence relation
$$\eta_1\, p_l(x)=C(l+1)p_{l+1}(x)+ \left(B(l)+D \right) p_l(x) + A(l-1) p_{l-1}(x).$$
If $k_l$ is the leading coefficient of $p_l$, then $P_l=k_l^{-1}p_l$ is a sequence of monic polynomials satisfying the recurrence relation
\begin{equation}
\label{eq:recurrence_P_l}
\eta_1 \, P_l(x)=P_{l+1}(x)+ \left(B(l)+D \right) P_l(x) + C(l)A(l-1)P_{l-1}(x),
\end{equation}
where
\begin{align*}
B(l)+D &= -\frac{c_1 \, q^{2l+i-\lambda_1-\lambda_2}(1-c_1^{-1}c_2\, q^{2\lambda_1-2i+1})}{(1-q^2)} - \frac{c_1\, q^{3i-\lambda_1+\lambda_2+2}}{(1-q^2)},\\
C(l)A(l-1)&=-\frac{c_1c_2\,q\,(1-q^{2l})(1-q^{2l-2\lambda_2-2})}{(1-q^2)^2},
\end{align*}
using Lemma \ref{lem:action_B1B2_KerB1} and Lemma \ref{lem:action_C1_on_uin}. We will identify the polynomials $P_l$ with the dual $q$-Krawtchouk polynomials. If we let
$$c=-c_1^{-1}c_2\, q^{2\lambda_1-2i+1}, \quad N=\lambda_2,$$
 the recurrence relation \eqref{eq:three_term_kraw} is given by
\begin{multline*}
x\, r_l(x) = r_{l+1}(x) +(1+c_1^{-1}c_2q^{2\lambda_1-2i+1})q^{2l-2\lambda_2}\, r_{l}(x)  \\
+ c_1^{-1}c_2q^{2\lambda_1-2\lambda_2-2i+1}(1-q^{2l})(1-q^{2l-2\lambda_2-2}) \, r_{l-1}(x).
\end{multline*}
If we let $\tilde r_l(x)=a^{-l} \, r_l(ax)$ with $a=-c^{-1}_1\, q^{\lambda_1-\lambda_2-i}(1-q^2)$, by a straightforward computation we obtain
\begin{multline}
\label{eq:rec_Q_a}
\left(x - \frac{c_1\, q^{3i-\lambda_1+\lambda_2+2}}{(1-q^2)}\right) \, \tilde r_l(x) = \tilde r_{l+1}(x) + (B(l)+D)\,  \tilde r_l(x) + C(l)A(l-1)\, \tilde r_{l-1}(x).
\end{multline}
If we evaluate \eqref{eq:rec_Q_a} in $\lambda(x)a^{-1}$, the eigenvalue is given by
\begin{align*}
\frac{\lambda(x)}{a}-\frac{c_1\, q^{3i-\lambda_1+\lambda_2+2}}{(1-q^2)} =\frac{c_1\, q^{3i-\lambda_1+\lambda_2+2}(1+q^{-2x-2i-2}) + c_2\, q^{\lambda_1-\lambda_2-i+2x}}{q-q^{-1}}.
\end{align*}
Therefore the polynomials $P_l(x)=\tilde{r}(\lambda(x)a^{-1})=a^{-l} \, r_l(\lambda(x))$ satisfy the recurrence \eqref{eq:recurrence_P_l} with
eigenvalue
\begin{align*}
\eta_1&=\frac{c_1 \, \kappa^{-1} \, q (1 + q^{-2n-2}) - c_2 \, \kappa \, q^{2n}}{q - q^{-1}},
\end{align*}
with  $\kappa=q^{\lambda_1-3i-\lambda_2}$ and $n=x+i$, for $x=0,\ldots,\lambda_2$. Finally, $p_l(x)=k_l\,a^{-1}r_l(\lambda(x))$. The explicit expression of $p_l$ follows from \eqref{eq:rl} and Lemma \ref{lem:action_B1B2_KerB1}.
\end{proof}

\begin{proof}[Proof of Theorem \ref{thm:branching}]
From Proposition \ref{prop:basis_psi} we obtain vectors $\psi_x^i$ for $i=0,\ldots, \lambda_1$, $x=0,\ldots,\lambda_2$ such that
$$\pi_\lambda(B_1)\, \psi_x^i = 0,\quad \text{and} \quad C_1\, \psi_x^i = \frac{c_1 \, \kappa^{-1} \, q (1 + q^{-2n-2}) - c_2 \, \kappa \, q^{2n}}{q - q^{-1}} \, \psi_x^i = \eta_1 \, \psi_x^i.$$
where $\kappa=q^{\lambda_1-3i-\lambda_2}$ and $n=x+i$, so that $\psi_x^i$ is a highest weight vector. It follows from Corollary \ref{cor:caracterizations_irreps_B} that the highest weight vector $\psi_x^i$ defines an irreducible representation of $\mathcal{B}$ of dimension $x+i+1$
$$W_{q^{\lambda_1-\lambda_2-3i},x+i}=\langle \{\, \psi_x^i, \, \pi_\lambda(B_2)\, \psi_x^i, \,  \pi_\lambda(B_2)^2\, \psi_x^i, \,\ldots \,,\,  \pi_\lambda(B_2)^{x+i}\, \psi_x^i \, \} \rangle.$$
Let $W=\oplus_{(\kappa,n)} W_{(\kappa,n)}$ where the sum is taken over $(\kappa,n)=(q^{\lambda_1-3i-\lambda_2},x+i)$ for $i=0,\ldots, \lambda_1$, $x=0,\ldots,\lambda_2$. We have that $W\subset V_\lambda$ and
\begin{equation*}
\dim W = \sum_{i,x} \dim W_{q^{\lambda_1-\lambda_2-3i},x+i}  = \frac12 (\la_1+1)(\la_2+1)(\la_1+\la_2+2) = \dim V_\lambda.
\end{equation*}
Therefore $W=V_\lambda$ and this completes the proof of the theorem.
\end{proof}

\subsection*{Acknowledgement.}
We thank Stefan Kolb for helpful discussions on this paper. Noud Aldenhoven also thanks him for his hospitality during his visit to Newcastle.

The research of  Noud Aldenhoven is supported by the Netherlands Organization
for Scientific Research (NWO) under project number 613.001.005 and by the Belgian Interuniversity
Attraction Pole Dygest P07/18.

The research of Pablo Rom\'an is supported by the Radboud Excellence Fellowship. P. Rom\'an was partially supported by CONICET grant PIP 112-200801-01533 and by SeCyT-UNC.

\begin{appendix}

\section{Proof of Theorem \ref{thm:basisVla} }
\label{apen:proof_basis}

\begin{lem}\label{lem:relinUqa}
The following relations hold in $\Uq$:
\begin{enumerate}[(i)]
\setlength\itemsep{0.3em}
\item\label{lem:relinUq-ii} $F_2\hat{F}_3[a] = \hat{F}_3[a+1]F_2,$
\item\label{lem:relinUq-iii} $E_1\hat{F}_3[a] = \hat{F}_3[a+1]E_1 + F_2\frac{(q^{a+1}K_1K_2-q^{-a-1}(K_1K_2)^{-1})}{(q-q^{-1})},$
\item\label{lem:relinUq-vi} $F_2F_3 = q F_3F_2,$
\item\label{lem:relinUq-vii} $\bigl( \hat{F}_3[a]\bigr)^\ast  = q \hat{E}_3[a] (K_1K_2)^{-1}$,
$F_3^\ast = qE_3 (K_1K_2)^{-1},$
\item\label{lem:relinUq-x} $\hat{F}_3 = F_3 \frac{qK_2-q^{-1}K_2^{-1}}{q-q^{-1}} + qF_2F_1K_2,$
\item\label{lem:relinUq-xi} $E_1{F}_3 = F_3E_1+F_2K_1,$
\end{enumerate}
\end{lem}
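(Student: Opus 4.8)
The plan is to verify all six identities directly from the defining relations \eqref{eq:uqsl3-relations}, the quantum Serre relations \eqref{eq:qSerre}, and the $\ast$-structure \eqref{eq:uqsl3-star}, exactly in the spirit of Lemma \ref{lem:relinUq}. It is convenient to abbreviate $h(b)=\frac{q^{b}K_2-q^{-b}K_2^{-1}}{q-q^{-1}}$, so that $\hat F_3[a]=F_1F_2\,h(a+1)-F_2F_1\,h(a)$ and, by \eqref{eq:uqsl3-relations}, $\hat E_3[a]=h(a+1)E_2E_1-h(a)E_1E_2$. The recurring bookkeeping tools are the intertwining rules $F_2\,h(b)=h(b+2)F_2$, $F_1\,h(b)=h(b-1)F_1$ and $E_1\,h(b)=h(b+1)E_1$, all immediate from $K_iF_j=q^{-(\alpha_i,\alpha_j)}F_jK_i$ and $K_iE_j=q^{(\alpha_i,\alpha_j)}E_jK_i$. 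I would first dispose of \eqref{lem:relinUq-x}: expanding $F_3=F_1F_2-qF_2F_1$ and collecting the Cartan coefficients gives $\hat F_3-F_3\,h(1)=F_2F_1\bigl(qh(1)-h(0)\bigr)=qF_2F_1K_2$, which is the claim (the same computation yields the general formula $\hat F_3[a]=F_3\,h(a+1)+q^{a+1}F_2F_1K_2$). Identity \eqref{lem:relinUq-vi} then follows from \eqref{eq:qSerre}, since $F_2F_3-qF_3F_2=(1+q^2)F_2F_1F_2-q\bigl(F_1F_2^2+F_2^2F_1\bigr)$ vanishes because $F_1F_2^2+F_2^2F_1=(q+q^{-1})F_2F_1F_2$.

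The only genuinely nontrivial step is \eqref{lem:relinUq-ii}, and I expect it to be the main obstacle. Using the intertwining rules one computes $F_2\hat F_3[a]=F_2F_1F_2\,h(a+1)-F_2^2F_1\,h(a)$ and $\hat F_3[a+1]F_2=F_1F_2^2\,h(a)-F_2F_1F_2\,h(a-1)$, so the assertion reduces to
\[
F_2F_1F_2\bigl(h(a+1)+h(a-1)\bigr)=\bigl(F_1F_2^2+F_2^2F_1\bigr)h(a).
\]
This is precisely where two independent ingredients must meet: on the Cartan side the three-term identity $h(a+1)+h(a-1)=(q+q^{-1})h(a)$, a Chebyshev-type recursion in the variable $a$, and on the nilpotent side the quantum Serre relation $F_1F_2^2+F_2^2F_1=(q+q^{-1})F_2F_1F_2$; both sides equal $(q+q^{-1})F_2F_1F_2\,h(a)$. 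Every other identity is linear bookkeeping with the commutation relations, whereas here the $q$-Serre relation is essential, which is what makes this the crux.

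For \eqref{lem:relinUq-iii} and \eqref{lem:relinUq-xi} I would commute $E_1$ to the right through each $F_1,F_2$ using $[E_1,F_1]=\frac{K_1-K_1^{-1}}{q-q^{-1}}$ and $[E_1,F_2]=0$. The terms in which $E_1$ passes untouched reassemble (after applying $E_1h(b)=h(b+1)E_1$) into $\hat F_3[a+1]E_1$, respectively $F_3E_1$, while the leftover commutator terms produce a single residue. Moving the resulting $K_1^{\pm1}$ past $F_2$ via $K_1F_2=qF_2K_1$ and simplifying collapses this residue: in \eqref{lem:relinUq-iii} the difference $\frac{K_1-K_1^{-1}}{q-q^{-1}}F_2\,h(a+1)-F_2\frac{K_1-K_1^{-1}}{q-q^{-1}}h(a)$ telescopes to $F_2\,\frac{q^{a+1}K_1K_2-q^{-a-1}(K_1K_2)^{-1}}{q-q^{-1}}$, and in \eqref{lem:relinUq-xi} the analogous residue reduces to $F_2$ times a power of $K_1$, namely the right-hand side of \eqref{lem:relinUq-xi}.

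Finally, \eqref{lem:relinUq-vii} is obtained by applying the antilinear anti-automorphism $\ast$. Since $h(b)^\ast=h(b)$, $F_i^\ast=E_iK_i^{-1}$, and $\ast$ reverses products, moving the Cartan factors to the right through the $E_j$'s with $K_iE_j=q^{(\alpha_i,\alpha_j)}E_jK_i$ gives $F_2^\ast F_1^\ast=qE_2E_1(K_1K_2)^{-1}$ and $F_1^\ast F_2^\ast=qE_1E_2(K_1K_2)^{-1}$. Assembling $F_3^\ast=F_2^\ast F_1^\ast-qF_1^\ast F_2^\ast$ yields $F_3^\ast=q E_3 (K_1K_2)^{-1}$, and carrying along the self-adjoint coefficients $h(a+1),h(a)$ gives $(\hat F_3[a])^\ast=q\,\hat E_3[a]\,(K_1K_2)^{-1}$. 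The only care needed here is tracking the scalar $q$ produced by each transposition and the placement of $(K_1K_2)^{-1}$ on the right.
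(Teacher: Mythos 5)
Your proposal follows the same route the paper intends (the paper dismisses the whole lemma as a ``straightforward verification''), and for the first five identities your computations are correct: in particular the reduction of $F_2\hat F_3[a]=\hat F_3[a+1]F_2$ to the pair of facts $h(a+1)+h(a-1)=(q+q^{-1})h(a)$ and $F_1F_2^2+F_2^2F_1=(q+q^{-1})F_2F_1F_2$ is exactly the right mechanism, and the residue computation for $E_1\hat F_3[a]$ and the $\ast$-computation both check out.

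The one item you should not wave through is the last one, $E_1F_3=F_3E_1+F_2K_1$. Carrying out the commutation you describe actually gives
\begin{equation*}
E_1F_3=F_3E_1+\frac{K_1-K_1^{-1}}{q-q^{-1}}\,F_2-q\,F_2\,\frac{K_1-K_1^{-1}}{q-q^{-1}}
=F_3E_1+F_2\,\frac{(qK_1-q^{-1}K_1^{-1})-(qK_1-qK_1^{-1})}{q-q^{-1}}=F_3E_1+F_2K_1^{-1},
\end{equation*}
using $K_1F_2=qF_2K_1$. A check in the vector representation, with $E_1=e_{12}$, $F_1=e_{21}$, $F_2=e_{32}$, $K_1=\diag(q,q^{-1},1)$, confirms this: there $E_1F_3=0=F_3E_1+F_2K_1^{-1}$, whereas $F_3E_1+F_2K_1=(q^{-1}-q)e_{32}\neq 0$. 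The same conclusion follows independently by combining the (correct) identities $E_1\hat F_3[a]=\hat F_3[a+1]E_1+F_2\frac{q^{a+1}K_1K_2-q^{-a-1}(K_1K_2)^{-1}}{q-q^{-1}}$ and $\hat F_3[a]=F_3\,h(a+1)+q^{a+1}F_2F_1K_2$. So the residue is $F_2K_1^{-1}$, not $F_2K_1$: the printed statement carries a typo, and your assertion that the leftover term ``reduces to the right-hand side'' silently endorses the wrong power of $K_1$. (The slip is harmless at the only place the identity is used, in the appendix, since that term is killed when $F_2^\ast$ is moved onto $\hat F_3^{\,l}v_\lambda$; but your write-up should either derive $K_1^{-1}$ explicitly or flag the discrepancy.)
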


\begin{proof}
Straightforward verifications using \eqref{eq:uqsl3-relations} and  \eqref{eq:uqsl3-star}.
\end{proof}

\begin{cor}\label{cor:lem:relinUq-iii} For $l\in \N$ and $a\in \R$ we have
\[E_1\bigl( \hat{F}_3[a]\bigr)^l =
\bigl(\hat{F}_3[a+1]\bigr)^lE_1 +
\frac{q^l-q^{-l}}{q-q^{-1}}
F_2\bigl( \hat{F}_3[a]\bigr)^{l-1} \frac{(q^{a+2-l}K_1K_2-q^{-a-2+l}(K_1K_2)^{-1})}{(q-q^{-1})}
\]
\end{cor}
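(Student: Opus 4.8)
The plan is to prove the identity by induction on $l$, with the single-factor commutation relation of Lemma \ref{lem:relinUqa}\eqref{lem:relinUq-iii} serving as the engine. The base case $l=1$ is exactly that relation, since $\frac{q-q^{-1}}{q-q^{-1}}=1$ and $\bigl(\hat{F}_3[a]\bigr)^0=1$, so the displayed formula reduces to $E_1\hat{F}_3[a] = \hat{F}_3[a+1]E_1 + F_2\frac{q^{a+1}K_1K_2-q^{-a-1}(K_1K_2)^{-1}}{q-q^{-1}}$.

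For the inductive step I would assume the formula for $l-1$, write $\bigl(\hat{F}_3[a]\bigr)^l = \bigl(\hat{F}_3[a]\bigr)^{l-1}\hat{F}_3[a]$, and first push $E_1$ through the leading $l-1$ factors by the inductive hypothesis. This yields two pieces, each multiplied on the right by the remaining $\hat{F}_3[a]$: a principal piece $\bigl(\hat{F}_3[a+1]\bigr)^{l-1}E_1\hat{F}_3[a]$, and a Cartan piece carrying the factor $\frac{q^{l-1}-q^{-(l-1)}}{q-q^{-1}}F_2\bigl(\hat{F}_3[a]\bigr)^{l-2}\frac{q^{a+3-l}K_1K_2-q^{-a-3+l}(K_1K_2)^{-1}}{q-q^{-1}}$. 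In the principal piece I apply Lemma \ref{lem:relinUqa}\eqref{lem:relinUq-iii} once more to $E_1\hat{F}_3[a]$, producing $\bigl(\hat{F}_3[a+1]\bigr)^l E_1$ — the desired leading term — together with an extra contribution $\bigl(\hat{F}_3[a+1]\bigr)^{l-1}F_2\frac{q^{a+1}K_1K_2-q^{-a-1}(K_1K_2)^{-1}}{q-q^{-1}}$.

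The remaining work is to normalise the two leftover contributions to the common shape of the right-hand side. Using $F_2\hat{F}_3[a]=\hat{F}_3[a+1]F_2$ (Lemma \ref{lem:relinUqa}\eqref{lem:relinUq-ii}) iteratively I rewrite $\bigl(\hat{F}_3[a+1]\bigr)^{l-1}F_2 = F_2\bigl(\hat{F}_3[a]\bigr)^{l-1}$, so the extra principal contribution becomes $F_2\bigl(\hat{F}_3[a]\bigr)^{l-1}\frac{q^{a+1}K_1K_2-q^{-a-1}(K_1K_2)^{-1}}{q-q^{-1}}$. In the Cartan piece I move the factor $K_1K_2^{\pm 1}$ past the trailing $\hat{F}_3[a]$ using $K_i\hat{F}_3[a]=q^{-1}\hat{F}_3[a]K_i$ (Lemma \ref{lem:relinUq}\eqref{lem:relinUq-v}), that is $K_1K_2\,\hat{F}_3[a]=q^{-2}\hat{F}_3[a]\,K_1K_2$ and $(K_1K_2)^{-1}\hat{F}_3[a]=q^{2}\hat{F}_3[a](K_1K_2)^{-1}$; this simultaneously restores the power $\bigl(\hat{F}_3[a]\bigr)^{l-1}$ and shifts the exponents, turning the Cartan factor into $q^{a+1-l}K_1K_2-q^{-a-1+l}(K_1K_2)^{-1}$.

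Finally I add the two contributions, both now of the form $F_2\bigl(\hat{F}_3[a]\bigr)^{l-1}(\cdots)$. Extracting a common $\frac{1}{q-q^{-1}}$, the coefficient of $K_1K_2$ is $q^{a+1}+\frac{q^{l-1}-q^{-(l-1)}}{q-q^{-1}}q^{a+1-l}$, which collapses to $\frac{q^{a+2-l}(q^l-q^{-l})}{q-q^{-1}}$, and symmetrically the coefficient of $(K_1K_2)^{-1}$ collapses to $-\frac{q^{-a-2+l}(q^l-q^{-l})}{q-q^{-1}}$; together these reproduce exactly the factor $\frac{q^l-q^{-l}}{q-q^{-1}}F_2\bigl(\hat{F}_3[a]\bigr)^{l-1}\frac{q^{a+2-l}K_1K_2-q^{-a-2+l}(K_1K_2)^{-1}}{q-q^{-1}}$ on the right-hand side. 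The only non-formal point, and the step I would double-check carefully, is this last simplification: the two geometric-type terms must telescope to give the single factor $q^l-q^{-l}$ and the shifted exponent $a+2-l$. Beyond that the argument is purely an induction driven by the three commutation relations cited above.
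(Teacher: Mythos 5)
Your proof is correct and follows essentially the same route as the paper: induction on $l$ driven by Lemma \ref{lem:relinUqa}\eqref{lem:relinUq-iii} for the base case and the inductive step, with Lemma \ref{lem:relinUqa}\eqref{lem:relinUq-ii} and the commutation $K_i\hat F_3[a]=q^{-1}\hat F_3[a]K_i$ used to bring the leftover terms to a common form; the final telescoping of the coefficients of $K_1K_2$ and $(K_1K_2)^{-1}$ checks out, e.g. $q^{a+1}(q-q^{-1})+q^{a+1-l}(q^{l-1}-q^{1-l})=q^{a+2}-q^{a+2-2l}=q^{a+2-l}(q^{l}-q^{-l})$.
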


\begin{proof} By induction on $l$ using Lemma \ref{lem:relinUqa}\eqref{lem:relinUq-iii}
and \eqref{lem:relinUq-ii}.
\end{proof}

\begin{proof}[Proof of  Theorem \ref{thm:basisVla}]

By the PBW-theorem, $F_2^k \hat{F}_3^l F_1^m\, v_\la$ for $k,l,m\in \N$ spans
$V_\la$. By Proposition \ref{prop:actiongeneratorsinbasis}
\begin{equation}\label{eq:actionKibasis}
\begin{split}
K_1  F_2^k \hat{F}_3^l F_1^m\, v_\la &=
q^{\la_1+k-l-2m} F_2^k \hat{F}_3^l F_1^m\, v_\la, \\
K_2  F_2^k \hat{F}_3^l F_1^m\, v_\la &=
q^{\la_2-2k-l+m} F_2^k \hat{F}_3^l F_1^m\, v_\la.
\end{split}
\end{equation}
Since $K_i$, $i=1,2$, are self-adjoint, we find that $\langle F_2^k \hat{F}_3^l F_1^m\, v_\la, F_2^{k'} \hat{F}_3^{l'} F_1^{m'}\, v_\la \rangle = 0$ in case $k-l-2m\not= k'-l'-2m'$ or $-2k-l+m\not= -2k'-l'+m'$.  For $k'>k$ we find
\begin{multline}\label{eq:pfthmbasisVla-1}
\langle F_2^k \hat{F}_3^l F_1^m\, v_\la, F_2^{k'} \hat{F}_3^{l'} F_1^{m'}\, v_\la \rangle =
\langle (E_2K_2^{-1})^{k'}F_2^k \hat{F}_3^l F_1^m\, v_\la,  \hat{F}_3^{l'} F_1^{m'}\, v_\la \rangle \\
= q^{k'(k'+1)} \langle K_2^{-k'} E_2^{k'}F_2^k \hat{F}_3^l F_1^m\, v_\la,  \hat{F}_3^{l'} F_1^{m'}\, v_\la \rangle  = 0,
\end{multline}
since  $E_i^{k'}F_i^k \in \Uq\, E_i^{k'-k}$ for $k,k'\in \N$, $k'>k$, using also
Lemma \ref{lem:relinUq}\eqref{lem:relinUq-iv} for $a=0$,
\eqref{eq:uqsl3-relations} and
\eqref{eq:defrelhwVla}.
Because of the symmetry between $k$ and $k'$, we see that the inner product \eqref{eq:pfthmbasisVla-1} is $0$ for $k\not= k'$.
With the above remark, we find
\begin{equation*}
\langle F_2^k \hat{F}_3^l F_1^m\, v_\la, F_2^{k'} \hat{F}_3^{l'} F_1^{m'}\, v_\la \rangle = 0
\end{equation*}
in case $k\not= k'$ or  $l\not=l'$ or $m\not= m'$.

So it suffices to calculate the norm of the vectors, and see that this is non-zero
precisely for the range mentioned.
First, using the case $k=k'$ of the first part of \eqref{eq:pfthmbasisVla-1}
and that $K_2$ acts on $E_2^{k}F_2^k \hat{F}_3^l F_1^m\, v_\la$
by the scalar $q^{\la_2-l+m}$, we find
\begin{gather*}
\langle F_2^k \hat{F}_3^l F_1^m\, v_\la, F_2^{k} \hat{F}_3^{l} F_1^{m}\, v_\la \rangle =
q^{k(k+1)-k(\la_2-l+m)} \langle E_2^{k}F_2^k \hat{F}_3^l F_1^m\, v_\la,  \hat{F}_3^{l} F_1^{m}\, v_\la \rangle.
\end{gather*}
Now use Lemma \ref{lem:EkFkmodUqEk}\eqref{lem:EkFkmodUqEk-ii} for $i=2$ and next
the commutation relations of Lemma \ref{lem:relinUq}\eqref{lem:relinUq-iv}
and \eqref{eq:uqsl3-relations} to see that the $\Uq E_2$-part of
Lemma \ref{lem:EkFkmodUqEk}\eqref{lem:EkFkmodUqEk-ii} gives zero contribution.
Because of the action of $K_2$ being diagonal, we find
\begin{gather*}
\langle F_2^k \hat{F}_3^l F_1^m\, v_\la, F_2^{k} \hat{F}_3^{l} F_1^{m}\, v_\la \rangle =
\frac{(q^2;q^2)_k}{(1-q^2)^{2k}} (q^{-2(\la_2-l+m)};q^2)_k
(-1)^k q^{3k}
\langle  \hat{F}_3^l F_1^m\, v_\la,  \hat{F}_3^{l} F_1^{m}\, v_\la \rangle
\end{gather*}
Next we write
\begin{gather*}
\langle  \hat{F}_3^l F_1^m\, v_\la,  \hat{F}_3^{l} F_1^{m}\, v_\la \rangle =
\langle  \hat{F}_3^l F_1^m\, v_\la,  F_1^{m} \hat{F}_3^{l} \, v_\la \rangle =
q^{m(m+1)} q^{-m(\la_1-l)}
\langle E_1^{m}  \hat{F}_3^l F_1^m\, v_\la,  \hat{F}_3^{l} \, v_\la \rangle
\end{gather*}
using Lemma \ref{lem:relinUq}\eqref{lem:relinUq-i}, the $\ast$-structure \eqref{eq:uqsl3-star},
\eqref{eq:uqsl3-relations} and \eqref{eq:actionKibasis}.
Following Mudrov \cite[\S 8]{Mudr} we replace $\hat{F}_3^l$ on the left by $F_3^l$.
First use Lemma \ref{lem:relinUqa}\eqref{lem:relinUq-x}
\begin{gather*}
\langle E_1^{m}  \hat{F}_3^l F_1^m\, v_\la,  \hat{F}_3^{l} \, v_\la \rangle =
\frac{q^{2+\la_2-l+m}-q^{-2-\la_2+l-m}}{q-q^{-1}}
\langle E_1^{m} F_3 \hat{F}_3^{l-1} F_1^m\, v_\la,  \hat{F}_3^{l} \, v_\la \rangle \\
+ q^{2+\la_2-l+m}
\langle E_1^{m} F_2F_1 \hat{F}_3^{l-1} F_1^m\, v_\la,  \hat{F}_3^{l} \, v_\la \rangle
\end{gather*}
In the second term, move $F_2$ to the left using \eqref{eq:uqsl3-relations}, and
then the other side so that is essentially an $E_2$ which we can move through,
by Lemma \ref{lem:relinUq}\eqref{lem:relinUq-iv}, to the highest weight vector, and hence gives
zero.
This we can repeat, since $F_2$ also $q$-commutes with $F_3$ by
Lemma \ref{lem:relinUqa}\eqref{lem:relinUq-vi}. This yields
\begin{gather*}
\langle E_1^{m}  \hat{F}_3^l F_1^m\, v_\la,  \hat{F}_3^{l} \, v_\la \rangle =
\frac{(-1)^l q^{l(2+\la_2+m)l} q^{-\frac12 l(l-1)}}{(1-q^2)^l}
(q^{-\la_2-2-m};q^2)_l
\langle E_1^{m} F_3^{l} F_1^m\, v_\la,  \hat{F}_3^{l} \, v_\la \rangle.
\end{gather*}
Using Lemma \ref{lem:relinUqa}\eqref{lem:relinUq-xi}, and moving $F_2$ to the other
side, where $F_2^\ast$ kills $\hat{F}_3^{l} \, v_\la$, we see
\begin{gather*}
\langle E_1^{m} F_3^{l} F_1^m\, v_\la,  \hat{F}_3^{l} \, v_\la \rangle =
(-1)^mq^{-m(m-2)+m\la_1} \frac{(q^2;q^2)_m}{(1-q^2)^{2m}} (q^{-2\la_1};q^2)_m
\langle F_3^{l} \, v_\la,  \hat{F}_3^{l} \, v_\la \rangle
\end{gather*}
by Lemma \ref{lem:EkFkmodUqEk}\eqref{lem:EkFkmodUqEk-ii}.
Assume $l\geq 1$, so it remains to calculate
\begin{multline*}
\langle  {F}_3^l \, v_\la,  \hat{F}_3^{l} \, v_\la \rangle =
\langle {F}_3^{l-1} \, v_\la,  ({F}_3)^\ast \hat{F}_3^{l} \, v_\la \rangle
 =  q^{1-(\la_1+\la_2-2l)} \langle
{F}_3^{l-1} \, v_\la,  \left( E_2E_1 - E_1E_2
\right) \hat{F}_3^{l} \, v_\la \rangle  \\
=q^{1-(\la_1+\la_2-2l)} \langle
{F}_3^{l-1} \, v_\la,  E_2E_1
 \hat{F}_3^{l} \, v_\la \rangle
\end{multline*}
where we use Lemma \ref{lem:relinUqa}\eqref{lem:relinUq-vii}, the diagonal action
of $K_i$ and the fact that the action of $E_1E_2$ is zero by
Lemma \ref{lem:relinUq}\eqref{lem:relinUq-iv} and \eqref{eq:defrelhwVla}.
By Corollary \ref{cor:lem:relinUq-iii} for $a=0$ and \eqref{eq:defrelhwVla} we find
\begin{gather*}
E_1 \hat{F}_3^l \, v_\la = \frac{q^l-q^{-l}}{q-q^{-1}}
\frac{q^{2+\la_1+\la_2-l}-q^{-2-\la_1-\la_2+l}}{q-q^{-1}} F_2 \hat{F}_3^{l-1} \, v_\la
\end{gather*}
and next applying $E_2$, using \eqref{eq:uqsl3-relations}, \eqref{eq:defrelhwVla}
and Lemma \ref{lem:relinUq}\eqref{lem:relinUq-iv} we find
\begin{gather*}
E_2E_1 \hat{F}_3^l \, v_\la = \frac{q^l-q^{-l}}{q-q^{-1}}
\frac{q^{2+\la_1+\la_2-l}-q^{-2-\la_1-\la_2+l}}{q-q^{-1}}
\frac{q^{\la_2-l+1}-q^{-\la_2+l-1}}{q-q^{-1}}
\hat{F}_3^{l-1} \, v_\la,
\end{gather*}
so that
\begin{multline*}
\langle  {F}_3^l \, v_\la,  \hat{F}_3^{l} \, v_\la \rangle =
q^{1-(\la_1+\la_2-2l)} \frac{q^l-q^{-l}}{q-q^{-1}} \\
\times \frac{q^{2+\la_1+\la_2-l}-q^{-2-\la_1-\la_2+l}}{q-q^{-1}}
\frac{q^{\la_2-l+1}-q^{-\la_2+l-1}}{q-q^{-1}}
\langle  {F}_3^{l-1} \, v_\la,  \hat{F}_3^{l-1} \, v_\la \rangle.
\end{multline*}
Iterating, since we normalize $\langle v_\la,v_\la\rangle =1$,  we
find
\begin{gather*}
\langle {F}_3^l \, v_\la,  \hat{F}_3^{l} \, v_\la \rangle =
q^{l(\la_2+7)} q^{-\frac12 l(l+1)}
\frac{(q^2;q^2)_l}{(1-q^2)^{3l}} (q^{-2\la_2};q^2)_l
(q^{-2(\la_1+\la_2+1)};q^2)_l.
\end{gather*}
Note that this expression is positive for $0\leq l \leq \la_2$
and equals zero for $l>\la_2$. Collecting all the intermediate results gives the explicit expression for
the norm of the basis elements.
\end{proof}

\end{appendix}

\end{document}